\theoremstyle{plain}
\newtheorem{thm}{Theorem}[section]
\newtheorem{cor}[thm]{Corollary}
\newtheorem{lemma}[thm]{Lemma}
\theoremstyle{definition}
\newtheorem*{adef}{Definition}
\theoremstyle{remark}
\newcommand{\s}{z}
\DeclareMathOperator{\lc}{lc}
\DeclareMathOperator{\rlc}{rlc}
\DeclareMathOperator{\ac}{lc}
\DeclareMathOperator{\rac}{rlc}
\DeclareMathOperator{\val}{val}
\newcommand{\LL}{\mathcal{L}}
\newcommand{\eps}{\varepsilon}
\renewcommand{\P}{\ensuremath\mathsf{P}}
\newcommand{\NP}{\ensuremath\mathsf{NP}}
\newcommand{\APX}{\ensuremath\mathsf{APX}}
\newcommand{\size}[1]{\textrm{size}(#1)}
\newcommand{\ints}{\mathbb{Z}}
\newcommand{\nats}{\mathbb{N}}
\newcommand{\rats}{\mathbb{Q}}
\DeclareMathOperator{\opt}{opt}
\title{The complexity of solution-free sets of integers}
\author{
Keith J. Edwards   \\
Computing   \\
University of Dundee   \\
Dundee, DD1 4HN   \\
United Kingdom \\
\texttt{kjedwards@dundee.ac.uk}  \\
\ \\
Steven D. Noble \\
Department of Economics, Mathematics and Statistics \\
Birkbeck, University of London \\
Malet Street \\
London, WC1E 7HX \\
United Kingdom \\
\texttt{s.noble@bbk.ac.uk}  \\
}
\begin{document}

\maketitle

\begin{abstract}
Given a linear equation $\LL$, a set $A $ of integers is $\LL$-free if
$A$ does not contain any non-trivial solutions to $\LL$. Meeks and
Treglown~\cite{meeks-treglown} showed that for certain kinds of linear
equations, it is $\NP$-complete to decide if a given set of integers
contains a solution-free subset of a given size. Also, for equations
involving three variables, they showed that the problem of determining
the size of the largest solution-free subset is $\APX$-hard, and that for
two such equations (representing sum-free and progression-free sets),
the problem of deciding if there is a solution-free subset with at least
a specified proportion of the elements is also $\NP$-complete.

We answer a number of questions posed by Meeks and Treglown,
by extending the results above to all linear equations, and showing
that the problems remain hard for sets of integers whose elements are
polynomially bounded in the size of the set. For most of these
results, the integers can all be positive as long as the coefficients
do not all have the same sign.

We also consider the problem of counting the number of solution-free
subsets of a given set, and show that this problem is $\#P$-complete
for any linear equation in at least three variables.
\end{abstract}
\newpage
\section{Introduction}
There has been much study of sum-free or progression-free sets of
integers, that is, sets of integers containing no solution to the
equations $x + y = z$ or $x + z = 2y$ respectively, or, more
generally, sets containing no solution to some other linear equation.

In a recent paper, Meeks and Treglown~\cite{meeks-treglown} initiated
the study of the computational complexity aspects of these problems,
by considering several computational problems concerning solution-free
sets, or, more precisely, solution-free subsets of a given set of
integers.

\subsection{Solution-free sets} \label{sec:background}
We start with a brief account of the notation and terminology that
will be used throughout the paper.

Consider a fixed linear equation
$\LL$ of the form
\begin{align}\label{general-linear-eqn}
c_1x_1+\dots +c_\ell x_\ell =K,
\end{align}
where $c_1,\dots, c_\ell,K \in \mathbb{Z}$.
The equation $\LL$ is \emph{homogeneous} if $K=0$ and inhomogeneous
otherwise. It is \emph{translation-invariant} if
\[
\sum_{i=1}^{\ell} c_i=0 .
\]
We will be interested in solutions to such an equation in some set of
integers. In general, a solution to $\LL$ in a set
$A \subseteq \mathbb{Z}$
is a sequence $(x_1,\ldots, x_{\ell}) \in A^{\ell}$
which satisfies the equation.

However, some equations have trivial solutions which must be excluded
from consideration in order to obtain sensible questions.
Suppose that $\LL$ is homogeneous and translation-invariant, that is, it is of the form
\begin{align}\label{hom-trans-inv-eqn}
c_1x_1+\ldots +c_\ell x_\ell = 0,
\end{align}
where $c_1+\ldots+c_\ell = 0$.
Then $(x,\dots, x)$ is
a trivial solution of~\eqref{hom-trans-inv-eqn} for any $x$ (and so no
non-empty set of integers can be solution-free for $\LL$).
More generally, a solution $(x_1, \dots , x_{\ell})$ to $\LL$ is said to be
\emph{trivial} if there exists a partition $P_1, \dots ,P_k$ of
$\{1, \ldots, \ell\}$ so that:
\begin{itemize}
\item[(i)] $x_i=x_j$ for every $i,j$ in the same partition class $P_r$;
\item[(ii)] For each $r \in \{1, \ldots ,k \}$, $\sum_{i \in P_r} c_i=0$.
\end{itemize}
For any linear equation $\LL$, a set $A $ of integers is
\emph{$\LL$-free} if $A$ does not contain
any non-trivial solutions to $\LL$.

\section{Computational Problems} \label{sec:comp-problems}
Meeks and Treglown~\cite{meeks-treglown} considered several
computational problems concerning $\LL$-free sets for some fixed linear
equation $\LL$ (they restricted attention to homogeneous equations).

The main problem considered was the following:
\begin{framed}
\noindent $\LL$-\textsc{Free Subset}\newline
\textit{Input:} A finite set $A \subseteq \mathbb{Z}$ and $k \in
\mathbb{N}$.\newline
\textit{Question:} Does there exist an $\LL$-free subset
$A' \subseteq A$ such that $|A'|=k$?
\end{framed}

They show that the problem is $\NP$-complete in some cases:
\begin{thm}[Meeks \& Treglown \cite{meeks-treglown}]\label{npthm}
Let $\LL$ be a linear equation of the form $a_1x_1+\dots+a_\ell x_\ell
= b y$ where each $a_i \in \mathbb{N}$ and $b \in \mathbb{N}$ are fixed
and $\ell\geq 2$.
Then $\LL$-\textsc{Free Subset} is $\NP$-complete.
\end{thm}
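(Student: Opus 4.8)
The plan is to prove membership in $\NP$ first, and then $\NP$-hardness by reduction from a suitable graph problem such as \textsc{Independent Set}.

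Membership in $\NP$ is immediate. A subset $A' \subseteq A$ with $|A'| = k$ serves as a certificate. Since $\LL$ is fixed, the number of variables $\ell + 1$ is a constant, so there are at most $|A'|^{\ell+1}$ candidate tuples $(x_1, \dots, x_\ell, y) \in (A')^{\ell+1}$ to inspect. For each tuple satisfying the equation I would test triviality by running over the constantly-many partitions of $\{1, \dots, \ell+1\}$ and checking conditions (i) and (ii); the whole verification is polynomial in the size of $A$.

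For hardness I would reduce from \textsc{Independent Set}. Given a graph $G = (V, E)$ with $V = \{1, \dots, n\}$, I would build a finite set $A \subseteq \mathbb{N}$ consisting of one \emph{vertex element} $g_i$ per vertex $i$ together with a controlled collection of \emph{auxiliary elements} (possibly one group per edge), and set a target size $k$ depending on $n$ and the desired independent-set size. The design goal is that every edge $\{i,j\} \in E$ gives rise to exactly one non-trivial solution of $\LL$, in which two of the left-hand variables take the values $g_i, g_j$ while the remaining $\ell - 1$ variables take prescribed auxiliary values, and that no other non-trivial solution occurs. To realise this I would choose the auxiliary values and the $g_i$ with widely separated magnitudes, for instance along a fast-growing scale, so that the positivity of all coefficients together with the gaps forces any solution to have the intended shape: exactly two vertex elements and the prescribed auxiliaries. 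Under such a construction, taking $\{g_i : i \in S\}$ together with all auxiliary elements is $\LL$-free precisely when $S$ is an independent set of $G$, which yields the completeness direction of the reduction directly.

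The two steps I expect to be the main obstacles both lie on the ``control of solutions'' side. First, I must verify that the only non-trivial solutions of $\LL$ inside $A$ are the intended edge-solutions; ruling out accidental solutions — those mixing several vertex elements, those using auxiliary elements in unintended positions, and those arising from the triviality partition — is the delicate part, and is exactly where the magnitude and scaling choices must be pinned down. This may require a short case split according to whether $a_1 + \dots + a_\ell$ equals $b$ (the balanced case, where constant tuples $(v,\dots,v)$ are trivial solutions and so harmless) or not. Second, for soundness I must argue that any $\LL$-free subset of size $k$ can be normalised, without decreasing its size, to one containing every auxiliary element, so that it genuinely has the form $\{g_i : i \in S\} \cup \{\text{auxiliaries}\}$ with $S$ independent. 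This normalisation — driven by the choice of $k$ and by ensuring that discarding an auxiliary element is never advantageous — is what forces the exact correspondence between maximum independent sets of $G$ and maximum $\LL$-free subsets of $A$, and it is the genuine technical heart of the reduction since a single $(\ell+1)$-variable solution must be made to encode a purely pairwise edge constraint.
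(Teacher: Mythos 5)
Your skeleton is essentially the one this paper uses for its generalisation of this result (the statement itself is quoted from Meeks and Treglown; the paper reproves and strengthens it in Section~3): membership in $\NP$ by exhaustive checking of the constantly many $(\ell+1)$-tuples, and hardness by a reduction from \textsc{Independent Set} on graphs in which each vertex gets one label, each edge gets a group of $\ell-1$ auxiliary labels (one of which is determined by the equation from the others), every edge yields an intended solution using its two endpoint labels and its auxiliaries, and no other non-trivial solutions exist. Your normalisation step is exactly the paper's Corollary~\ref{ind-set-two}: since every non-trivial solution through an auxiliary label of an edge $e=\{i,j\}$ must use both $g_i$ and $g_j$, a missing auxiliary can either be added outright or swapped for one endpoint label, so one may assume all auxiliaries are present, after which $\LL$-free subsets of the target size correspond exactly to independent sets of the desired size.

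The genuine gap is the step you flag but do not carry out: the claim that labels ``along a fast-growing scale'' admit no accidental solutions is the entire technical content of the theorem, and it is not obvious as stated. The subtlety is that the dependent auxiliary label of an edge is not free to be placed on the scale: it equals $(a_1 g_i + a_2 g_j + \cdots)/b$, so its magnitude is tied to the other labels in its group, and accidental solutions mixing dependent labels of different edges (in particular edges sharing an endpoint) are precisely what must be excluded; doing this for arbitrary fixed positive coefficients requires a genuine argument (in essence carry-free base-$B$ arithmetic and a comparison of coefficient-weighted digit patterns), not just growth. The paper's own mechanism (Lemma~\ref{construct-set}) replaces scaling altogether: it enumerates every linear form that would have to vanish to produce an unwanted solution, substitutes out the dependent labels, proves by a case analysis that none of the reduced forms is identically zero, and then chooses the labels greedily so as to avoid the polynomially many forbidden values. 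That route costs the long case analysis but buys polynomially bounded labels --- hence strong $\NP$-completeness, which the paper needs for its later results --- whereas your scale would give exponentially large (though still polynomial-bit-length) integers, which does suffice for the plain $\NP$-completeness claimed here, provided the missing no-accidental-solutions argument is actually supplied.
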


They also consider the problem of finding the size of the largest
$\LL$-free subset of a set:
\begin{framed}
\noindent \textsc{Maximum $\LL$-Free Subset}\newline
\textit{Input:} A finite set $A \subseteq \mathbb{Z}$.\newline
\textit{Question:} What is the cardinality of the largest $\LL$-free subset $A' \subseteq A$?
\end{framed}

This problem is shown to be $\APX$-hard in some limited cases with three
variables:
\begin{thm}[Meeks \& Treglown \cite{meeks-treglown}]
Let $\LL$ be a linear equation of the form $a_1x_1 + a_2x_2 = by$,
where $a_1,a_2,b \in \mathbb{N}$ are fixed. Then
\textsc{Maximum $\LL$-Free Subset} is $\APX$-hard.
\end{thm}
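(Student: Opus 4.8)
The plan is to prove $\APX$-hardness via an L-reduction from \textsc{Maximum Independent Set} on cubic graphs, which is known to be $\APX$-hard. Fix the coefficients $a_1,a_2,b\in\nats$. Given a cubic graph $G=(V,E)$, I would build a finite set $A$ of positive integers with two kinds of elements: a \emph{vertex element} $p_v$ for each $v\in V$ and an \emph{edge element} $q_e$ for each $e\in E$. Writing $e=\{u,v\}$ with a fixed orientation, set $p_v:=b(M+\delta_v)$ for a common huge base $M$ and suitably chosen small distinct positive integers $\delta_v$, and set $q_e:=(a_1p_u+a_2p_v)/b=(a_1+a_2)M+(a_1\delta_u+a_2\delta_v)$. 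By construction $\{p_u,p_v,q_e\}$ is a solution of $\LL$ for every edge $e$, and its three values are distinct, so the solution is non-trivial. The intended correspondence is that the $\LL$-free subsets of $A$ are exactly those avoiding all of these edge-triples.

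The heart of the argument, and the step I expect to be the main obstacle, is to choose $M$ and the $\delta_v$ so that the edge-triples are the \emph{only} non-trivial solutions of $\LL$ inside $A$. I would analyse a hypothetical solution $a_1X+a_2Y=bZ$ with $X,Y,Z\in A$ in two layers. First, a leading-order (size) argument: each element equals $M$ times a ``type weight'' ($b$ for a vertex element, $a_1+a_2$ for an edge element) plus a bounded correction, so dividing by $M$ forces the weights $\xi,\eta,\zeta$ of $X,Y,Z$ to satisfy $a_1\xi+a_2\eta=b\zeta$. Enumerating the constantly many type patterns, each unintended pattern either fails to balance at leading order or, in the finitely many coefficient-specific cases where it does balance, is eliminated at the low-order level. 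Secondly, that low-order argument: matching the bounded corrections of a surviving ``vertex, vertex, edge'' pattern forces $a_1\delta_X+a_2\delta_Y=a_1\delta_u+a_2\delta_v$, so choosing the $\delta_v$ in sufficiently general position (a Sidon-type condition met by a greedy choice, since only finitely many linear relations must be avoided) pins down $X=p_u$, $Y=p_v$ and hence $\{u,v\}\in E$. The delicate case is the translation-invariant one $a_1+a_2=b$, where the all-vertex pattern also balances at leading order; this merely reflects the trivial solutions, and the same general-position choice of the $\delta_v$ kills every non-trivial all-vertex relation. Keeping $M$ much larger than all corrections, with the $\delta_v$ positive and distinct, also guarantees the elements are distinct positive integers and that degenerate patterns (such as $X=Y$, or patterns using two edge elements) have no completion in $A$.

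With the solution structure pinned down, the combinatorial correspondence is clean. A subset of $A$ is $\LL$-free if and only if, viewing it as $S\subseteq V$ together with $T\subseteq E$, it contains no edge $e=\{u,v\}\in T$ with both endpoints in $S$. For fixed $S$ one may include every edge element $q_e$ with $e\not\subseteq S$, so
\[
\opt(A)=|E|+\max_{S\subseteq V}\bigl(|S|-|E(S)|\bigr),
\]
where $\opt(A)$ denotes the answer to \textsc{Maximum $\LL$-Free Subset} on $A$ and $E(S)$ is the set of edges with both ends in $S$. The key lemma is that $\max_S(|S|-|E(S)|)$ equals the independence number $\alpha(G)$: it is at least $\alpha(G)$ by taking a maximum independent set, and at most $\alpha(G)$ because deleting one endpoint of each edge of $G[S]$ leaves an independent set of size at least $|S|-|E(S)|$. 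Hence $\opt(A)=|E|+\alpha(G)$.

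Finally I would verify the L-reduction inequalities using bounded degree. Since $G$ is cubic, $|E|=\tfrac32|V|$ and $\alpha(G)\ge|V|/4$, so $\opt(A)=|E|+\alpha(G)\le c_1\,\alpha(G)$ for an absolute constant $c_1$. Conversely, any $\LL$-free subset realised by $(S,T)$ has value $\mathrm{val}=|S|+|T|\le|S|+|E|-|E(S)|$, and deleting one endpoint per edge of $G[S]$ yields an independent set $S^\ast$ with $|S^\ast|\ge|S|-|E(S)|\ge\mathrm{val}-|E|$, whence $\alpha(G)-|S^\ast|\le\opt(A)-\mathrm{val}$. Thus the map is an L-reduction (with $\beta=1$); it is computable in polynomial time and outputs positive integers, which is consistent since the equation $a_1x_1+a_2x_2-by=0$ has coefficients not all of the same sign. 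Therefore \textsc{Maximum $\LL$-Free Subset} is $\APX$-hard.
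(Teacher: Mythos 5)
Your overall route is essentially the paper's own (the paper does not reprove this cited theorem of Meeks and Treglown, but in Section~4 it proves the generalisation to every linear equation in at least three variables by the same scheme): reduce from maximum independent set on graphs of maximum degree~$3$, represent each vertex and each edge by an integer so that the edge triples $\{p_u,p_v,q_e\}$ are the only non-trivial solutions, conclude $\opt(A)=|E|+\alpha(G)$ via the delete-one-endpoint-per-edge argument, and finish with $|E|\le\tfrac32|V|$ and $\alpha(G)\ge|V|/4$. Your correspondence $\opt(A)=|E|+\max_{S}\bigl(|S|-|E(S)|\bigr)=|E|+\alpha(G)$ and the L-reduction arithmetic are correct; the main difference is that you use an explicit base-plus-perturbation encoding where the paper's Lemma~3.2 chooses all labels greedily subject to a family of linear non-degeneracy constraints.

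There is, however, a concrete gap in the step you yourself call the heart of the argument. You claim that patterns using two edge elements ``have no completion in $A$'' because $M$ dominates all corrections, and in the translation-invariant case you address only the all-vertex pattern. But when $a_1+a_2=b$ (e.g.\ $x_1+x_2=2y$), a vertex element has leading term $bM$ and an edge element has leading term $(a_1+a_2)M=bM$, so \emph{every} type pattern balances at leading order and the size of $M$ excludes nothing; even distinctness of the elements is no longer automatic from distinctness of the $\delta_v$. Hence all patterns involving two or three edge elements, such as $a_1q_{e_1}+a_2q_{e_2}=bq_{e_3}$, must be eliminated at the low-order level, and for that general position alone does not suffice: one must first verify that the induced relation among the $\delta$'s is never \emph{identically} zero unless the solution is trivial (all entries equal) --- for instance, that $a_1(a_1\delta_{u_1}+a_2\delta_{v_1})+a_2(a_1\delta_{u_2}+a_2\delta_{v_2})=b(a_1\delta_{u_3}+a_2\delta_{v_3})$ holding identically forces $e_1=e_2=e_3$. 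This identical-cancellation case analysis (all three values from one edge, two distinct edges sharing an endpoint, three distinct edges, triangles, \ldots) is precisely the bulk of the paper's proof of Lemma~3.2 in the three-variable case, and it is absent from your sketch, which carries it out only for the all-vertex and vertex--vertex--edge patterns. The gap is fillable --- the analysis does go through, since distinct edges always leave some vertex perturbation with non-zero total coefficient --- but as written your proof gives a false reason for excluding these patterns, and exactly in the translation-invariant case that reason fails.
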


Meeks and Treglown also consider the problem of determining if a set
has an $\LL$-free subset containing some fixed proportion of the
elements:
\begin{framed}
\noindent $\eps$-$\LL$-\textsc{Free Subset}\newline
%
%
%
%
\textit{Input:} A finite set $A \subseteq \mathbb{Z} \setminus \{0\}$.\newline
\textit{Question:} Does there exist an $\LL$-free subset $A' \subseteq
A$ such that $|A'|\geq \eps|A|$?
\end{framed}
This problem is shown to be $\NP$-complete for some values of $\eps$
for two kinds of equations, namely $x + y = z$ and $ax+by=cz$, where
$a,b,c > 0$ and $a + b = c$.

The authors raise a number of questions, including the following:
\begin{enumerate}
\item Can these results be proved for the case when the set $A$ has elements of
size polynomial in the size of $A$ (rather than exponential as is the
case with the results proved in~\cite{meeks-treglown})?
\item Can the results be generalised to other linear equations, in
particular equations such as $x+y=z+w$?
\end{enumerate}
We answer both these questions in the affirmative, by extending all
the above results to all linear equations (both homogeneous and
inhomogeneous) with at least
three variables, using only polynomially-sized integers.

We close by proving the hardness of the counting version of $\LL$-free subset.
\section{Representing a graph as a set}
In this section we will describe constructions which allow us to
represent a graph as a set of integers relative to a linear equation.
The constructions
are inspired by the hypergraph one used in~\cite{meeks-treglown},
but are different in a number of respects; in particular, we do not
use hypergraphs. For a graph with $\ell$ variables, instead of
associating $\ell-1$ variables with vertices and the remaining
(dependent) one
with an edge of an $\ell-1$-uniform hypergraph, we associate two
variables with vertices and the remaining $\ell-2$ with an edge, one
of which is ``dependent'' and the rest ``free''.
We will start with homogeneous equations.

We first show how to rearrange a homogeneous equation into a more convenient
form. Note that this only involves re-ordering the terms.
\begin{lemma} \label{make-standard}
Let $\LL$ be a linear equation
$c_1 x_1 + \cdots + c_{\ell} x_{\ell} = 0$, where $\ell \ge 3$
and the coefficients $c_i$ are all non-zero.
Then we can find an equivalent
equation $\LL'$:
$a_1 x_1 + a_2 x_2 + b_1 y_1 + \cdots + b_{\ell-3} y_{\ell-3} = b_0y_0$,
with the following properties:
\begin{enumerate}
\item $a_1 \le a_2 \le b_1 \le \ldots \le b_{\ell-3}$.
\item $b_0 > 0$ unless all of $c_1, \ldots , c_{\ell}$ have the same
sign, in which case $b_0 < 0$.
\item All of $a_1, a_2, b_1, \ldots , b_{\ell-3}$ are non-zero,
\item If $C = \{ a_1, a_2, b_1, \ldots , b_{\ell-3}\}$, then
$\sum_{x \in C, x < 0} (-x) < \sum_{x \in C, x > 0} x$, that is, the
negative coefficients on the left-hand side have smaller total size
than the positive coefficents on the left-hand side.
\item The last two coefficients on the left-hand side are positive
(these are $a_1,a_2$ if $\ell=3$, $a_2,b_1$ if $\ell=4$,
$b_{\ell-4},b_{\ell-3}$ if $\ell>4$).
\end{enumerate}
\end{lemma}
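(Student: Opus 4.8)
The plan is to exploit the three operations that preserve the solution set of a homogeneous equation, and hence produce an \emph{equivalent} equation: multiplying through by $-1$, choosing one variable to move to the right-hand side (this variable becomes $y_0$), and permuting the terms on the left. Properties~1 and~3 are essentially free: property~3 holds because none of the $c_i$ is zero and we never create or destroy terms, and property~1 is achieved at the very end simply by sorting the surviving left-hand coefficients into non-decreasing order and renaming them $a_1,a_2,b_1,\dots,b_{\ell-3}$. So the real content is choosing the sign and the variable $y_0$ so that properties~2, 4 and~5 can be met simultaneously. Writing $S=\sum_{i=1}^{\ell}c_i$, the key bookkeeping identity is that if we move the term $c_j x_j$ to the right then $b_0=-c_j$ and the left-hand coefficients sum to $S-c_j$; since property~4 asserts exactly that this sum is positive, property~4 is equivalent to $S-c_j>0$, i.e.\ $b_0>-S$.

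First I would dispose of the all-same-sign case, where property~2 demands $b_0<0$. I would multiply by $-1$ if necessary so that every $c_i$ is positive, then move any single term to the right; this gives $b_0<0$, and since all $\ell-1\ge 2$ remaining coefficients are positive there are no negative coefficients on the left (so property~4 holds trivially) and the two largest are positive (so property~5 holds after sorting).

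The main case is when the $c_i$ are not all of one sign, so there is at least one positive and at least one negative coefficient. Now property~2 wants $b_0>0$, which forces us to move a \emph{negative} coefficient to the right. The two remaining constraints are property~5, which needs at least two positive coefficients to survive on the left, and property~4, which needs $S-c_j>0$. To make property~4 as easy as possible I would always move the \emph{most negative} coefficient, since this maximises $S-c_j$. The only remaining degree of freedom is the global sign, so I would compare $\LL$ with its negation $-\LL$: in one orientation the positive coefficients are $\{c_i>0\}$ and in the other they are the negated negative coefficients.

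The hard part will be proving that at least one of these two orientations makes both property~4 and property~5 hold, and I would argue this by a short case analysis on the numbers $p$ and $n$ of positive and negative coefficients (note $p+n=\ell\ge 3$). Letting $P$ and $N$ be the total sizes of the positive and negative coefficients and $M, M'$ the largest sizes of a negative and of a positive coefficient respectively, moving the most negative coefficient turns property~4 into $P+M>N$ with property~5 becoming $p\ge 2$, while in the flipped orientation these read $N+M'>P$ and $n\ge 2$. If $p,n\ge 2$ then both count conditions hold in both orientations, and if \emph{neither} orientation satisfied property~4 we could add the negated inequalities $P+M\le N$ and $N+M'\le P$ to force $M+M'\le 0$, contradicting $M,M'>0$. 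If instead $n=1$ (so $p\ge 2$), then $N=M$ and the unflipped inequality $P+M>N$ reduces to $P>0$, which holds; the case $p=1$ is symmetric. Thus in every case one orientation yields a negative coefficient whose removal leaves at least two positive coefficients on the left together with a positive left-hand sum, and a final sort of those coefficients delivers properties~1 and~5 at once, completing the construction.
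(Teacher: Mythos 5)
Your proposal is correct and takes essentially the same route as the paper: the all-same-sign case is handled identically, and in the mixed-sign case both you and the paper use a global sign flip as the only degree of freedom and then move the most negative coefficient to the right-hand side, sorting the remaining coefficients at the end. The only difference is cosmetic: in the case $p,n\ge 2$ the paper normalizes the orientation so that the negative coefficients have total size at most that of the positive ones (making property~4 immediate), whereas you derive the same disjunction by adding the two negated inequalities and reaching a contradiction.
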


\begin{proof}
First, if all the coefficients have the same sign, then we can assume
they are all strictly positive. Order the coefficients in increasing
order, and set $a_1 = c_1$, $a_2 = c_2$,
$b_i = c_{i+2}$ for $i = 1, \ldots , \ell-3$, and $b_0 = -c_{\ell}$.

Now suppose that not all the coefficients have the same sign.
If there is only one positive coefficient, or only one negative,
we can reverse signs if necessary to ensure there is exactly one
negative coefficient. Otherwise, by
reversing the signs of all the coefficients if necessary, we can
assume that (the negation of) the sum of the negative coefficients is at
most the sum of the positive coefficients. In either case,
reorder the terms so that
the coefficients are in non-decreasing order, then the first term has
a negative coefficient since not all coefficients have the same sign;
move this term to the right-hand side.
Then rename the coefficients and variables to obtain the required
form.
\end{proof}

Note: We will say that the equation $\LL'$ is in standard
form.

We now prove a lemma which shows that a graph may be represented
as a set of integers
relative to a fixed homogeneous linear equation. In this and later
variants, each vertex and each edge is represented by one or more
integers; these integers are all distinct.
\begin{lemma} \label{construct-set}
Consider a linear equation
$c_1 x_1 + \cdots + c_{\ell} x_{\ell} = 0$, where $\ell \ge 3$
and the coefficients $c_i$ are all non-zero, and let
$\LL$ be an equivalent linear equation
$a_1 x_1 + a_2 x_2 + b_1 y_1 + \cdots + b_{\ell-3} y_{\ell-3} = b_0y_0$
in standard form.
Let $G = (V,E)$ be a graph, and let $n = |V|$.
Then we can construct in polynomial time a
set $A \subseteq \mathbb{Z}$ with the following properties:
\begin{enumerate}
\item $A$ is the union of $\ell-1$ sets
$A_V, A_E^0 , A_E^1 , \ldots , A_E^{\ell-3}$, where
$A_V = \{x_v : v \in V\}$ and
$A_E^i = \{y_e^i : e \in E\}$ for each $i = 0, 1, \ldots , \ell-3$;
\item $A = |V| + (\ell-2)|E|$; \label{distinct}
\item for every edge $e = (v,w)$, some permutation of
$(x_v,x_w, y_e^1 , \ldots , y_e^{\ell-3} , y_e^0)$ is a solution to
$\LL$; such a solution we call an edge-solution; \label{edge-sol}
\item if $(z_1, \ldots , z_{\ell})$ is a non-trivial solution to
$\LL$, then $(z_1, \ldots , z_{\ell})$ is a permutation of
$(x_v,x_w, y_e^1 , \ldots , y_e^{\ell-3} , y_e^0)$ for some edge $e = (v,w)$;
\label{no-other-sols}
\item $\max(A) = \mathcal{O}(|V|^{2\ell+2})$; \label{poly-sized}
\item $A \subseteq \mathbb{N}$ unless all the coefficients
$c_1 , \ldots , c_{\ell}$ have the same sign.
\end{enumerate}
\end{lemma}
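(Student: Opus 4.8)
The plan is to realise each vertex as a single integer and each edge as $\ell-2$ integers — one for each ``free'' slot $y^1,\dots,y^{\ell-3}$ together with one ``dependent'' slot $y^0$ — choosing the dependent integer of an edge to be exactly the value that forces the corresponding edge-solution to satisfy $\mathcal{L}$. Concretely, I would work in a base $B$ that is a fixed polynomial in $n$, large compared with $\ell$ and $\max_i|c_i|$, and give every integer of $A$ a bounded number of digit-blocks in base $B$: a high block recording the ``type'' of the integer and, for edge integers, the index of the edge; a middle block again recording the edge; and a low block recording vertex identities. The vertices receive distinct values $x_v$ living entirely in the low block, so that they are comparatively small, while for each edge $e=(v,w)$ the free integers $y_e^1,\dots,y_e^{\ell-3}$ are distinct large values whose high and middle blocks encode $e$ and the slot index $i$; the dependent integer is then defined by $b_0 y_e^0 = a_1x_v + a_2x_w + \sum_i b_i y_e^i$. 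Everything here is computable in polynomial time.

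Properties (1)--(3) are then immediate: the sets $A_V,A_E^0,\dots,A_E^{\ell-3}$ are disjoint because their type blocks differ, their cardinalities give $|A| = n + (\ell-2)|E|$, and each edge yields an edge-solution by construction. Integrality is arranged by scaling the free and vertex values through by $b_0$ (equivalently, choosing the free values in the residue class making $b_0$ divide $a_1x_v+a_2x_w+\sum_i b_iy_e^i$). The size bound (5) follows since every block is polynomially bounded and only $O(\ell)$ blocks are used, giving $\max(A)=O(n^{2\ell+2})$. For the sign condition (6): when the $c_i$ are not all of one sign we have $b_0>0$, and since the standard-form properties guarantee that the negative coefficients are dominated in total size by the positive ones and that the two largest left-hand coefficients are positive, taking the free values at a common large scale makes $a_1x_v+a_2x_w+\sum_i b_iy_e^i>0$, so $y_e^0>0$ and $A\subseteq\mathbb{N}$; when all $c_i$ share a sign we have $b_0<0$ and $y_e^0$ comes out negative, exactly as permitted.

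The real work is property (4): that the only non-trivial solutions are permutations of edge-solutions. Here I would argue by separation of scales. Given any solution $(z_1,\dots,z_\ell)$, I rewrite $\mathcal{L}$ homogeneously as $\sum_j d_j z_j = 0$, where $d_j$ is the coefficient in slot $j$, and reduce this equation block by block. Since the coefficients are bounded and only $\ell$ terms occur, the contribution of each block is too small to disturb the next, so the equation splits into one constraint per block. The high and middle blocks will force the edge-type elements used to carry a single common edge index $e$ and to fill each slot exactly once, hence to be precisely $y_e^0,\dots,y_e^{\ell-3}$; the residual low-block equation will then force the two vertex elements used to be exactly the endpoints of $e$.

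The main obstacle I anticipate is designing the blocks so that these splits are genuinely forced. I must rule out carries between blocks (handled by making $B$ large relative to $\ell\max_i|c_i|$), accidental cancellations among edge integers of different edges or different slots (handled by placing the slot-$i$ values of distinct edges in disjoint arithmetic ranges), and degenerate solutions that repeat a value or use fewer than two vertex integers (which I would exclude using the non-triviality hypothesis together with the type blocks). The most delicate point is the endpoint recovery: I need the low-block data recorded in $y_e^0$ to determine, via the low-block equation, the \emph{unordered} pair $\{v,w\}$, and this requires care in the vertex encoding. When $a_1\neq a_2$ the combination $a_1x_v+a_2x_w$ already distinguishes the two slots, but when $a_1=a_2$ only the sum $v+w$ survives, so a pairing-function style encoding of vertices (spread across an extra low sub-block) will be needed to make the pair recoverable; getting this encoding exactly right is where the bulk of the verification lies.
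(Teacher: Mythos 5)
Your strategy is genuinely different from the paper's: you propose an explicit digit-block (separation-of-scales) encoding, whereas the paper treats every label as an unknown, writes down \emph{all} linear relations that must be avoided (distinctness, plus every non-edge, non-trivial instance of $\LL$ after substituting out the dependent labels), proves the key combinatorial fact that none of these reduced combinations is identically zero, and then chooses the labels greedily, each avoiding the polynomially many forbidden values. The trouble is that your scale-separation argument only excludes solutions whose block-wise contributions fail to cancel; it says nothing about solutions living \emph{inside} a single block, nor about solutions in which the large blocks cancel identically, and that is exactly where the difficulty of Property~4 sits. Three concrete failures. (i) A solution consisting entirely of vertex labels: for $x+y=z$, vertex labels $1,2,3$ are distinct and low-block, yet $1+2=3$ is a non-trivial solution; no type or edge-index information can exclude it, so ``distinct'' vertex values are far from enough. (ii) A dependent label is a mixture of scales, $y_e^0=(a_1x_v+a_2x_w+\sum_i b_iy_e^i)/b_0$, so its high blocks can cancel exactly against those of the free labels of the same edge. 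For the standard form $-x_1+x_2+y_1=y_0$ (i.e.\ $x+y=z+w$) and $e=(v',w')$ one has $y_e^0-y_e^1=x_{w'}-x_{v'}$, so $(x_u,x_v,y_e^1,y_e^0)$ is a spurious non-trivial solution whenever $x_v-x_u=x_{w'}-x_{v'}$ (e.g.\ vertex labels $1,2,3,4$). No block design prevents this; you need the vertex labels to avoid all such four-term relations, a Sidon-type condition. (iii) Solutions among free labels of distinct edges: slot-$1$ labels of four edges with edge encodings $10,20,30,40$ lie in disjoint ranges yet satisfy $-10+20+30=40$, giving a solution of $-z_1+z_2+z_3=z_4$; so ``disjoint arithmetic ranges'' does not rule these out either.

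Repairing (i)--(iii) forces you to choose the vertex labels and the edge encodings so as to avoid a polynomial number of linear relations, including all those produced by substituting the dependent labels --- which is precisely the paper's mechanism, and precisely the case analysis (which entries are dependent labels, which edges they come from, which endpoints are shared) that occupies the bulk of the paper's proof of Property~4. In other words, the block structure does not remove the real work: at best it re-proves distinctness and the existence of edge-solutions, which are the easy parts, while the heart of the lemma --- that \emph{after substitution} no unwanted combination can vanish identically, and that values avoiding all remaining combinations can be found within the bound $\mathcal{O}(|V|^{2\ell+2})$ --- is deferred or omitted. Your own flagged worry (recovering the unordered pair $\{v,w\}$ when $a_1=a_2$) is a special case of (ii), but the issue is not recoverability of the pair from $y_e^0$; it is the avoidance of \emph{all} accidental linear relations among labels sharing a scale. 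Until that avoidance argument (or an explicit relation-free construction meeting the stated size bound) is supplied, Property~4 is unproven.
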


\begin{proof}
We need to choose the elements of $A$ to satisfy the conditions~1--6.
The set $A$ consists of a set $A_V$ of vertex labels (one per
vertex) and a set $A_E = A_E^0 \cup A_E^1 \cup \ldots \cup
A_E^{\ell-3}$ of edges labels ($\ell-2$ per edge).

In order to choose the labels we will use a set of variables, one for each
element of $A$, and construct a set of constraints (inequalities and
equations) which these must satisfy. We then choose values for each variable so
that all the constraints are satisfied. We will use upper case for
each variable name and the corresponding lower case for its value.
%
%
The set of variables is
$B = B_V \cup B_E$, where
$B_E = B_E^0 \cup B_E^1 \cup \cdots \cup B_E^{\ell-3}$,
$B_V = \{X_v : v \in V\}$ and
$B_E^i = \{ Y_e^i : e \in E\}$ for each $i = 0 , 1 , \ldots , \ell-3$.
The variable $X_v$ is associated with the vertex $v$, and the
variables $Y_e^0, \ldots , Y_e^{\ell-3}$ with the edge~$e$.
We will call the elements of $B_V$ vertex variables, those of
$B_E^1 \cup \cdots \cup B_E^{\ell-3}$ free edge variables, and those
of $B_E^0$ dependent edge variables.

We now consider the constraints that the variables must satisfy. Fix
an arbitrary ordering of the vertex set $V$.

First, for each edge $e = (v,w)$, we have a set of variables
corresponding to the edge and its endpoints, that is
$\{X_v, X_w, Y_e^1, \ldots , Y_e^{\ell-3}, Y_e^0\}$. We will call such
a set an edge-set of variables. To ensure that Condition~\ref{edge-sol} is satisfied,
we will require these to satisfy
the equation
\begin{equation} \label{con-equation}
a_1 X_v + a_2 X_w + b_1 Y_e^1 + \cdots + b_{\ell-3} Y_e^{\ell-3} =
b_0 Y_e^0,\tag{$*$}
\end{equation}
where $v < w$ in the vertex ordering.

Second, we have a set of inequalities of two types. In each case we
require that some linear combination of variables is non-zero.
Type~1 ensures
that all the labels are distinct, so that Condition~\ref{distinct} is satisfied,
while
Type~2 ensures that there are no non-trivial solutions to the
equation $\LL$, other than edge-solutions, so that Condition~\ref{no-other-sols} is satisfied.

For any sequence
$\s = (W_1, W_2, Z_1, \ldots, Z_{\ell-3}, Z_0) \in B^{\ell}$ of
variables, we say that $\s$ is an edge-sequence if
$\{W_1, W_2, Z_1, \ldots, Z_{\ell-3}, Z_0\}$ is an edge-set.

The inequalities are as follows:

Type~1: For any distinct pair $\s = (Z,Z') \in B^2$, we have the linear
combination $\lc(\s) = Z - Z'$, and require that $\lc(\s) \ne 0$.

Type~2: For any sequence
$\s = (W_1, W_2, Z_1, \ldots, Z_{\ell-3}, Z_0) \in B^{\ell}$, we have
the linear combination
$\lc(\s) = a_1 W_1 + a_2 W_2 + b_1 Z_1 + \cdots +
b_{\ell-3} Z_{\ell-3} - b_0 Z_0$. Provided that (i) $\s$ is not an
edge-sequence and (ii) $\lc(\s)$ is not identically zero, then we
require that $\lc(\s) \ne 0$. Note that $\lc(\s)$ being identically zero
corresponds to a trivial solution of the equation, where the total
coefficient of each variable is zero and so the terms cancel.

Now when choosing values to satisfy all of the inequalities, we must
take account of the constraints~\eqref{con-equation}.
Thus in each of the linear combinations $\lc(\s)$,
we substitute for
each dependent edge variable, that is, for any $e = (v,w)$, set
$Y_e^0 =
(a_1 X_{v} + a_2 X_w + b_1 Y_e^1 + \cdots + b_{\ell-3}
Y_e^{\ell-3})/b_0$ and collect terms to obtain a reduced
(rational) linear combination $\rlc(\s)$
of the vertex variables and free edge variables in the set
$B_V \cup B_E^1 \cup \cdots \cup B_E^{\ell-3}$.

%

It is clear that if values are chosen for all the variables,
satisfying the constraints~\eqref{con-equation}, then the values of
$\lc(\s)$ and $\rlc(\s)$ will be equal. Hence if each inequality
$\lc(\s) \ne 0$ is replaced by the
corresponding inequality $\rlc(\s) \ne 0$ and the values of the
variables are chosen so that these new inequalities are satisfied,
then the original inequalities, expressed in terms of $\lc(\s)$, must
also be satisfied.
Each inequality $\rlc(\s)\ne 0$ can be satisfied providing $\rlc(\s)$
is not identically zero.
Note however that potentially the reduced combination $\rlc(\s)$ is
identically zero even though $\lc(\s)$ is not, because terms may cancel
after substituting for the dependent edge variables $Y_e^0$. Hence
before we choose values for the variables, we must check that this
does not occur, that is,
we must verify that if $\s$ is not an edge-sequence and $\lc(\s)$
is not identically zero, then $\rlc(\s)$ is not identically zero.

%
%
%
%
For Type~1, $\rlc(\s)$ cannot be identically zero since the variables $Z,Z'$ are
distinct, and even if at least one is of the form $Y_e^0$, they cannot
cancel out.

For Type~2, consider any
sequence $\s = (W_1, W_2, Z_1, \ldots, Z_{\ell-3}, Z_0) \in
B^{\ell}$.
Suppose that $\rlc(\s)$
is identically zero. Then we need to show that either $\s$ is an
edge-sequence or $\lc(\s)$ is identically zero.

First suppose that $\ell \ge 4$ (so that each edge has at least one
free label). If none of
$W_1, W_2, Z_1, \ldots, Z_{\ell-3}, Z_0$ is a dependent edge variable,
then there is no substitution, so $\lc(\s) = \rlc(\s)$, hence $\lc(\s)$
is identically zero.

So suppose that at least one
of $W_1, W_2, Z_1, \ldots, Z_{\ell-3}, Z_0$ is a dependent edge
variable.
If the total coefficient of each dependent edge variable in $\lc(\s)$
is zero,
then the other terms are the same in $\rlc(\s)$ as in $\lc(\s)$, so must
reduce to zero, hence $\lc(\s)$ is identically zero.

Thus we may assume that $\s$ contains some dependent edge variable
$Y_e^0$ whose terms do not cancel, that is, the total coefficient
of $Y_e^0$ in $\lc(\s)$ is non-zero.
Then all of $Y_e^1, \ldots , Y_e^{\ell-3}$ must also occur
in the sequence $\s$, since if $Y_e^k$ does not occur in $\s$, the
coefficient of $Y_e^k$ in $\rlc(\s)$ is non-zero, contrary to
assumption.

Hence at least $\ell-2$ of the
terms in the sequence $\s$ are edge variables of $e$, leaving at most two other
terms. If one of them is
a (free or dependent) edge variable of some edge $e' \ne e$, then
$\rlc(\s)$ will have a non-zero coefficient of $Y_{e'}^k$ for
some $k \ge 1$ unless the final term is also an edge variable of $e'$.
But then there is an endpoint $v$ of $e$ which
is not an endpoint of $e'$, and the variable $X_v$ has non-zero
coefficient in $\rlc(\s)$, contrary to assumption.

Thus the two remaining terms in $\s$ must be vertex variables and
so must be the variables of the endpoints of $e$. But then $\s$ is an
edge-sequence.

Finally
consider the case when $\ell=3$. In this case $\lc(\s)$ is just
$a_1 W_1 + a_2 W_2 - b_0 Z_0$, with $a_1, a_2$ positive,
and for each edge $e = (v,w)$, the only edge variable is the
dependent one
$Y_e^0 = (a_1 X_v + a_2 X_w)/b_0$.
If exactly one of $W_1,W_2,Z_0$ is an
edge variable, then the other two must be the variables of its endpoints,
so $\s$ is an edge-sequence. If exactly
two are edge variables, these two terms cannot cancel in $\lc(\s)$,
for then the third term would be non-zero in $\rlc(\s)$.
So either the two terms come from the same edge and their sum has
non-zero coefficient of two vertex variables in $\rlc(\s)$, or they come
from distinct edges with at least three endpoints between them, and
then the sum of these two terms must have non-zero coefficient of at
least two vertex variables. 
In either case, since the remaining term is a variable of a single
vertex, at least one vertex has non-zero coefficient in $\rlc(\s)$.
Finally, suppose all three are edge
variables. If all three variables are from the same edge $e$, then
$\lc(\s) = c Y_e^0$ for some $c$, and since $\rlc(\s)$ is identically
zero, $c$ must be zero, so $\lc(\s)$ is also identically zero.
If one variable is from edge $e$, and two from edge $e'$, where
$e' \ne e$, then some vertex $v$ is an endpoint of $e$ but not of $e'$,
and $X_v$ has non-zero coefficient in $\rlc(\s)$, contrary to
assumption.
So all three variables are from distinct edges.
Then either there is a vertex which is an endpoint of just one of
them, so that its variable has non-zero coefficient in $\rlc(\s)$, or the
edges form a
triangle, in which case one of the vertices occurs only in $W_1$ and
$W_2$ and so its variable has non-zero coefficient in $\rlc(\s)$.


We conclude that if $\rlc(\s)$ is identically zero, then either
$\s$ is an edge-sequence or $\lc(\s)$ is identically zero.
We now estimate the number of inequalities which must be satisfied.
The number $L$ of vertex and edge labels is $n + (\ell-2)|E|$, so
$L = \mathcal{O}(n^2)$. Then the number $M$ of inequalities is at most
$\binom{L}{2} + L^{\ell}$ which is
$\mathcal{O}(n^{2\ell})$.

Order the edges arbitrarily.
We choose the values $\val(Z)$ of each variable $Z$, starting with
the vertex labels $x_v = \val(X_v)$ in order, then
for each edge $e$ in turn, choosing the
$\ell-3$ free labels $y_e^i = \val(Y_e^i)$ for $i = 1, \ldots ,
{\ell-3}$
(the dependent label $y_e^0$ will then be determined by
$y_e^0 = (a_1 x_{v} + a_2 x_w + b_1 y_e^1 + \cdots + b_{\ell-3}
y_e^{\ell-3}
)/b_0$).
We shall choose the value of each label to be an integer multiple of $b_0$ and
we must avoid any value
which would make one of the reduced linear combinations
equal to zero. When choosing each label, there is some subset of
the reduced linear combinations which contain the new label and whose
other labels have already been chosen. Each reduced linear
combination in this subset forbids one value for the new label (though
the forbidden value will be irrelevant if it is not an integer
multiple of $b_0$). Thus each label may be chosen from any set
containing at least $M+1$ positive integer multiples of $b_0$. We
choose the labels so that they are increasing; thus each successive
label is chosen from the next available block of $M+1$ integer
multiples of $b_0$. In
particular, none of the labels will be greater than $L(M+1)b_0$, which is
$\mathcal{O}(n^{2\ell+2})$, thus satisfying Condition~\ref{poly-sized}.

Finally, consider any dependent label $y_e^0$, where $e = (v,w)$ with $v<w$.
We need to check that provided not all of the coefficients of $\LL$
have the same sign, $y_e^0$ is a positive integer. Note that in this
case, $b_0 > 0$.
By definition,
$y_e^0 = (a_1 x_{v} + a_2 x_w + b_1 y_e^1 + \cdots + b_{\ell-3}
y_e^{\ell-3})/b_0$. Since all of the other labels are chosen to be
integer multiples of $b_0$, $y_e^0$ will be an integer. Also, because
of the order in which the labels are chosen, we have
$x_{v} < x_w < y_e^1 < \cdots < y_e^{\ell-3}$. Since
$a_1 \le a_2 \le b_1 \le \cdots  \le b_{\ell-3}$, and
(the negation of) the sum of the negative coefficients is less than
the sum of the positive coefficients, it follows that $y_e^0 > 0$, and
so $y_e^0 \in \mathbb{N}$. Thus $A \subseteq \mathbb{N}$, as required.
\end{proof}

%
Note: The exponent $2\ell+2$ could be reduced to $2\ell-1$, since for
each label the number of linear combinations involving it is
$\mathcal{O}(n^{2(\ell-1)})$ and the requirement that the labels be
totally ordered can be weakened to the requirement that each edge-set
is totally ordered, reducing the factor this contributes from $O(n^2)$
to $O(n)$.

\begin{cor} \label{ind-set-one}
Let $\LL$ be an equation
$c_1 x_1 + \cdots + c_{\ell} x_{\ell} = 0$, where $\ell \ge 3$
and the coefficients $c_i$ are all non-zero.
Let $G = (V,E)$ be a graph and $A = A_V \cup A_E$ the set
constructed in Lemma~\ref{construct-set}.

Then for any $k \in \mathbb{N}$, there is a one-to-one correspondence
between independent sets of $G$ of cardinality $k$ and the
$\LL$-free subsets of $A$ of cardinality $|A_E| + k$ which
contain all the elements of $A_E$.
\end{cor}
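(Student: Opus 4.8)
The plan is to establish the bijection by showing that an independent set of $G$ corresponds exactly to an $\LL$-free subset of $A$ that contains all of $A_E$. The key structural fact, supplied by Lemma~\ref{construct-set}, is that the only non-trivial solutions to $\LL$ inside $A$ are edge-solutions, each of which uses the two vertex labels $x_v, x_w$ together with the full set of edge labels $y_e^1, \ldots, y_e^{\ell-3}, y_e^0$ of a single edge $e = (v,w)$. I would make this observation the pivot of the argument: since every non-trivial solution is ``anchored'' to a specific edge and uses both of that edge's vertex labels, a subset of $A$ is $\LL$-free precisely when, for every edge $e$, it omits at least one of the two endpoint labels $x_v, x_w$ (the edge labels themselves never introduce a violation unless both endpoints are present).

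First I would define the correspondence explicitly. Given an independent set $I \subseteq V$ with $|I| = k$, map it to $A' = A_E \cup \{x_v : v \in I\}$, which has cardinality $|A_E| + k$ and contains all of $A_E$. Conversely, given an $\LL$-free subset $A' \subseteq A$ of size $|A_E| + k$ containing $A_E$, map it to $I = \{v \in V : x_v \in A'\}$; since $A'$ contains all $|A_E|$ edge labels and has size $|A_E| + k$, exactly $k$ vertex labels lie in $A'$, so $|I| = k$. These two maps are plainly mutual inverses, so the substance of the claim is that each map sends valid objects to valid objects.

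The two implications to verify are the following. For the forward direction, I would show that if $I$ is independent then $A' = A_E \cup \{x_v : v \in I\}$ is $\LL$-free. Suppose not; then $A'$ contains a non-trivial solution, which by Condition~\ref{no-other-sols} must be an edge-solution for some edge $e = (v,w)$, and hence must contain both $x_v$ and $x_w$. But then $v, w \in I$ with $vw \in E$, contradicting independence. For the reverse direction, I would show that if $A'$ contains $A_E$ and is $\LL$-free, then $I = \{v : x_v \in A'\}$ is independent. If $I$ were not independent, some edge $e = (v,w)$ would have both $v, w \in I$, so both $x_v$ and $x_w$ lie in $A'$; since $A'$ also contains all the edge labels $y_e^1, \ldots, y_e^{\ell-3}, y_e^0$ (as $A_E \subseteq A'$), Condition~\ref{edge-sol} guarantees that $A'$ contains the full edge-solution for $e$, which is a non-trivial solution, contradicting $\LL$-freeness.

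The argument is essentially a bookkeeping exercise once the structural characterisation of solutions from Lemma~\ref{construct-set} is in hand, so I do not anticipate a genuine obstacle. The one point demanding slight care is the claim that an edge-solution is genuinely \emph{non-trivial}: I would confirm that the edge labels are chosen so that the entries $x_v, x_w, y_e^1, \ldots, y_e^{\ell-3}, y_e^0$ of an edge-solution are pairwise distinct (which follows from the Type~1 inequalities ensuring all labels in $A$ are distinct), and note that a trivial solution would require equal entries within each partition class summing to zero in coefficients; distinctness of all entries rules this out whenever the equation has no coefficient equal to zero, so every edge-solution is a non-trivial solution as required.
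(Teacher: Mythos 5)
Your proposal is correct and follows essentially the same route as the paper: both set up the map $I \mapsto A_E \cup \{x_v : v \in I\}$ with inverse $A' \mapsto \{v : x_v \in A'\}$ and invoke Conditions~3 and~4 of Lemma~\ref{construct-set} to translate between independence and $\LL$-freeness. Your explicit verification that edge-solutions are non-trivial (distinct entries force singleton partition classes, which would require a zero coefficient) is a point the paper leaves implicit, and it is a welcome addition rather than a deviation.
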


\begin{proof}
The corollary follows immediately from Lemma~\ref{construct-set}.
Given
an independent set $I$ of $G$, let $A_I = \{x_v : v \in I\}$,
then $A_I \cup A_E$ is
an $\LL$-free subset of $A$ of size
$|I|+|A_E|$;
since the vertex labels are distinct,
the $\LL$-free subsets
corresponding to independent sets $I_1 \neq I_2$ are distinct.
Also, given any $\LL$-free subset $S\cup A_E$ of $A$ of size
$|S|+|A_E|$, then $V_S = \{ v : x_v \in S\}$ is an independent set in $G$
of size $|S|$, and again we obtain a distinct
independent set for each such $\LL$-free subset.
\end{proof}

\begin{cor} \label{ind-set-two}
Let $\LL$ be an equation
$c_1 x_1 + \cdots + c_{\ell} x_{\ell} = 0$, where $\ell \ge 3$
and the coefficients $c_i$ are all non-zero.
Let $G = (V,E)$ be a graph and $A = A_V \cup A_E$ the set
constructed in Lemma~\ref{construct-set}.

Then for any $k \in \mathbb{N}$, $G$ contains an independent set of
cardinality $k$ if and only if $A$ contains an $\LL$-free subset of
cardinality $|A_E|+k$.
%
\end{cor}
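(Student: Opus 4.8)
The plan is to derive Corollary~\ref{ind-set-two} from the stronger Corollary~\ref{ind-set-one}, which already establishes a one-to-one correspondence between independent sets of $G$ of size $k$ and $\LL$-free subsets of $A$ of size $|A_E| + k$ that contain all of $A_E$. The task here is to remove the restriction ``which contain all the elements of $A_E$'' from the statement, so that the correspondence is between independent sets and \emph{arbitrary} $\LL$-free subsets of the appropriate cardinality. The key observation that makes this work is that $A_E$ is itself $\LL$-free: by Condition~\ref{no-other-sols} of Lemma~\ref{construct-set}, every non-trivial solution to $\LL$ uses both endpoint labels $x_v, x_w$ of some edge, so a subset containing no vertex labels cannot contain any non-trivial solution.

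First I would prove the forward direction. If $G$ has an independent set of size $k$, then Corollary~\ref{ind-set-one} immediately produces an $\LL$-free subset of $A$ of cardinality $|A_E| + k$ (namely $A_I \cup A_E$), so $A$ certainly contains such a subset.

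The reverse direction is where the real work lies, and I expect it to be the main obstacle. Suppose $A$ contains an $\LL$-free subset $S'$ with $|S'| = |A_E| + k$; I cannot directly apply Corollary~\ref{ind-set-one} because $S'$ need not contain all of $A_E$. The strategy is to show that $S'$ can be modified into an $\LL$-free subset of the same cardinality that \emph{does} contain all of $A_E$, thereby reducing to the previous corollary. Write $S' = S_V \cup S_E$ where $S_V \subseteq A_V$ and $S_E \subseteq A_E$, and let $t = |A_E| - |S_E| \ge 0$ be the number of missing edge labels. Then $|S_V| = k + t \ge k$. I would argue that replacing any $t$ vertex labels in $S_V$ by the $t$ missing edge labels of $A_E$ yields a set of the same size that is still $\LL$-free: concretely, form $S'' = S_V'' \cup A_E$ where $S_V'' \subseteq S_V$ has exactly $k$ elements. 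Since $S_V'' \subseteq S_V$ and every non-trivial solution to $\LL$ lying inside $S''$ must, by Condition~\ref{no-other-sols}, be an edge-solution using two vertex labels from $S_V''$ together with the edge labels of that edge, any such solution would already have been present in $S'$ (whose vertex part contains $S_V''$ and whose edge part may be smaller) only if both vertex labels \emph{and} all edge labels of that edge were present in $S'$; the potential difficulty is that adding the previously-missing edge labels of $A_E$ could create a new edge-solution whose two endpoint labels happen to lie in $S_V''$. I would rule this out by noting that if $x_v, x_w \in S_V'' \subseteq S_V \subseteq S'$ for an edge $e = (v,w)$, then $S'$ already contained both endpoint labels, and since $S'$ is $\LL$-free it cannot have contained all of $y_e^0, \ldots, y_e^{\ell-3}$ as well; hence at least one edge label of $e$ was among the missing ones. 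This forces a counting argument relating the number of ``dangerous'' pairs present in $S_V''$ to the number $t$ of restorable edge labels, which must be handled carefully.

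To make the reduction clean I would instead phrase it as follows: among all $\LL$-free subsets of $A$ of cardinality $|A_E| + k$, choose one, call it $T$, containing as many elements of $A_E$ as possible, and show $A_E \subseteq T$. If some edge label were missing from $T$, I would exhibit an element of $T \setminus A_E$ (a vertex label) whose removal, followed by adding a missing edge label, keeps the set $\LL$-free and of the same size while increasing the overlap with $A_E$, contradicting maximality. The existence of a safe vertex label to remove follows because $|T \cap A_V| = |T| - |T \cap A_E| > |T| - |A_E| = k \ge 0$, so $T$ contains at least one vertex label, and removing any single vertex label destroys every edge-solution through it, making room to safely insert an edge label without creating a non-trivial solution (any new solution would be an edge-solution requiring two endpoint labels still present, but the edge label being added belongs to an edge whose presence as a full edge-solution in $T$ was already precluded by $\LL$-freeness). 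Once $A_E \subseteq T$ is established, Corollary~\ref{ind-set-one} applies to $T$ directly, yielding an independent set of $G$ of size $|T| - |A_E| = k$, which completes the reverse direction and hence the proof.
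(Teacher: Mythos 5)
Your overall skeleton matches the paper's: reduce to Corollary~\ref{ind-set-one} by showing that an $\LL$-free subset of cardinality $|A_E|+k$ can be exchanged into one of the same cardinality containing all of $A_E$, the exchange being ``remove a vertex label, insert a missing edge label''. However, the step that carries the whole argument is wrong as you justify it. You claim that since $|T\cap A_V|>0$, removing \emph{any} single vertex label makes it safe to insert the missing edge label $y_e^i$. This is false. Suppose $e=(v,w)$ and both $x_v$ and $x_w$ lie in $T$ together with every label of $e$ other than $y_e^i$; if the vertex label you remove is $x_u$ for some $u\notin\{v,w\}$, then after inserting $y_e^i$ the set contains the full edge-solution for $e$ and is no longer $\LL$-free. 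Concretely, take $\ell=3$, let $G$ have vertices $u,v,w$ and the single edge $e=(v,w)$, let $k=2$ and $T=\{x_u,x_v,x_w\}$: removing $x_u$ and adding $y_e^0$ yields $\{x_v,x_w,y_e^0\}$, which contains an edge-solution. Your parenthetical defence --- that the edge-solution for $e$ ``was already precluded by $\LL$-freeness'' of $T$ --- is circular: it was precluded only because $y_e^i$ itself was absent, so it offers no protection once $y_e^i$ is put back. Your first attempt (taking $S_V''\cup A_E$ for an arbitrary $k$-subset $S_V''$ of $S_V$) founders on exactly the same point, as you yourself noted before deferring it to an unspecified ``counting argument''.

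The missing idea is that the vertex removed must be chosen \emph{relative to the edge being repaired}, and this is precisely what the paper does. By condition~\ref{no-other-sols} of Lemma~\ref{construct-set}, every non-trivial solution involving $y_e^i$ must use both $x_v$ and $x_w$. Hence if one of $x_v,x_w$ is absent from $T$, you may insert $y_e^i$ outright (deleting an arbitrary vertex label, which exists since $|T\cap A_V|\ge k+1\ge 1$, only if you need to restore the cardinality); while if both are present, you must delete one of $x_v,x_w$ themselves and then insert $y_e^i$, which is safe because the only possible new solution, the edge-solution for $e$, now lacks an endpoint. With this case distinction in place, either your maximality framing or the paper's iterative repair goes through, and the two arguments become essentially identical.
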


\begin{proof}
By Corollary~\ref{ind-set-one}, it suffices to show that, if $A$
contains an $\LL$-free subset of cardinality $|A_E| + k$, then in fact
$A$ contains such a subset which includes all elements of $A_E$.  To
see this, let $A_1$ be an $\LL$-free subset of $A$ of size $|A_E| + k$
which does not contain all elements of $A_E$; we will show how to
construct an $\LL$-free subset of equal or greater size which does
have this additional property.

Suppose that $y_e^i\in A_E$ but $y_e^i \not \in A_1$, and suppose
that $e = (v,w)$.
Consider the labels $x_v,x_w$ of the endpoints of $e$.
By
Lemma~\ref{construct-set}(3) every non-trivial solution to $\LL$
involving $y_e^i$ must also involve
$x_v$ and $x_w$.
If one of these does not lie in $A_1$ we add $y_e^i$ to $A_1$ without
creating a solution to $\LL$. Otherwise, arbitrarily remove one of
$x_v$ and $x_w$ from $A_1$ and replace it with $y_e^i$.
Repeating this process, we obtain an $\LL$-free subset which contains
$A_E$ and is at least as large as $A_1$.
\end{proof}
%

\begin{thm} \label{np-completeness}
Let $\LL$ be a linear equation
$c_1 x_1 + \cdots + c_{\ell} x_{\ell} = 0$, where $\ell \ge 3$
and the coefficients $c_i$ are all non-zero.
Then $\LL$-\textsc{Free Subset} is strongly $\NP$-complete. If the
coefficients of $\LL$ are not all of the same sign, then the
input set $A$ can be restricted to be a subset of $\mathbb{N}$.
\end{thm}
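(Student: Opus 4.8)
The plan is to reduce from the \textsc{Independent Set} problem, which is well known to be $\NP$-complete, using the machinery already established; almost all of the real work has been done in Lemma~\ref{construct-set} and its corollaries, so the theorem is essentially an assembly step. I would split the argument into membership in $\NP$, followed by $\NP$-hardness together with the bound on the integer sizes that upgrades ordinary hardness to strong hardness, and finally the remark about restricting to $\mathbb{N}$.

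For membership, the key observation is that $\LL$ is fixed, so $\ell$ is a constant. Given a candidate subset $A' \subseteq A$ with $|A'| = k$, I would verify in polynomial time that $A'$ is $\LL$-free by examining all $|A'|^{\ell} = \mathcal{O}(|A|^{\ell})$ sequences in $(A')^{\ell}$; for each such sequence I test whether it satisfies $\LL$ and, if so, whether it is trivial. Since $\ell$ is constant, the number of partitions of $\{1,\ldots,\ell\}$ is a constant, so checking triviality against conditions (i) and (ii) of the definition costs only constant time per sequence. Hence the whole check runs in time polynomial in $|A|$, placing $\LL$-\textsc{Free Subset} in $\NP$.

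For hardness I would reduce from \textsc{Independent Set}. Given an instance $(G,k)$ with $G = (V,E)$, first apply Lemma~\ref{make-standard} to rewrite $\LL$ in standard form (this depends only on the fixed equation and so is constant-time), then apply Lemma~\ref{construct-set} to build, in polynomial time, the set $A = A_V \cup A_E$. I would output the instance $(A,\,|A_E| + k)$. By Corollary~\ref{ind-set-two}, $G$ has an independent set of cardinality $k$ if and only if $A$ has an $\LL$-free subset of cardinality $|A_E| + k$, which gives correctness; as an independent set of size at least $k$ yields one of size exactly $k$, this matches the usual formulation. To obtain \emph{strong} $\NP$-completeness I would invoke Condition~\ref{poly-sized} of Lemma~\ref{construct-set}, namely $\max(A) = \mathcal{O}(|V|^{2\ell+2})$: since the constructed instance has at least $|V|$ elements, every integer in it is bounded by a fixed polynomial in the instance size, and as \textsc{Independent Set} carries no large numbers, the reduction shows $\LL$-\textsc{Free Subset} stays $\NP$-hard even when all integers are polynomially bounded.

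The final sentence of the theorem then follows from Condition~6 of Lemma~\ref{construct-set}: when the coefficients of $\LL$ are not all of the same sign we may take $A \subseteq \mathbb{N}$, so the hardness persists for input sets of natural numbers. I do not expect any genuine obstacle here, precisely because the substantive content lives in Lemma~\ref{construct-set}; the only point demanding care is the $\NP$-membership argument, where one must be explicit that triviality of a solution is decidable in constant time for fixed $\ell$, and that the brute-force search over $\ell$-tuples is polynomial only because $\ell$ is a constant of the problem rather than part of the input.
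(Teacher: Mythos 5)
Your proposal is correct and follows essentially the same route as the paper: reduce \textsc{Independent Set} to $\LL$-\textsc{Free Subset} by constructing $(A,|A_E|+k)$ via Lemma~\ref{construct-set} and concluding with Corollary~\ref{ind-set-two}, with strong hardness and the restriction to $\mathbb{N}$ coming from Conditions~\ref{poly-sized} and~6 of that lemma. The only difference is that you spell out the $\NP$-membership check and the polynomial bound on the integers explicitly, which the paper compresses into ``The result follows.''
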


\begin{proof}
We reduce \textsc{Independent Set} to $\LL$-\textsc{Free Subset}.
If $(G,k)$ is an instance of \textsc{Independent Set},
we construct (in polynomial
time) an instance $(A,|A_E|+k)$ of
$\LL$-\textsc{Free Subset}, with $A = A_V \cup A_E$, as in
Lemma~\ref{construct-set}.
Then by Corollary~\ref{ind-set-two},
$G$ has an independent set of size
$k$ if and only if $A$ has a subset of size $|A_E|+k$ with no
non-trivial solution
to $\LL$. The result follows.
\end{proof}

\subsection{Sub-equation free sets}
In order to prove the later results, it is helpful to have a slightly
stronger version of Lemma~\ref{construct-set}.

For a homogeneous equation $\LL$, we will call an equation obtained by
deleting a non-empty set of the terms of the equation
(that is, some variables and their coefficients) a proper sub-equation of $\LL$.
We will say that a set
$S$ is $\LL$-proper-sub-equation-free if and only if $S$ does not contain a
non-trivial solution to any of the proper sub-equations of $\LL$. If $S$
is also $\LL$-free, we will say it is $\LL$-sub-equation-free.

We now show that in Lemma~\ref{construct-set}, we can ensure that the
set $A$ constructed is $\LL$-proper-sub-equation-free.

\begin{lemma} \label{construct-set-sef}
Consider a linear equation
$c_1 x_1 + \cdots + c_{\ell} x_{\ell} = 0$, where $\ell \ge 3$
and the coefficients $c_i$ are all non-zero, and let
$\LL$ be an equivalent linear equation
$a_1 x_1 + a_2 x_2 + b_1 y_1 + \cdots + b_{\ell-3} y_{\ell-3} = b_0y_0$
in standard form.
Let $G = (V,E)$ be a graph. Then we can construct in polynomial time a
set $A \subseteq \mathbb{Z}$ with the following properties:
\begin{enumerate}
\item $A$ is the union of $\ell-1$ sets
$A_V, A_E^0 , A_E^1 , \ldots , A_E^{\ell-3}$, where
$A_V = \{x_v : v \in V\}$ and
$A_E^i = \{y_e^i : e \in E\}$ for each $i = 0, 1, \ldots , \ell-3$;
\item $|A| = |V| + (\ell-2)|E|$;
\item for every edge $e = (v,w)$, some permutation of
$(x_v,x_w, y_e^1 , \ldots , y_e^{\ell-3} , y_e^0)$ is a solution to
$\LL$; such a solution we call an edge-solution;
\item if $(z_1, \ldots , z_{\ell})$ is a non-trivial solution to
$\LL$, then $(z_1, \ldots , z_{\ell})$ is a permutation of
$(x_v,x_w, y_e^1 , \ldots , y_e^{\ell-3} , y_e^0)$ for some edge $e = (v,w)$;
\item $\max(A) = \mathcal{O}(|V|^{2\ell+2})$;
\item $A \subseteq \mathbb{N}$ unless all the coefficients of $\LL$
have the same sign;
\item $A$ is $\LL$-proper-sub-equation-free.
\end{enumerate}
\end{lemma}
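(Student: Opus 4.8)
The plan is to reuse the construction of Lemma~\ref{construct-set} essentially unchanged, and merely to enlarge the system of inequalities that the labels of $A$ must satisfy. Recall that, because all labels are distinct, every tuple of values from $A$ corresponds to a unique sequence $\s$ of variables (each value being $\val(Z)$ for a unique $Z$), and that a combination which is identically zero corresponds exactly to a trivial solution. So to secure Condition~7 I would add a third family of inequalities: for each proper sub-equation $\LL'$ of $\LL$, say with surviving coefficients indexed by a set $\emptyset \ne S \subsetneq \{1,\dots,\ell\}$, and for each sequence $\s$ of $|S|$ variables, form the combination $\lc_{\LL'}(\s)$ built from the coefficients of $\LL'$; whenever $\lc_{\LL'}(\s)$ is not identically zero, require $\lc_{\LL'}(\s) \ne 0$. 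Exactly as before one substitutes for the dependent edge variables to get a reduced combination $\rlc_{\LL'}(\s)$, and the only thing needing proof is that $\rlc_{\LL'}(\s)$ is not identically zero whenever $\lc_{\LL'}(\s)$ is not, so that the inequality can in fact be satisfied.

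The key step, and the place where the proper sub-equation hypothesis does the work, is this verification. Suppose $\rlc_{\LL'}(\s) \equiv 0$ but some dependent edge variable $Y_e^0$, with $e=(v,w)$, has nonzero net coefficient in $\lc_{\LL'}(\s)$. Substituting $Y_e^0 = (a_1 X_v + a_2 X_w + b_1 Y_e^1 + \cdots + b_{\ell-3} Y_e^{\ell-3})/b_0$ introduces nonzero coefficients for all of $X_v, X_w, Y_e^1, \dots, Y_e^{\ell-3}$, since the standard-form coefficients are nonzero; these can only be cancelled by further occurrences of the same variables in $\s$. Each free variable $Y_e^k$ is introduced only by $Y_e^0$ and so must occur directly, while cancelling the $X_v$ and $X_w$ terms forces $X_v$ and $X_w$ to occur as well (attempting to cancel an endpoint via another incident dependent variable would only demand still more distinct variables, never fewer). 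Hence a surviving dependent variable forces all $\ell$ members of an edge-set to appear, i.e. at least $\ell$ distinct variables. But $\LL'$ is a proper sub-equation, so $|S| \le \ell-1$ and $\s$ has at most $\ell-1$ distinct variables, a contradiction. Therefore no dependent variable survives, $\rlc_{\LL'}(\s)$ agrees with $\lc_{\LL'}(\s)$, and $\rlc_{\LL'}(\s)\equiv 0$ forces $\lc_{\LL'}(\s)\equiv 0$; equivalently, the only sub-equation solutions from $A$ we ever fail to forbid are trivial. This is in fact cleaner than the analogous Type~2 check in Lemma~\ref{construct-set}, where one has exactly $\ell$ slots and must exclude the edge-sequence case geometrically; here the pigeonhole on the number of slots settles it at once.

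Finally I would confirm that nothing else degrades. The number of proper sub-equations is at most $2^{\ell}$, a constant, and each contributes at most $L^{\ell}$ sequences, so the bound on the number $M$ of inequalities grows only by a polynomial factor and remains $\mathcal{O}(n^{2\ell})$; the greedy assignment of labels to increasing multiples of $b_0$ then carries over verbatim, as does the size bound $\max(A) = \mathcal{O}(n^{2\ell+2})$. Since we have only added constraints, a valid assignment still exists and Conditions~1--6 continue to hold, while the new inequalities guarantee that every solution from $A$ to every proper sub-equation is trivial, which is Condition~7. The main obstacle is the verification in the second paragraph; once that is in hand, the rest is bookkeeping inherited directly from Lemma~\ref{construct-set}.
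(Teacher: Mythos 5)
Your overall strategy is the same as the paper's: keep the construction of Lemma~\ref{construct-set} unchanged, add an inequality $\rlc(\s) \ne 0$ for every proper sub-equation $\LL'$ and every sequence $\s$ whose combination $\lc(\s)$ is not identically zero (identically zero combinations being exactly the trivial solutions), verify that the reduced combination cannot become identically zero when $\lc(\s)$ is not, and observe that the number of inequalities remains $\mathcal{O}(L^{\ell})$ so the greedy choice of labels and all size bounds carry over. The paper's proof is precisely this outline, with the verification dismissed as being ``exactly as in Lemma~\ref{construct-set}'', simplified by the remark that a sequence with at most $\ell-1$ entries can never be an edge-sequence.

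The gap is in your verification of the key step, and it occurs at $\ell = 3$. Your pigeonhole argument rests on the claim that a surviving dependent variable $Y_e^0$ forces all $\ell$ members of its edge-set into $\s$, justified by the parenthetical that cancelling an endpoint through another incident dependent variable ``would only demand still more distinct variables, never fewer''. For $\ell \ge 4$ this is correct: a second surviving dependent variable drags in its own $\ell-3 \ge 1$ free variables, giving at least $2(\ell-2) \ge \ell > \ell-1$ distinct variables, so counting wins either way. But for $\ell = 3$ there are no free edge variables at all, so a sequence $\s = (Y_e^0, Y_{e'}^0)$ with $e \ne e'$ both surviving occupies only $2 = \ell-1$ slots and demands nothing further; moreover a shared endpoint's coefficient really can cancel between the two substitutions without the vertex variable occurring. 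Concretely, for the equation $x_1 + 2x_2 = 2y_0$ and the sub-equation $2x_2 - 2y_0 = 0$, taking $e = (v,w)$ and $e' = (v,w')$ with $v$ first in the vertex ordering gives $\rlc(\s) = (X_v + 2X_w) - (X_v + 2X_{w'}) = 2X_w - 2X_{w'}$: the coefficient of $X_v$ vanishes, refuting ``can only be cancelled by further occurrences of the same variables''. What saves the lemma here is geometry, exactly the argument you claim to avoid: two distinct edges have at least three endpoints between them, so some vertex lies in exactly one of them, and its coefficient in $\rlc(\s)$ is a non-zero multiple of one surviving net coefficient with no remaining slot to cancel it; hence $\rlc(\s) \not\equiv 0$. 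Once you add this case (the length-two analogue of the paper's $\ell=3$, two-edge-variable analysis in Lemma~\ref{construct-set}), your proof is complete; the rest of your write-up --- the forcing of free variables, the count of inequalities, the unchanged label selection and bounds --- is sound.
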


\begin{proof}
The proof is very similar to the proof of Lemma~\ref{construct-set}.
In order to eliminate solutions to proper sub-equations, we have to
add extra inequalities to be satisfied by the members of $A$; that is,
for any sub-equation $\LL'$ with $k < \ell$ variables, and sequence
$\s = (Z_1,\ldots,Z_k) \in B^k$, we form the corresponding linear
combination $\lc(\s)$. If $\lc(\s)$ is identically zero,
this will correspond to a trivial solution to $\LL'$ and so can be
discarded. Otherwise, we form the reduced linear combination $\rlc(\s)$.
Then exactly as in Lemma~\ref{construct-set}, we can show that
$\rlc(\s)$ is not identically zero (in this case there are no edge-solutions
since $\LL'$ has at most $\ell-1$ variables). We then choose the set $A$
exactly as before, but now it will also satisfy the extra inequalities
which ensure that $A$ is $\LL$-proper-sub-equation-free.
The total number of inequalities is still
$\mathcal{O}(L^\ell)$, where $L$ is the number of variables.
\end{proof}

Using this construction in place of that in Lemma~\ref{construct-set},
we obtain the following $\NP$-completeness result:

\begin{cor} \label{np-completeness-sef}
Let $\LL$ be a linear equation
$c_1 x_1 + \cdots + c_{\ell} x_{\ell} = 0$, where $\ell \ge 3$
and the coefficients $c_i$ are all non-zero.
Then $\LL$-\textsc{Free Subset} is strongly $\NP$-complete. If the
coefficients of $\LL$ are not all of the same sign, then the
input set $A$ can be restricted to be a subset of $\mathbb{N}$.
Also the input set $A$ can be restricted to be
$\LL$-proper-sub-equation-free.
\end{cor}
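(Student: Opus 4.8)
The plan is to run exactly the reduction used in Theorem~\ref{np-completeness}, but to feed it the richer set produced by Lemma~\ref{construct-set-sef} rather than that of Lemma~\ref{construct-set}. The point is that nothing in the reduction itself changes except which construction supplies the set $A$.

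First I would note that the set $A = A_V \cup A_E$ delivered by Lemma~\ref{construct-set-sef} satisfies properties~1--6 verbatim, these being identical to the corresponding properties of Lemma~\ref{construct-set}, and in addition satisfies property~7. Next, I would observe that the proofs of Corollaries~\ref{ind-set-one} and~\ref{ind-set-two} appeal only to the decomposition of $A$ into vertex and edge labels together with the characterisation of the non-trivial solutions to $\LL$ as edge-solutions; that is, they invoke only properties~1--4. Since these hold equally for the set produced by Lemma~\ref{construct-set-sef}, both corollaries remain valid word for word with the new set. In particular, for any $k$, the graph $G$ has an independent set of cardinality $k$ if and only if $A$ contains an $\LL$-free subset of cardinality $|A_E|+k$.

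With this established, the reduction is immediate: given an instance $(G,k)$ of \textsc{Independent Set}, construct in polynomial time the instance $(A,|A_E|+k)$ of $\LL$-\textsc{Free Subset} via Lemma~\ref{construct-set-sef}. Correctness is exactly the transferred Corollary~\ref{ind-set-two}; membership in $\NP$ is clear, and strong $\NP$-hardness follows as in Theorem~\ref{np-completeness} using the polynomial bound of property~5, which keeps all integers polynomially bounded in the size of the instance. The two additional guarantees then read off directly from the construction: property~6 gives $A \subseteq \mathbb{N}$ whenever the coefficients of $\LL$ are not all of the same sign, and property~7 gives that $A$ is $\LL$-proper-sub-equation-free.

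There is no genuine obstacle to overcome here, since all the substantive work has already been carried out in establishing Lemma~\ref{construct-set-sef}. The only step requiring any care is confirming that Corollaries~\ref{ind-set-one} and~\ref{ind-set-two} transfer to the new construction, and this is routine precisely because their arguments never use anything beyond properties~1--4, which the two lemmas share.
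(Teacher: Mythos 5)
Your proposal is correct and is essentially the paper's own argument: the paper proves Corollary~\ref{np-completeness-sef} simply by substituting the construction of Lemma~\ref{construct-set-sef} for that of Lemma~\ref{construct-set} in the reduction of Theorem~\ref{np-completeness}, exactly as you do. Your explicit check that Corollaries~\ref{ind-set-one} and~\ref{ind-set-two} depend only on properties~1--4 (shared by both lemmas) makes precise what the paper leaves implicit, but it is the same route.
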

%
%
\subsection{Inhomogeneous Equations}
We now consider inhomogeneous equations of the form
\[
c_1 x_1 + \cdots + c_{\ell} x_{\ell} = K,
\]
where $K \ne 0$. Clearly such an equation $\LL$ can only have a solution if
$K$ is divisible by $\gcd(c_1,\ldots,c_{\ell})$. In this case, we can
prove an analogue of Lemma~\ref{construct-set-sef}, provided we restrict
attention to tripartite graphs.

%
It is not sufficient, in order to prove the results of
Section~\ref{sec:proportion}, to require that
the set constructed is proper-sub-equation-free.
We will prove a stronger result which requires that there are no
solutions in which some variables take values from some fixed
$\LL$-free set.
\begin{lemma} \label{construct-set-inh}
Let $\LL$ be the linear equation
$c_1 x_1 + \cdots + c_{\ell} x_{\ell} = K$, where $\ell \ge 3$
and the coefficients $c_i$ and constant $K$ are all non-zero, with
$\gcd(c_1 , \ldots , c_{\ell})$ a divisor of $K$.
Let $G = (V,E)$ be a tripartite graph, with the vertex set $V$
partitioned into three independent sets $V_1, V_2, V_3$.
Also let $S'$ be a fixed $\LL$-free set.
Then we can construct in polynomial time a
set $A \subseteq \mathbb{Z}$ with the following properties:
\begin{enumerate}
\item $A$ is the union of $\ell-1$ sets
$A_V, A_E^0 , A_E^1 , \ldots , A_E^{\ell-3}$, where
$A_V = \{x_v : v \in V\}$ and
$A_E^i = \{y_e^i : e \in E\}$ for each $i = 0, 1, \ldots , \ell-3$;
\item $|A| = |V| + (\ell-2)|E|$;
\item for every edge $e = (v,w)$, some permutation of
$(x_v,x_w, y_e^1 , \ldots , y_e^{\ell-3} , y_e^0)$ is a solution to
$\LL$; such a solution we call an edge-solution;
\item if $(z_1, \ldots , z_{\ell})$ is a non-trivial solution to
$\LL$, then $(z_1, \ldots , z_{\ell})$ is a permutation of
$(x_v,x_w, y_e^1 , \ldots , y_e^{\ell-3} , y_e^0)$ for some edge $e = (v,w)$;
\item $\max(A) = \mathcal{O}(|V|^{2\ell})$;
\item $A \subseteq \mathbb{N}$ unless all the coefficients of $\LL$
have the same sign.
\item $A \cup S'$ has no solutions to $\LL$ except for those in $A$.
\end{enumerate}
\end{lemma}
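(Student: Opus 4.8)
The plan is to run the construction of Lemma~\ref{construct-set-sef} almost verbatim, making three changes forced by the inhomogeneity. A helpful preliminary observation is that an inhomogeneous equation has \emph{no} trivial solutions at all: any partition witnessing triviality makes every coefficient-sum vanish, so the left-hand side would be $0\neq K$. Thus property~4 simply asks that the only solutions of $\LL$ in $A$ are edge-solutions, and there is no ``identically zero corresponds to a trivial solution'' escape hatch to worry about. The three changes are: the edge-equation now reads $\sum_i c_i(\text{edge-labels})=K$; the values must be chosen so that each dependent label is a genuine integer despite the constant $K$; and a new family of inequalities is needed to enforce property~6 about the fixed set $S'$.

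The integrality of the dependent labels is exactly where the tripartite hypothesis and the divisibility $\gcd(c_1,\dots,c_\ell)\mid K$ are spent. First I would fix a particular integer solution $p=(p_1,\dots,p_\ell)$ of $\LL$, which exists precisely because $\gcd(c_1,\dots,c_\ell)\mid K$. Then I would choose three distinct variable indices $\sigma(1),\sigma(2),\sigma(3)$, one designated for each class $V_1,V_2,V_3$, reserving the other $\ell-3$ indices for free edge labels. Every vertex of $V_t$ will always occupy slot $\sigma(t)$ in its edge-solutions, and for an edge $e=(v,w)$ with $v\in V_s$, $w\in V_t$ the dependent label $y_e^0$ will occupy the slot $\sigma(r)$ of the third class $V_r$. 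If every label is chosen congruent, modulo $M:=|c_{\sigma(1)}c_{\sigma(2)}c_{\sigma(3)}|$, to the entry of $p$ indexing its slot, then reducing the identity $\sum_i c_i p_i=K$ modulo $c_{\sigma(r)}$ shows at once that $y_e^0$ comes out integral. The point of tripartiteness is that the three classes match three distinct slots, so any edge --- spanning exactly two classes --- always leaves the third slot free for its dependent label; no proper two-colouring of $G$ is required and the residue choice is globally consistent.

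With integrality secured, I would choose the remaining labels greedily and increasingly from the appropriate residue classes modulo $M$, each new label avoiding the finitely many values forbidden by the inequalities already involving it, exactly as in Lemma~\ref{construct-set-sef}; the distinctness (Type~1) and no-spurious-solution (Type~2) inequalities are set up as before, with each $\lc(\s)$ now compared against $K$ rather than $0$ and reduced to $\rlc(\s)$ by substituting for the dependent labels. Positivity of each $y_e^0$, and hence property~5 when the coefficients are not all of one sign, is arranged by ordering the three classes so that the slot with an adverse sign always receives the larger labels, extending the ordering argument used for the homogeneous case. For property~6 I would add, for every assignment of a non-empty proper subset of the $\ell$ slots to elements of $S'$ with the rest assigned to variables of $B$, the inequality that the reduced combination of the $B$-variables differs from $K$ minus the constant contributed by the chosen elements of $S'$. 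Since $S'$ is fixed, this adds only $\mathcal{O}(|S'|^\ell L^\ell)=\mathcal{O}(L^\ell)$ inequalities, where $L=|B|$, so the inequality count is unchanged up to constants and the bound $\max(A)=\mathcal{O}(|V|^{2\ell})$ follows as before; a solution of $\LL$ in $A\cup S'$ meeting $S'$ but not lying wholly inside $S'$ would violate one of them, while a solution lying wholly inside $S'$ cannot exist because $S'$ is $\LL$-free (and has no trivial solutions).

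The step I expect to be the main obstacle is checking that none of these inequalities is \emph{identically} forced --- that after substituting for the dependent edge variables, the reduced combination is never identically equal to the relevant constant unless the sequence is a genuine edge-sequence. This requires repeating the $\ell=3$ versus $\ell\ge 4$ case analysis of Lemma~\ref{construct-set}, now tracking which of the three designated slots each dependent label occupies and treating the fixed elements of $S'$ as additional constants; the delicate part is to confirm that the residue restrictions modulo $M$, which integrality forces on the labels, still leave enough freedom to avoid every forbidden value, and that the sign ordering used for positivity is simultaneously compatible with those residue restrictions.
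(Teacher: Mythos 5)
Most of your outline coincides with the paper's proof: the observation that an inhomogeneous equation has no trivial solutions, the integrality mechanism (your anchor solution $p$ is exactly the paper's Bezout solution $(K'q_1,\dots,K'q_\ell)$ with $K'=K/g$, labels chosen congruent to it modulo the product of the three designated coefficients, tripartiteness guaranteeing that the third designated slot is always free for the dependent label), and the Type~2 inequalities extended over sequences in $(B\cup S')^{\ell}$ for Condition~7 are all as in the paper. The genuine gap is positivity (Condition~6). Your plan to ``order the three classes so that the slot with an adverse sign always receives the larger labels'' does not extend the homogeneous argument, because that argument relied on the freedom to match labels to coefficients by size: in standard form the smaller endpoint label goes with $a_1$, the larger with $a_2$, the free labels with the sorted $b_i$, and the standard form guarantees the negative coefficients are the small ones multiplying the small labels. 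Here the matching is rigid: a vertex occupies the same slot in every edge containing it, the dependent label's slot varies from edge to edge (it is whichever of the three designated slots neither endpoint uses), and the free slots $c_4,\dots,c_{\ell}$ have fixed signs. A mere ordering of the classes by sign is then insufficient. For example, for $-x_1+x_2+x_3+x_4=1$ with slot~1 designated to $V_1$, a $V_1V_2$-edge needs $x_v>x_w+y_e^1-1$, so the $V_1$-range must exceed the \emph{sum} of the $V_2$-range and the free-label range, not merely be larger than each; in general the per-slot ranges of values must themselves approximately satisfy the equation, which is a constraint on magnitudes, not on order.

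That is exactly the device the paper uses and your proposal lacks: since the coefficients are not all of one sign, for any $T>0$ there is a rational solution $(m_1,\dots,m_{\ell})$ of $\LL$ with every $m_i\ge T$. Each vertex and free edge label is then chosen within distance $r$ of the $m_i$ of its slot, in the correct residue class, taking $2r>(M+1)c_1c_2c_3$ so that the interval $[m_i-r,m_i+r]$ still contains at least $M+1$ admissible values, where $M$ is the number of inequalities. Any dependent label then differs from its $m_i$ by at most $R=\bigl(|K|+r\sum_i|c_i|\bigr)/\min_i|c_i|$, so choosing $T>R$ makes every label positive, and $T=\mathcal{O}(M)=\mathcal{O}(|V|^{2\ell})$ preserves the size bound. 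This single trick simultaneously settles positivity, the residue-compatibility worry you flag in your final paragraph, and Condition~5; without it, or an equivalent quantitative replacement for your ordering heuristic, the proposal does not establish Condition~6.
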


\begin{proof}
First note that since the equation $\LL$ is inhomogeneous, there are no
trivial solutions. As in Lemma~\ref{construct-set}, we can construct a
set of inequalities which ensure that the vertex and edge labels are
all distinct, and there are no solutions to the equation apart from
the edge solutions.

Let $g = \gcd(c_1,\ldots,c_{\ell})$ so that there exist integers
$q_1 , \ldots q_{\ell}$ with
\[
c_1 q_1 + \cdots + c_{\ell} q_{\ell} = g .
\]
Also set $K' = K/g$.

In Lemma~\ref{construct-set} we associated the vertices with the first
two coefficients of the equation, however in this case, in order to
ensure that the dependent edge labels are integers, we will require
that the labels associated with coefficient $c_i$ are congruent to
$K'q_i$ modulo $c_1c_2c_3$.
This means that each vertex must be associated
with a particular coefficient.

As in Lemma~\ref{construct-set}, we will have one vertex label for
each vertex. For each edge there will be $\ell-3$ free edges
labels and one dependent edge label. Exactly as before,
the set of variables is
$B = B_V \cup B_E$, where
$B_E = B_E^0 \cup B_E^1 \cup \cdots \cup B_E^{\ell-3}$,
$B_V = \{X_v, v \in V\}$ and
$B_E^i = \{ Y_e^i : e \in E\}$ for each $i = 0 , 1 , \ldots , \ell-3$.
The variable $X_v$ is associated with the vertex $v$, and the
variables $Y_e^0, \ldots , Y_e^{\ell-3}$ with the edge~$e$.

We now consider the constraints that the variables must satisfy.

First, for each edge $e = (v,w)$, we have a set of variables
corresponding to the edge and its endpoints, that is
$\{X_v, X_w, Y_e^1, \ldots , Y_e^{\ell-3}, Y_e^0\}$. We will call such
a set an edge-set of variables.

For any sequence
$\s = (Z_1, \ldots, Z_{\ell}) \in B^{\ell}$ of
variables, we say that $\s$ is an edge-sequence if the set
$\{Z_1, \ldots, Z_{\ell}\}$ is an edge-set.

As before, we have the following inequalities:

Type~1: For any distinct pair $\s = (Z,Z') \in B^2$, we have the linear
combination $\lc(\s) = Z - Z' \ne 0$.

Type~2: For any sequence
$\s = (Z_1, \ldots, Z_{\ell}) \in (B \cup S')^{\ell}$ which is not an
edge-sequence, we have
$\lc(\s) = c_1 Z_1 + \cdots + c_{\ell} Z_{\ell} - K \ne 0$.

%
%

The variables in an edge-set must also satisfy the equation in some
order. To ensure this, we associate the three parts $V_1, V_2, V_3$ of
$V$ with coefficients $c_1, c_2, c_3$. Thus for any edge
$e = (v, w)$, the vertices $v, w$ will be in distinct parts,
so we associate these with corresponding coefficients, and the
dependent edge variable $y_e^0$ with the third of the coefficients
$c_1, c_2, c_3$. The free edge variables
$y_e^1 , \ldots , y_e^{\ell-3}$ are associated with
$c_4 , \ldots , c_{\ell}$. Thus if $v,w$ are in parts $p(v),p(w)$
respectively and $p(e)$ is the unique element of
$\{1,2,3\} \setminus \{p(v),p(w)\}$ we have the equation
\begin{equation} \label{eqn-inh-edge-equation}
c_{p(v)} X_v + c_{p(w)} X_w + c_{p(e)} Y_e^0 + c_4 Y_e^1 + \cdots + c_{\ell}
Y_e^{\ell-3} = K .
\end{equation}
Now in each of the linear combinations $\lc(\s)$,
we substitute for
each dependent edge variable $Y_e^0$ using these equations,
and collect terms to obtain a reduced
(rational) linear combination $\rlc(\s)$
of the vertex variables and free edge variables in the set
$B_V \cup B_E^1 \cup \cdots \cup B_E^{\ell-3}$.

Then, as in Lemma~\ref{construct-set}, none of the reduced combinations
is identically zero. Hence we can choose values for each of the vertex
labels and free edge labels so that all of the inequalities are
satisfied, and from these determine the dependent edge labels. We need
to check, however, that the dependent edge labels are actually
integers. As above, we choose the values so that a label associated
with coefficient $c_i$ is congruent to $K'q_i$ modulo $c_1c_2c_3$.

So suppose that the dependent edge label $y_e^0$ of some edge $e = (v,w)$
is associated with coefficient $c_{p(e)}$ (where $p(e) \in \{1,2,3\}$).
Then from Equation~\eqref{eqn-inh-edge-equation}
we have
\begin{align*}
K - (c_{p(v)} x_v + c_{p(w)} x_w + \sum_{i \ge 4} c_i
y_e^{i-3})
&\equiv K - \sum_{i \ne p(e)} c_i K' q_i \pmod{c_1c_2c_3}
\\
&\equiv K - K' (g - c_{p(e)} q_{p(e)}) \pmod{c_1c_2c_3}
\\
&\equiv  K' c_{p(e)} q_{p(e)} \pmod{c_1c_2c_3}
\\
&\equiv 0 \pmod{c_{p(e)}} .
\end{align*}
Hence $K - (c_{p(v)} x_v + c_{p(w)} x_w + \sum_{i \ge 4} c_i y_e^{i-3})$ is
divisible by $c_{p(e)}$ and so $y_e^0$, which is given by
\[
y_e^0 = (K - (c_{p(v)} x_v + c_{p(w)} x_w +
\sum_{i \ge 4} c_i y_e^{i-3}))/c_{p(e)} ,
\]
is an integer as required.

%
Note that the number $M$ of inequalities is
$\mathcal{O}(|A|^{\ell})$, so the elements of $A$ can be chosen to be
at most $\mathcal{O}(|A|^{\ell})=\mathcal{O}(|V|^{2\ell})$.

Finally we need to show that if not all of the coefficients
$c_1, \ldots , c_{\ell}$ have the same sign, then we can choose all
the labels to be positive.

Note first that for any $T > 0$, we can find a rational solution
\[
c_1 m_1 + \ldots + c_{\ell} m_{\ell} = K
\]
with each $m_i \ge T$. To see this, choose two coefficients
$c_I < 0$, $c_J > 0$ and choose each $m_i, i \ne I,J$ so that
$m_i \ge T$. Then if $\sum_{i \ne I,J} c_i m_i = N$, we have
$m_J = (K - N - c_I m_I)/c_J$ and it is clear that we can choose
$m_I, m_J$ both at least $T$.

Now if
\[
c_1 x_1 + \ldots + c_{\ell} x_{\ell} = K
\]
is a solution of $\LL$ and for some $I$, we have $|x_i - m_i| \le r$
for each $i \ne I$, then it follows that
\[
|x_I - m_I| \le (|K| + \sum_{i \ne I} |c_i| |x_i - m_i|)/|c_I| \le
(|K| + r \sum_{i \ne I} |c_i|)/|c_I| .
\]
Thus by setting
$R = (|K| + r\sum_i |c_i|)/(\min_i |c_i|)$, we have
$|x_I - m_I| \le R$. To ensure that all the $x_i$ are positive,
it suffices to ensure that $T > R$. We will exploit this by choosing
all the vertex and free edge labels to satisfy $|x_i-m_i|\leq r$. Then
the dependent edge labels are guaranteed to be positive.

Since there are $M$ inequalities to be satisfied, each label must
avoid at most $M$ values. For a label associated with the coefficient
$c_i$, we want to choose it to be positive and congruent
to $K'q_i$ modulo $c_1c_2c_3$.
Thus if $2r > (M+1)c_1c_2c_3$, then the
range $[m_i - r, m_i + r]$ includes at least $M+1$ integers congruent
to $K'q_i$ modulo $c_1c_2c_3$, so we can choose the label from among
these ensuring that it differs from $m_i$ by at most $r$. Hence the
dependent label will differ from one of $m_1, m_2, m_3$ by at most
$R$, and so will be positive provided $T > R$.
We can achieve this with a value of $T$ which is
$\mathcal{O}(M) = \mathcal{O}(|V|^{2\ell})$, hence the elements of $A$
can be positive and polynomially-sized.
\end{proof}

As above we obtain the following:

\begin{cor} \label{np-completeness-inh}
Let $\LL$ be a linear equation
$c_1 x_1 + \cdots + c_{\ell} x_{\ell} = K$, where $\ell \ge 3$
and the coefficients $c_i$ and constant $K$ are all non-zero, with
$\gcd(c_1 , \ldots , c_{\ell})$ a divisor of $K$.
Then $\LL$-\textsc{Free Subset} is strongly $\NP$-complete. If the
coefficients of $\LL$ are not all of the same sign, then the
input set $A$ can be restricted to be a subset of $\mathbb{N}$.
\end{cor}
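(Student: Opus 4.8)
The plan is to mirror exactly the development of the homogeneous case, replacing the role of Lemma~\ref{construct-set} by Lemma~\ref{construct-set-inh} and reducing from \textsc{Independent Set} restricted to tripartite graphs. First I would record that $\LL$-\textsc{Free Subset} lies in $\NP$: given a candidate subset $A' \subseteq A$, one checks whether any of the $|A'|^{\ell}$ sequences in $A'^{\ell}$ satisfies $\LL$, and since $\ell$ is fixed this is polynomial. Because the integers produced are polynomially bounded (property~5 of Lemma~\ref{construct-set-inh}), this will yield \emph{strong} $\NP$-completeness once hardness is established.

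Next I would set up the analogues of Corollaries~\ref{ind-set-one} and~\ref{ind-set-two} for the set $A = A_V \cup A_E$ of Lemma~\ref{construct-set-inh} (taking the fixed $\LL$-free set $S'$ to be empty, which is permissible). Their proofs carry over essentially verbatim, since they use only properties~2--4 of the construction: that the labels are distinct, that each edge yields an edge-solution, and that every solution to $\LL$ is a permutation of the labels of a single edge together with its endpoints. Note that, because $\LL$ is inhomogeneous, it has no trivial solutions, so ``non-trivial solution'' may simply be read as ``solution'' throughout. Thus, for a tripartite graph $G$ equipped with a tripartition and any $k$, the instance $(A, |A_E| + k)$ produced by Lemma~\ref{construct-set-inh} satisfies: $G$ has an independent set of size $k$ if and only if $A$ has an $\LL$-free subset of size $|A_E| + k$. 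Property~6 of the lemma supplies the refinement that $A \subseteq \mathbb{N}$ whenever the coefficients are not all of the same sign.

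This reduces \textsc{Independent Set on tripartite graphs} to $\LL$-\textsc{Free Subset}, so the only remaining point --- and the one I expect to be the real obstacle --- is to argue that \textsc{Independent Set} is strongly $\NP$-complete already on tripartite graphs supplied with an explicit tripartition (ordinary \textsc{Independent Set} is $\NP$-complete, but Lemma~\ref{construct-set-inh} demands a tripartite input). I would obtain this from \textsc{Independent Set} on planar graphs, which is known to be $\NP$-complete, via the classical subdivision trick: let $G^{*}$ be obtained from such a $G$ by subdividing every edge exactly twice. A result of Poljak gives $\alpha(G^{*}) = \alpha(G) + |E(G)|$, where $\alpha$ denotes the independence number, so independent sets are controlled exactly. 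Moreover $G^{*}$ is planar (subdivision preserves planarity) and has girth at least $9$ (each cycle of $G$, of length $\ge 3$, is tripled in length), hence is triangle-free; by Gr\"otzsch's theorem it is therefore $3$-colourable, i.e.\ tripartite, and a tripartition can be found in polynomial time. All quantities involved are polynomially bounded.

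Combining these steps gives the reduction chain \textsc{Independent Set on planar graphs} $\to$ \textsc{Independent Set on tripartite graphs} $\to$ $\LL$-\textsc{Free Subset}, establishing that $\LL$-\textsc{Free Subset} is strongly $\NP$-complete, with the input restricted to $\mathbb{N}$ when the coefficients are not all of the same sign. The delicate part is purely the tripartite restriction forced by the construction in Lemma~\ref{construct-set-inh}; everything else is a direct transcription of the homogeneous argument behind Theorem~\ref{np-completeness}.
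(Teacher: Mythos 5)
Your proposal is correct, and the main body of it --- reduce \textsc{Independent Set} on tripartite graphs to $\LL$-\textsc{Free Subset} via Lemma~\ref{construct-set-inh} with $S'=\emptyset$, transporting the arguments of Corollaries~\ref{ind-set-one} and~\ref{ind-set-two} (which indeed only use properties 2--4, and simplify since an inhomogeneous equation has no trivial solutions), and invoke the polynomial bound on $\max(A)$ for strong $\NP$-completeness --- is exactly the paper's proof. Where you diverge is in the one ingredient you correctly identified as the crux: hardness of \textsc{Independent Set} on tripartite graphs \emph{with a given tripartition}. The paper dispatches this in one line by citing Garey, Johnson and Stockmeyer for $\NP$-completeness on \emph{cubic planar} graphs, and observing that such graphs are tripartite (implicitly via Brooks' theorem, which also yields the $3$-colouring in polynomial time). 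You instead start from planar \textsc{Independent Set}, apply Poljak's double-subdivision trick to get $\alpha(G^{*}) = \alpha(G) + |E(G)|$ on a planar triangle-free graph, and then invoke Gr\"otzsch's theorem for $3$-colourability. Both routes are sound; the paper's is shorter and rests on lighter machinery (Brooks rather than Gr\"otzsch, and no graph transformation), while yours is more self-contained in that it makes the exactness of the independence-number correspondence and the construction of the tripartition fully explicit --- though you should note that an \emph{algorithmic} version of Gr\"otzsch's theorem (polynomial-time $3$-colouring of triangle-free planar graphs) is needed and is known, whereas the analogous algorithmic statement for Brooks' theorem is more elementary.
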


\begin{proof}
The proof is similar to that of Theorem~\ref{np-completeness}, but the
input is restricted to tripartite graphs. We apply
Lemma~\ref{construct-set-inh} to obtain the
required set~$A$. Note that \textsc{Independent Set}
is $\NP$-complete for cubic planar graphs~\cite{garey-et-al-1976},
so in particular is
$\NP$-complete for tripartite graphs.
\end{proof}

\section{Approximation}
We now consider the complexity of determining the size of the largest
solution-free subset of a set. The material in this section is really
just a generalisation of the corresponding result given by Meeks and
Treglown~\cite{meeks-treglown}, so we follow their treatment closely.

We consider the following computational problem:

\begin{framed}
\noindent \textsc{Maximum $\LL$-Free Subset}\newline
\textit{Input:} A finite set $A \subseteq \mathbb{Z}$.\newline
\textit{Question:} What is the cardinality of the largest $\LL$-free
subset $A' \subseteq A$?
\end{framed}

If $\Pi$ is an optimisation problem such as the one above, then for
any instance $I$ of $\Pi$, we denote by
$\opt(I)$ the value of the optimal solution to $I$ (in this example
above, the cardinality of the largest solution-free subset).
An approximation algorithm $\mathcal{A}$ for $\Pi$ has
\emph{performance ratio $\rho$}
if for any instance $I$ of $\Pi$, the value $\mathcal{A}(I)$ given
by the algorithm $\mathcal{A}$ satisfies
\[
1 \leq \frac{\opt(I)}{\mathcal{A}(I)} \leq \rho.
\]
%

An algorithm $\mathcal{A}$ is a
\emph{polynomial-time approximation scheme (PTAS)} for $\Pi$ if it
takes as input an instance $I$ of $\Pi$ and a real number $\eps > 0$,
runs in time polynomial in the size of $I$ (but not necessarily in
$1/\eps$) and outputs a value $\mathcal{A}(I)$ such that
\[
1 \leq \frac{\opt(I)}{\mathcal{A}(I)} \leq 1 + \eps.
\]
We show in this section that for any linear equation $\LL$
with at least three variables, there can be no PTAS for the problem
\textsc{Maximum $\LL$-Free Subset} unless $\P = \NP$.

The complexity class $\APX$ contains all optimisation problems (whose
decision version belongs to $\NP$) which can be approximated within
some constant factor in polynomial time; this class includes problems
which do not admit a PTAS unless $\P=\NP$, so an optimisation problem
which is hard for $\APX$ does not have PTAS unless $\P=\NP$.
In order to show that a problem
is $\APX$-hard (and so does not admit a PTAS unless $\P=\NP$), it
suffices to give a PTAS reduction from another $\APX$-hard problem.

\begin{adef}
Let $\Pi_1$ and $\Pi_2$ be maximisation problems.  A \emph{PTAS
reduction from $\Pi_1$ to $\Pi_2$} consists of three polynomial-time
computable functions $f$, $g$ and $\alpha$ such that:
\begin{enumerate}
\item for any instance $I_1$ of $\Pi_1$ and any constant error
parameter $\eps$, $f$ produces an instance $I_2 = f(I_1,\eps)$
of $\Pi_2$;
\item if $\eps > 0$ is any constant and $y$ is any solution to
$I_2$ such that $\frac{\textrm{opt}(I_2)}{|y|} \leq \alpha(\eps)$,
then $x = g(I_1,y,\eps)$ is a solution to $I_1$ such that
$\frac{\textrm{opt}(I_1)}{|x|} \leq 1 + \eps$.
\end{enumerate}
\end{adef}

It was shown by Alimonti and Kann~\cite{alimonti-kann2000} that the problem
(called \textsc{Max IS-3}) of determining
the maximum size of an independent set in a graph of maximum degree 3
is $\APX$-hard.
\begin{lemma}
Let $\LL$ be a linear equation
$c_1 x_1 + \cdots + c_{\ell} x_{\ell} = K$, where $\ell \ge 3$
and the coefficients $c_i$ are all non-zero, and
$\gcd(c_1 , \ldots , c_{\ell})$ is a divisor of $K$.
Then there is a PTAS reduction from \textsc{Max IS-3} to
\textsc{Maximum $\LL$-Free Subset}.
\end{lemma}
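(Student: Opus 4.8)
The plan is to construct a PTAS reduction $(f, g, \alpha)$ from \textsc{Max IS-3} to \textsc{Maximum $\LL$-Free Subset}, closely following the strategy suggested by the earlier $\NP$-completeness results but now tracking the \emph{sizes} of the solutions rather than merely their existence. Given an instance $G$ of \textsc{Max IS-3}, the function $f$ will apply the appropriate construction lemma to produce the set $A = A_V \cup A_E$: for a homogeneous equation $\LL$ I would use Lemma~\ref{construct-set-sef}, and for an inhomogeneous equation (after checking the divisibility condition on $K$, which holds by hypothesis) I would first 3-colour $G$ — legitimate since a graph of maximum degree $3$ is either $3$-colourable or is $K_4$, and in any case we may assume tripartiteness — and then apply Lemma~\ref{construct-set-inh} with $S' = \emptyset$. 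The key structural fact supplied by these lemmas is that, by Corollary~\ref{ind-set-one} and Corollary~\ref{ind-set-two}, there is a correspondence between independent sets of $G$ and $\LL$-free subsets of $A$ containing all of $A_E$, with an independent set of size $k$ corresponding to an $\LL$-free subset of size $|A_E| + k$.

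The heart of the argument is to relate $\opt(A)$, the size of the largest $\LL$-free subset, to $\opt(G)$, the maximum independent set size. The clean identity I would aim for is
\[
\opt(A) = |A_E| + \opt(G),
\]
which follows because any $\LL$-free subset can be converted, without decreasing its size, into one that contains all of $A_E$ (this is exactly the swapping argument in the proof of Corollary~\ref{ind-set-two}: whenever an edge label $y_e^i$ is missing, either an endpoint label is also missing so we may add $y_e^i$ freely, or we exchange an endpoint label for $y_e^i$), and such subsets correspond bijectively to independent sets. The function $g$ then takes an $\LL$-free subset $y$ of $A$, first normalises it to contain all of $A_E$ by the same swapping procedure (this is polynomial-time and does not decrease $|y|$), and returns the independent set $x = \{\, v : x_v \in y \,\}$ of size $|y| - |A_E|$.

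The delicate quantitative point — and the step I expect to be the main obstacle — is that $|A_E|$ is a large additive constant relative to the interesting quantity $\opt(G)$, so the ratio $\opt(A)/|y|$ is close to $1$ for \emph{every} feasible $y$, and a naive choice of $\alpha$ would fail to transfer any approximation guarantee. The resolution is the standard device used for such amortised/padded reductions: since $G$ has maximum degree $3$, we have $|E| \le \tfrac{3}{2}|V|$, and a maximum independent set satisfies $\opt(G) \ge |V|/4$, so $|A_E| = (\ell-2)|E|$ is bounded by a constant multiple of $\opt(G)$, namely $|A_E| \le c_0\,\opt(G)$ for an explicit constant $c_0$ depending only on $\ell$. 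Consequently an error $\eps$ in the $\LL$-free problem inflates only the $\opt(G)$ part of the solution, and a multiplicative $(1+\eps')$-approximation for \textsc{Maximum $\LL$-Free Subset} translates into a $(1 + (1+c_0)\eps')$-approximation for \textsc{Max IS-3}. I would therefore set $\alpha(\eps) = 1 + \eps/(1 + c_0)$, verify that if $\opt(A)/|y| \le \alpha(\eps)$ then $\opt(G)/|x| \le 1 + \eps$ by substituting $\opt(A) = |A_E| + \opt(G)$, $|y| \ge |A_E| + |x|$, and the bound $|A_E| \le c_0\,\opt(G)$, and confirm that $f$, $g$, and $\alpha$ are all polynomial-time computable, which completes the verification that the triple is a valid PTAS reduction.
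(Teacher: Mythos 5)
Your skeleton is essentially the paper's: the same $f$ (Lemma~\ref{construct-set} / Lemma~\ref{construct-set-inh} with $S'=\emptyset$), the same normalisation of a solution via the swapping argument of Corollary~\ref{ind-set-two}, the identity $\opt(A)=|A_E|+\opt(G)$, and the degree-3 bounds $|E|\le\tfrac32|V|$ and the $|V|/4$ lower bound. (Your explicit handling of tripartiteness for the inhomogeneous case, via Brooks' theorem, is a point the paper glosses over.) However, there is a genuine gap in the verification step. You bound the padding against the \emph{optimum}, $|A_E|\le c_0\,\opt(G)$ with $c_0=6(\ell-2)$, using $\opt(G)\ge|V|/4$; what the arithmetic actually needs is a bound against the \emph{returned} solution, $|A_E|\le c_0|x|$. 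With your $\alpha(\eps)=1+\eps/(1+c_0)$, the hypothesis $\opt(A)/|y|\le\alpha(\eps)$ yields
\[
\opt(G)\;\le\;|x|+\tfrac{\eps}{1+c_0}\bigl(|A_E|+|x|\bigr)\;\le\;|x|+\tfrac{\eps}{1+c_0}(c_0+1)\opt(G)\;=\;|x|+\eps\,\opt(G),
\]
which gives only $\opt(G)/|x|\le 1/(1-\eps)$, strictly larger than $1+\eps$. This is not mere slack in the estimate: take $G$ to be a disjoint union of $q$ copies of $K_4$ (so $|A_E|=c_0\,\opt(G)$ exactly), and let $y$ consist of $A_E$ together with the vertex labels of an independent set of size $\bigl\lceil q(1+c_0-c_0\eps)/(1+c_0+\eps)\bigr\rceil$. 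For large $q$ this $y$ satisfies $\opt(A)/|y|\le\alpha(\eps)$, yet your $g$ returns exactly that independent set, whose ratio to $\opt(G)=q$ tends to $(1+c_0+\eps)/(1+c_0-c_0\eps)>1+\eps$. So the implication you claim to verify is false as stated. (A minor related slip: your inequality ``$|y|\ge|A_E|+|x|$'' points the wrong way; after normalisation the facts are $|A_E|+|x|=|\widetilde{B}|\ge|y|$.)

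The missing idea is precisely the step the paper builds into $g$: after forcing $A_E\subseteq\widetilde{B}$, greedily extend $\widetilde{B}$ to a \emph{maximal} $\LL$-free subset of $A$. Then the output $I$ is a maximal independent set in a graph of maximum degree $3$, so $|I|\ge|V|/4$, hence $|A_E|\le c_0|I|$, and the same substitution gives $\opt(G)\le|x|+\tfrac{\eps}{1+c_0}(c_0+1)|x|=(1+\eps)|x|$, exactly as required; note this fix also destroys the counterexample above, since maximalising adds a vertex from every untouched $K_4$. Alternatively, you could keep your non-maximal $g$ and instead shrink the error function to $\alpha(\eps)=1+\eps/\bigl((1+\eps)(1+c_0)\bigr)$, which converts your $1/(1-\cdot)$ bound into exactly $1+\eps$ and is still a legitimate PTAS reduction; but as written, with your stated $\alpha$, the proposal does not go through.
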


\begin{proof}
The proof follows closely the proof of Lemma~7 in~\cite{meeks-treglown}.
We define the functions $f$, $g$ and $\alpha$ as follows.

First, we let $f$ be the function which, given an instance $G$ of
\textsc{Max IS-3} (where $G=(V,E)$) and any $\eps > 0$, outputs
the set $A = A_V \cup A_E \subseteq \mathbb{Z}$
described in  \protect{Lemma~\ref{construct-set}} in the homogeneous case or
\protect{Lemma~\ref{construct-set-inh}} (with $S' = \emptyset$) in the inhomogeneous case;
we know from these lemmas
that we can construct this set in polynomial time.

Next suppose that $B$ is an $\LL$-free subset in $A$.  We can
construct in polynomial time a set $\widetilde{B}$, with
$|\widetilde{B}| \geq |B|$, such that
\begin{enumerate}
\item $A_E \subseteq \widetilde{B}$; and
\item $\widetilde{B}$ is a maximal $\LL$-free subset of $A$.
\end{enumerate}
If $B$ fails to satisfy the first condition, we can use the method of
Corollary~\ref{ind-set-two} to obtain a set with this property,
and if the resulting set is not maximal we can add elements greedily
until this condition is met.  We now define $g$ to be the function
which, given an $\LL$-free set $B \subseteq A$ and any $\eps > 0$,
outputs the set $\{ v : x_v \in \widetilde{B} \setminus A_E\}$.

Finally, we define $\alpha$ by
$\alpha(\eps) = 1 + \frac{\eps}{6(\ell-2)+1}$.
Denote by $\opt(G)$ the
cardinality of the maximum independent set in $G$, and by $\opt(A)$
the cardinality of the largest $\LL$-free subset in $A$.  Note that
$\opt(A) = \opt(G) + (\ell-2)|E|$.  To complete the proof, it suffices to
show that whenever $B$ is an $\LL$-free subset in $A$ such
that $\frac{\opt(A)}{|B|} \leq \alpha(\eps) = 1 +
\frac{\eps}{6(\ell-2)+1}$, we have $\frac{\opt(G)}{|I|} \leq 1 +
\eps$, where $I = \{ v : x_v \in \widetilde{B} \setminus A_E\}$.
Observe that

\begin{align*}
\frac{\opt(A)}{|B|} &\leq 1 + \frac{\eps}{6(\ell-2)+1} \\
\Rightarrow \frac{\opt(A)}{|\widetilde{B}|} &\leq 1 +
\frac{\eps}{6(\ell-2)+1} \\
\Rightarrow \frac{(\ell-2)|E| + \opt(G)}{(\ell-2)|E| + |I|} & \leq 1 +
\frac{\eps}{6(\ell-2)+1} \\
\Rightarrow \frac{(\ell-2)|E| + \opt(G)}{|I|} & \leq \left(1 +
\frac{\eps}{6(\ell-2)+1}\right)\left(\frac{(\ell-2)|E| + |I|}{|I|}\right) \\
\Rightarrow \frac{\opt(G)}{|I|} & \leq \left(1 +
\frac{\eps}{6(\ell-2)+1}\right)\frac{(\ell-2)|E|}{|I|} + \left(1 +
\frac{\eps}{6(\ell-2)+1}\right) - \frac{(\ell-2)|E|}{|I|} \\
& = \frac{\eps}{6(\ell-2)+1}\frac{(\ell-2)|E|}{|I|} + 1 +
\frac{\eps}{6(\ell-2)+1}.
\end{align*}
Since we know that $|E| \leq \frac{3|V|}{2}$ and, by our assumptions
on maximality of $\widetilde{B}$ and hence $I$, we also know that $|I|
\geq \frac{|V|}{4}$, it follows that $\frac{|E|}{|I|} \leq 6$.  We can
therefore conclude that
\[
\frac{\opt(A)}{|B|} \leq 1 + \frac{\eps}{6(\ell-2)+1}
\Rightarrow \frac{\opt(G)}{|I|} \leq 6(\ell-2)\frac{\eps}{6(\ell-2)+1} + 1
+ \frac{\eps}{6(\ell-2)+1} = 1 + \eps,
\]
as required.
\end{proof}

As a corollary, we obtain the following:
\begin{thm}
Let $\LL$ be a linear equation with at least three variables.
Then \textsc{Maximum $\LL$-Free Subset} is $\APX$-hard.
\end{thm}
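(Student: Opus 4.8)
The plan is to obtain this statement as an immediate corollary of the preceding lemma. The core of the argument is already in place: that lemma exhibits polynomial-time functions $f$, $g$, $\alpha$ constituting a PTAS reduction from \textsc{Max IS-3} to \textsc{Maximum $\LL$-Free Subset} whenever the coefficients of $\LL$ are all non-zero and $\gcd(c_1,\ldots,c_\ell)$ divides $K$. Since \textsc{Max IS-3} is $\APX$-hard by the theorem of Alimonti and Kann~\cite{alimonti-kann2000}, and since (as recorded earlier) a PTAS reduction from an $\APX$-hard problem establishes $\APX$-hardness of the target, this settles the theorem for every equation meeting the lemma's two extra hypotheses. What remains is to bridge the gap between those hypotheses and the single hypothesis of the theorem, namely that $\LL$ has at least three variables.

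First I would dispose of zero coefficients, reading ``at least three variables'' as at least three terms with non-zero coefficient. I would argue that any term $c_j x_j$ with $c_j = 0$ may simply be deleted: writing $\LL^\ast$ for the equation obtained by omitting all such terms, I claim that for every non-empty $A \subseteq \mathbb{Z}$ the set $A$ is $\LL$-free if and only if it is $\LL^\ast$-free, so that \textsc{Maximum $\LL$-Free Subset} and \textsc{Maximum $\LL^\ast$-Free Subset} return the same value on every input. The key observation is that a variable with zero coefficient may be appended to, or stripped from, any solution without affecting whether the equation holds, and that a singleton class $\{j\}$ with $c_j = 0$ trivially meets condition~(ii) of the definition of a trivial solution; hence a solution of $\LL$ is trivial exactly when the corresponding solution of $\LL^\ast$ is, so non-trivial solutions of the two equations correspond (using non-emptiness of $A$ to supply a value for the deleted variable in one direction). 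Thus $\LL^\ast$ has at least three terms, all with non-zero coefficient, and the problem for $\LL$ coincides with the problem for $\LL^\ast$.

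Finally I would address the divisibility condition. If $\LL^\ast$ is homogeneous then $K = 0$ and $\gcd(c_1,\ldots)$ divides $K$ automatically, so the lemma applies. If $\LL^\ast$ is inhomogeneous and $\gcd(c_1,\ldots)$ fails to divide $K$, then $\LL^\ast$ has no solutions at all, every subset of $A$ is trivially $\LL^\ast$-free, and the problem always returns $|A|$; this degenerate case is solvable in polynomial time and is implicitly excluded from the claim (equivalently, one assumes throughout that $\LL$ is solvable over $\mathbb{Z}$). In the remaining case $\gcd(c_1,\ldots)$ divides $K$, the lemma applies directly, and $\APX$-hardness follows.

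The only place requiring genuine care is the zero-coefficient reduction of the second paragraph: one must verify both directions of the equivalence of $\LL$-freeness and $\LL^\ast$-freeness, tracking the triviality condition so that a non-trivial solution is never converted into a trivial one (nor the reverse) when a free variable is added or removed, and one must fix the reading of ``at least three variables'' so that $\LL^\ast$ still has at least three terms. Everything else is a direct appeal to the lemma and to the fact that a PTAS reduction from an $\APX$-hard problem yields $\APX$-hardness.
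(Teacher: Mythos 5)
Your proposal matches the paper's own derivation: the paper states this theorem as an immediate corollary of the preceding PTAS-reduction lemma together with the $\APX$-hardness of \textsc{Max IS-3} due to Alimonti and Kann, which is exactly your first paragraph. Your additional bookkeeping --- deleting zero-coefficient terms while checking that triviality of solutions is preserved in both directions, and observing that when $\gcd(c_1,\ldots,c_\ell)$ fails to divide $K\neq 0$ the equation has no solutions so the problem is trivial and must be read as excluded --- is correct and in fact more careful than the paper, which leaves these edge cases implicit.
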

\section{$\LL$-free subsets with a given proportion of
elements}\label{sec:proportion}
In this section we consider the problem of deciding if a set has
a solution-free subset containing a given proportion of its elements.

Let $\LL$ be the equation
\[
c_1 x_1 + \cdots + c_{\ell} x_{\ell} = K
\]
and let $C = \sum_i^{\ell} c_i$.

If $C \ne 0$ and $C$ divides $K$, let $S_{\LL} = \{K/C\}$ (otherwise
$S_{\LL}$ is empty). Then note that $S_{\LL}$ is an integer set
that has no non-empty subset which is $\LL$-free.
For this reason we need to exclude this value.
%
Let $\mathcal{C}(\LL)$ denote the set of all positive reals
$\lambda$ so that every finite set
$Z\subseteq \mathbb{Z}\setminus S_{\LL}$  contains
an $\LL$-free subset of size greater than $\lambda |Z|$.

Define
\[
\kappa (\LL)=\sup (\mathcal{C}(\LL)).
\]
It follows from the results of Erd\H{o}s~\cite{erdos1965}
and Eberhard, Green and Manners~\cite{eberhard-green-manners-2014}
that for the sumfree equation
$\LL : x + y = z$, we have $\kappa(\LL) = 1/3$.

Given any linear equation $\LL$  and a constant $\eps$ satisfying
$0<\eps<1$, we define the following problem.
\begin{framed}
\noindent $\eps$-$\LL$-\textsc{Free Subset}\newline
\textit{Input:} A finite set $A \subseteq \mathbb{Z} \setminus S_{\LL}$.\newline
\textit{Question:} Does there exist an $\LL$-free subset $A' \subseteq
A$ such that $|A'|\geq \eps|A|$?
\end{framed}
Note that if $\eps \leq \kappa (\LL)$, then it follows from the
definition of $\kappa(\LL)$ that every instance of
$\eps$-$\LL$-\textsc{Free Subset} is a yes-instance, and so the
problem is trivially in $\P$ in this case.
%
%
%
%
\subsection{Hardness of $\eps$-$\LL$-Free Subset}\label{62}

In this section we show that
$\eps$-$\LL$-\textsc{Free Subset} is strongly $\NP$-complete whenever
$\kappa (\LL) < \eps < 1$.
Given a set $X \subseteq \mathbb{Z}$ and $y \in
\mathbb{N}$, we write $yX$ as shorthand for $\{yx: x \in X\}$,
and $X + y$ for $\{x + y : x \in X\}$.

Recall that for a homogeneous equation $\LL$,
we say that a set
$S$ is $\LL$-sub-equation-free if and only if $S$ does not contain a
non-trivial solution to $\LL$ or any of its sub-equations.

\begin{lemma}\label{sub-equation-free}
Let $\LL$ be a translation invariant homogeneous linear equation,
and let $S$ be an $\LL$-free set of integers.

Then for all except a finite number of positive integers $\alpha$, the
set $S+\alpha$ is $\LL$-sub-equation-free.
\end{lemma}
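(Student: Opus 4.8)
The plan is to
analyse what it means for $S+\alpha$ to contain a non-trivial solution to a sub-equation of $\LL$, and show that each such bad event can be ruled out by excluding finitely many values of $\alpha$. Write $\LL$ as $c_1 x_1 + \cdots + c_\ell x_\ell = 0$ with $\sum_i c_i = 0$ (translation-invariance). A sub-equation $\LL'$ is obtained by selecting a non-empty index subset $T \subseteq \{1,\ldots,\ell\}$ and asking for $\sum_{i \in T} c_i z_i = 0$. First I would observe that there are only finitely many sub-equations (one per subset $T$), and for each we seek solutions with every $z_i \in S+\alpha$, i.e. $z_i = s_i + \alpha$ for some $s_i \in S$. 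Substituting, the sub-equation becomes
\[
\sum_{i \in T} c_i (s_i + \alpha) = \sum_{i \in T} c_i s_i + \alpha \sum_{i \in T} c_i = 0.
\]
The key dichotomy is whether $\sum_{i\in T} c_i = 0$ or not.

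The easy case is $\sum_{i \in T} c_i \ne 0$: then the equation forces
$\alpha = -\bigl(\sum_{i \in T} c_i s_i\bigr)/\bigl(\sum_{i \in T} c_i\bigr)$,
so a non-trivial solution with shift $\alpha$ can only arise if $\alpha$ takes one of finitely many values \emph{once the $s_i$ are fixed}. The subtlety is that the $s_i$ range over all of the (possibly infinite) set $S$, so a priori this could produce infinitely many forbidden $\alpha$. I would handle this by noting that $\alpha$ is a positive integer, that $S$ is $\LL$-free, and — this is the crux — arguing that if $\alpha$ is large enough then the values $s_i + \alpha$ are forced to be close together (bounded deviation controlled by $\LL$), so that the putative sub-equation solution would in fact pull back to a genuine non-trivial solution of $\LL$ inside $S$ itself, contradicting $\LL$-freeness of $S$. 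The mechanism mirrors the bounded-deviation estimate used in the proof of Lemma~\ref{construct-set-inh}.

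The harder case is $\sum_{i \in T} c_i = 0$, for then the $\alpha$-terms cancel and the shift is invisible: a solution to $\LL'$ in $S+\alpha$ corresponds exactly to a solution $\sum_{i\in T} c_i s_i = 0$ in $S$, for every $\alpha$. Here excluding finitely many $\alpha$ cannot help, so I must instead show that any such solution is \emph{trivial}. The plan is to argue that because $S$ is $\LL$-free and $\LL$ is translation-invariant, any solution to a sub-equation $\LL'$ whose coefficients already sum to zero can be padded back up to a solution of the full $\LL$: the complementary indices $\{1,\ldots,\ell\}\setminus T$ also have coefficients summing to zero, so we may assign a single common value (say one of the $s_i$, or any element of $S$) to all of them, obtaining a solution of $\LL$ in $S$. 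Triviality of that solution (guaranteed by $\LL$-freeness) then forces the original sub-equation solution to be trivial as well, via the partition condition in the definition of trivial solution.

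Combining the two cases, the set of ``bad'' positive integers $\alpha$ — those for which $S+\alpha$ contains a non-trivial solution to some sub-equation — is a finite union (over the finitely many subsets $T$ with $\sum_{i\in T} c_i \ne 0$) of finite sets, hence finite. Every $\alpha$ outside this finite set makes $S+\alpha$ sub-equation-free, which is the claim. The main obstacle I anticipate is making the non-cancelling case fully rigorous when $S$ is infinite: I must convert ``$\alpha$ large $\Rightarrow$ the $s_i+\alpha$ cluster'' into a clean statement that a non-trivial sub-equation solution in $S+\alpha$ yields a non-trivial solution of $\LL$ in $S$, being careful that triviality is preserved under the translation and under the padding argument. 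The bookkeeping of the partition classes in the definition of trivial solution is where the care is needed, but the underlying counting — finitely many sub-equations, each contributing finitely many forbidden shifts — is what makes the finiteness conclusion go through.
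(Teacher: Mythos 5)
Your overall structure matches the paper's proof: the dichotomy on $\sum_{i\in T}c_i$, and, in the cancelling case, padding the complementary indices (whose coefficients also sum to zero by translation invariance) with a single common element of $S$ to get a solution of the full equation $\LL$ in $S$, invoking $\LL$-freeness, and transferring triviality back through the shift. That half of your argument is correct and is exactly what the paper does.

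The genuine gap is the step you yourself flag as the crux: the non-cancelling case when $S$ may be infinite. The claim that for $\alpha$ large enough a sub-equation solution in $S+\alpha$ ``pulls back'' to a non-trivial solution of $\LL$ inside $S$ cannot be made rigorous, because the lemma is simply false for infinite $S$. Concretely, take $\LL\colon x_1+x_2-x_3-x_4=0$ and $S=\{10^k : k\ge 0\}$. Then $S$ is $\LL$-free (if $10^a+10^b=10^c+10^d$ then the exponent multisets coincide, and such solutions are trivial via the partition $\{1,3\},\{2,4\}$ or $\{1,4\},\{2,3\}$), yet for every $c\ge 1$ the shift $\alpha=10^c-2$ makes $(1+\alpha,\,1+\alpha,\,10^c+\alpha)$ a solution of the sub-equation $x_1+x_2-x_3=0$ in $S+\alpha$, and every solution of that sub-equation is non-trivial since its coefficients sum to $1\ne 0$. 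So infinitely many positive $\alpha$ are bad, and no bounded-deviation or clustering mechanism can exclude them. The resolution is that the lemma is implicitly about \emph{finite} $S$: the paper's proof opens with $S=\{s_1,\ldots,s_n\}$, and the only application (in the proof of Theorem~\ref{thmc}) uses a finite set $S$. Once $S$ is finite, the non-cancelling case needs no further idea: each of the finitely many index sets $T$ with $\sum_{i\in T}c_i\ne 0$ and each of the finitely many tuples $(s_{r_1},\ldots,s_{r_k})\in S^k$ forbids the single value $\alpha=-\bigl(\sum_{j} c_{q_j}s_{r_j}\bigr)/\bigl(\sum_{j} c_{q_j}\bigr)$, and excluding this finite set of values is precisely the paper's choice of $\alpha$. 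With that reading, your proof closes and coincides with the paper's; without it, your case (1) is unsalvageable.
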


\begin{proof}
Let $\LL$ be the equation $c_1 x_1 + \cdots + c_{\ell} x_{\ell} = 0$,
and let $S = \{s_1, \ldots , s_n\}$ be an $\LL$-free set.
We will say a subsequence $(c_{q_1}, \ldots , c_{q_k})$
of
$(c_1, \ldots , c_{\ell})$ has non-zero sum
if $c_{q_1} + \cdots + c_{q_k} \ne 0$.
We choose $\alpha$ to be any positive integer which is not equal to
$-(c_{q_1} s_{r_1} + \cdots + c_{q_k} s_{r_k})/(c_{q_1} + \cdots +
c_{q_k})$ for any subsequence $c_{q_1}, \ldots , c_{q_k}$ with
non-zero sum and
any $(s_{r_1}, \ldots , s_{r_k}) \in S^k$.

Then $S + \alpha$ is $\LL$-sub-equation-free.
For suppose that $\LL'$: $c_{q_1} x_{q_1} + \cdots + c_{q_k} x_{q_k} = 0$ is a
sub-equation of $\LL$, and that
$(s_{r_1}+\alpha, \ldots , s_{r_k}+\alpha) \in (S+\alpha)^k$ is a
solution to $\LL'$. We will show that the solution is trivial.
First, $(c_{q_1}, \ldots , c_{q_k})$ cannot have non-zero sum, for then
\[
c_{q_1} (s_{r_1}+\alpha) + \cdots + c_{q_k} (s_{r_k}+\alpha) =
c_{q_1} s_{r_1} + \cdots + c_{q_k} s_{r_k} +
\alpha(c_{q_1} + \cdots + c_{q_k}) \ne 0
\]
by the definition of $\alpha$, a contradiction.
So $c_{q_1} + \cdots + c_{q_k} = 0$, which means that
\[
c_{q_1} s_{r_1} + \cdots + c_{q_k} s_{r_k}
=
c_{q_1} (s_{r_1}+\alpha) + \cdots + c_{q_k} (s_{r_k}+\alpha)
=
0 .
\]
Since $\LL$ is
translation invariant, the sum of the remaining coefficients,
$\sum_{i \not\in \{q_1,\ldots,q_k\}} c_i$, is also zero. Thus if $s$
is any element of $S$, we have
\[
c_{q_1} s_{r_1} + \cdots + c_{q_k} s_{r_k} +
\sum_{i \not\in \{q_1,\ldots,q_k\}} c_i s = 0 .
\]
Since $S$ is $\LL$-free, this solution of $\LL$ must be trivial, which means
that the total coefficient of each $s_i$ in
$c_{q_1} s_{r_1} + \cdots + c_{q_k} s_{r_k}$ must be zero.
Hence the total coefficient of each $s_i + \alpha$ in
$c_{q_1} (s_{r_1}+\alpha) + \cdots + c_{q_k} (s_{r_k}+\alpha)$ is
zero, so the solution $(s_{r_1}+\alpha, \ldots , s_{r_k}+\alpha)$ of
$\LL'$ is trivial, as required.
\end{proof}

We now show how to extend an $\LL$-free set (provided certain
conditions are met). This will be useful to create sets whose
largest $\LL$-free set is of some desired size.

We say that an injective function $f : \mathbb{Z} \rightarrow \mathbb{Z}$ is
solution-preserving for $\LL$ if the following holds:
$(f(x_1),\ldots,f(x_{\ell}))$ is a solution to $\LL$ if and only if
$(x_1,\ldots,x_{\ell})$ is a solution to $\LL$.

\begin{lemma}\label{extend}
Let $\LL$ be the linear equation
$c_1 x_1 + \ldots + c_{\ell} x_{\ell} = K$ with $\ell \ge 3$.
Let $S$ be a fixed subset of $\mathbb{Z} \setminus S_{\LL}$, $S'
\subseteq S$ a fixed $\LL$-free subset of $S$ and $A$ be a set of
integers with the following properties.
\begin{itemize}
\item If $K = 0$ and
$\LL$ is translation invariant, then $S'$ is $\LL$-sub-equation-free and $0 \not\in S$.
\item If $K = 0$, then $A$ is $\LL$-proper-sub-equation-free.
\item If $K \ne 0$, then $A \cup S'$ has no solutions to $\LL$ except for those in $A$.
\end{itemize}
Let $r$ and $t$ be positive integers.
%
%
Then we can choose solution-preserving functions $f_1,\ldots,f_r$ and a
set $T = \{u_1 , \ldots , u_t \}$ comprising $t$ distinct strictly
positive integers such that
the
sets $A$, $f_i(S)$ for each $i = 1,\ldots, r$, and $T$ are all mutually
disjoint and such that
\[
A \cup \bigcup_{i=1}^r f_i(S') \cup T
\]
has no non-trivial solutions to $\LL$ except those in $A$.
In particular, if $A' \subseteq A$ is $\LL$-free, then
$A' \cup \bigcup_{i=1}^r f_i(S') \cup T$ is $\LL$-free.
Also the functions $f_1,\ldots,f_r$ are polynomially bounded and the elements of
$T$ need be no larger than $\mathcal{O}(|A| + r + t)^{\ell}$.
\end{lemma}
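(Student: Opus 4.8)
The plan is to take each $f_i$ to be an affine map determined by a single integer parameter, and then to fix all these parameters together with the elements of $T$ greedily, one item at a time in increasing order of magnitude. Writing $C = \sum_i c_i$, a map $x \mapsto \alpha x + \beta$ with $\alpha \ne 0$ satisfies $\sum_k c_k(\alpha z_k + \beta) = \alpha \sum_k c_k z_k + \beta C$, so it is injective and solution-preserving precisely when $\alpha K + \beta C = K$. Accordingly I would use translations $f_i(x) = x + \beta_i$ when $C = 0$ (covering the translation-invariant homogeneous case and the inhomogeneous case with $C = 0$), pure scalings $f_i(x) = \alpha_i x$ when $K = 0$ and $C \ne 0$, and the maps $f_i(x) = \alpha_i x + K(1-\alpha_i)/C$ fixing $K/C$ when $K \ne 0$ and $C \ne 0$, taking $\alpha_i \equiv 1 \pmod{C}$ so that $\beta_i \in \mathbb{Z}$. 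In every case $f_i$ is solution-preserving, so $f_i(S')$ is itself $\LL$-free, and $f_i$ is encoded by one integer, which is the quantity to be chosen.

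I would then treat the $r$ map-parameters and the $t$ elements of $T$ as a single sequence of $r+t$ new items, always choosing the next value to exceed everything used so far; this makes $A$, the sets $f_i(S)$ and $T$ mutually disjoint, keeps $T$ strictly positive, and preserves positivity of all labels when $A \subseteq \mathbb{N}$. When fixing a given item I would examine every $\ell$-term sequence $\s = (z_1, \ldots, z_\ell)$ built from $A$, from the already-fixed copies and $T$-elements, and from the current item, that actually uses the current item. Substituting the affine form (or the value of the current $T$-element) together with all previously determined values turns $c_1 z_1 + \cdots + c_\ell z_\ell - K$ into a polynomial of degree at most one in the single unknown, which forbids at most one value unless it is \emph{degenerate}, meaning that the coefficient of the unknown vanishes. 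Since there are $\mathcal{O}((|A| + r|S'| + t)^\ell)$ such sequences and $\ell, |S'|$ are constants, at most $\mathcal{O}(|A| + r + t)^\ell$ values are barred at each step, so a choice always survives within a range of that size, giving the stated polynomial bounds on the $f_i$ and on the elements of $T$.

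The crux is the degenerate case, in which the current item cannot repair the sequence and I must instead show that such an $\s$ is never a genuinely new non-trivial solution. Degeneracy forces the entries of $\s$ lying in the current copy to form a block whose contribution to $\sum_k c_k z_k$ is independent of that copy's parameter: because its coefficients sum to zero (translation case), or because it is a solution of a proper sub-equation of $\LL$ inside $S'$ whose scaled contribution is zero (scaling case); for a repeated $T$-element, degeneracy means its total coefficient is zero. I would peel off this frozen block and reduce to the complementary sub-equation on the remaining entries, iterating over the items. Crucially, a frozen block's contribution does not depend on the parameter of its own copy, so the reduction can be organised around the last item whose entries are not frozen, which keeps the earlier greedy choices effective. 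The reduction should terminate with $\s$ exhibited either as a trivial solution or as a solution lying entirely in $A$, on invoking the hypothesis matched to the case: that $S'$ is $\LL$-sub-equation-free and $0 \notin S$ for translation-invariant $\LL$ (cf.\ Lemma~\ref{sub-equation-free}), that $A$ is $\LL$-proper-sub-equation-free for the remaining homogeneous equations, and that $A \cup S'$ has no solution of $\LL$ outside $A$ in the inhomogeneous case. Carrying through this case analysis — in particular checking that every frozen block is absorbed into a trivial class or into $A$, and that the asymmetry of the hypotheses is exactly what is needed — is the main obstacle.

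Granting this, no degenerate $\s$ constrains the greedy choice, so the construction yields maps $f_1, \ldots, f_r$ and a set $T$ with the required disjointness, positivity and size bounds, and with $A \cup \bigcup_i f_i(S') \cup T$ having no non-trivial solution of $\LL$ except those already in $A$. The final assertion is then immediate: if $A' \subseteq A$ is $\LL$-free, any non-trivial solution of $\LL$ inside $A' \cup \bigcup_i f_i(S') \cup T$ would be one of these exceptional solutions confined to $A$, so by disjointness its entries all lie in $A \cap (A' \cup \bigcup_i f_i(S') \cup T) = A'$, contradicting the $\LL$-freeness of $A'$.
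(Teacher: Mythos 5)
Your overall architecture (one integer parameter per copy, greedy choice of the parameters and the elements of $T$ so as to avoid the zeros of boundedly many degree-one polynomials, plus a separate analysis of the degenerate sequences) is the same as the paper's, and three of your four map families coincide with the paper's: scalings when $K=0$, $C\ne 0$; translations when $K\ne 0$, $C=0$; and $f_i(x)=\alpha_i x + K(1-\alpha_i)/C$ (the paper writes this as $(d_iC+1)x-d_iK$) when $K\ne 0$, $C\ne 0$. The fatal divergence is the homogeneous translation-invariant case, where you use translations $f_i(x)=x+\beta_i$ but the paper uses dilations $f_i(x)=d_ix$. This is not a matter of taste: with translations the conclusion of the lemma is simply false, so the degenerate-case analysis that you defer as ``the main obstacle'' cannot be carried out. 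Concretely, take $\LL\colon x_1+x_2-x_3-x_4=0$, $S=S'=\{2,3\}$ (which is $\LL$-sub-equation-free and omits $0$) and $A=\{10,11\}$ (which is $\LL$-proper-sub-equation-free). Then for every integer $\beta_1$,
\[
11+(2+\beta_1)-10-(3+\beta_1)=0,
\]
so $(11,\,2+\beta_1,\,10,\,3+\beta_1)$ is a solution to $\LL$ inside $A\cup f_1(S')$; once the sets are disjoint its four entries are distinct, so it is non-trivial, and it is not a solution in $A$. No choice of $\beta_1$ avoids it. This is exactly your degenerate situation: the block $(2+\beta_1)-(3+\beta_1)$ is frozen because its coefficients sum to zero, but its contribution is the \emph{non-zero} constant $-1$, which cancels against the $A$-part $11-10$. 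Such a frozen block is neither absorbed into a trivial class nor into $A$, and no hypothesis of the lemma forbids this: the obstruction couples $A$ with $S'$, whereas in the homogeneous case the hypotheses constrain $A$ and $S'$ only separately.

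This is precisely why the paper takes dilations in the translation-invariant case: with $f_i(x)=d_ix$, the coefficient of $d_i$ in the reduced combination is $c_{q_1}s_{r_1}+\cdots+c_{q_k}s_{r_k}$ with each $s_{r_j}\in S'$, so degeneracy literally asserts that the block is a solution to a sub-equation of $\LL$ lying inside $S'$; the hypothesis that $S'$ is $\LL$-sub-equation-free then forces that block to be trivial (each value's total coefficient zero), the frozen block contributes nothing, and the leftover $A$-part is killed by $\LL$-proper-sub-equation-freeness. With translations, degeneracy only says the block's coefficients sum to zero --- a statement about coefficients, not about $S'$ --- and the sub-equation-freeness hypothesis never engages. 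A secondary, fixable, inaccuracy: in the homogeneous non-translation-invariant case $S'$ is only assumed $\LL$-free, not sub-equation-free, so your block-by-block appeal to sub-equation-freeness is unavailable there as well; the paper instead argues globally, replacing all $A$- and $T$-entries by a fixed $s\in S'$ to manufacture a solution to the full equation $\LL$ inside $S'$, which is impossible because a homogeneous equation with $\sum_i c_i \ne 0$ admits no trivial solutions. To repair your proof, replace the translations in the $K=0$, $C=0$ case by dilations and run the degenerate analysis as just described.
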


\begin{proof}
We will use a similar technique to that used in
Lemma~\ref{construct-set}, that is, we will use a variable
corresponding to each integer that needs to be chosen, and obtain a set
of inequalities in these variables. Then choosing values of the
variables to satisfy all of the inequalities will give the result.

Let $S = \{ s_1, \ldots , s_k\}$, and assume that
$S' = \{ s_1, \ldots , s_{k'}\}$.
First consider the case when the equation is homogeneous, so that
$K=0$. In this case we will use the functions $f_i(s) = d_i s$, where
$d_1 , \ldots, d_r$ are integers to be chosen.

We will take a set $D = \{D_1, \ldots , D_r\}$ of variables, with
variable $D_i$ corresponding to integer $d_i$, and a set
$U = \{U_1, \ldots , U_t\}$ of variables with $U_i$ corresponding to element
$u_i$. We will also write $s_{ij}$ for $d_i s_j$ and take a variable
$S_{ij}$ corresponding to $s_{ij}$; the variable $S_{ij}$ of course
depends on $D_i$ since $S_{ij} = D_i s_j$. Write
$\Sigma = \{S_{ij} : 1 \le i \le r, 1 \le j \le k \}$ and
$\Sigma' = \{S_{ij} : 1 \le i \le r, 1 \le j \le k' \}$.

Now for any distinct pair
$\s = (Z,Z') \in (A \cup \Sigma \cup U)^2 \setminus A^2$, we
form the combination $\ac(\s) = Z - Z'$.
Note that we require that
all the new elements are distinct, so $Z$ and $Z'$ range over
$A \cup \Sigma \cup U$ and not merely $A \cup \Sigma' \cup U$.

Also for any sequence
$\s = (Z_1, \ldots , Z_{\ell}) \in (A \cup \Sigma' \cup U)^{\ell}
\setminus A^{\ell}$,
we form the combination
$\ac(\s) = c_1 Z_1 + \ldots + c_{\ell} Z_{\ell}$.
Note that we only include sequences with at least one variable from
$\Sigma' \cup U$, and in this case we do not consider variables from
$\Sigma \setminus \Sigma'$.

For each sequence $\s$, we substitute $S_{ij} = D_i s_j$ for any
variables in $\Sigma$ to obtain the reduced combination $\rac(\s)$.

We will say that $\rac(\s)$ is identically zero if both the
constant part (which is a linear combination of elements from $A$)
and the total coefficient of each variable, are zero.
We will say that $\ac(\s)$ is trivial if the total coefficient of each
element of
$A \cup \Sigma \cup U$ in $\ac(\s)$ is zero. Note this is stronger than saying
that $\ac(\s)$ is identically zero, because we require that the total
coefficient of each element of $A$ is zero, not merely that the total
constant term is zero. The combination $\ac(\s)$ is trivial if and only
if $\s$ corresponds to a trivial solution of $\LL$, that is,
the total coefficient of each value is zero.

We claim that if $\rac(\s)$ is identically zero, then $\ac(\s)$ is
trivial. Equivalently, we must show that either $\rac(\s)$ is not
identically zero, or $\ac(\s)$ is trivial.

For distinct pairs
$\s = (Z,Z') \in (A \cup \Sigma \cup U)^2 \setminus A^2$ this is clear,
since the only case where the terms in $\ac(\s)$ could cancel after
substitution is if
$Z = S_{ij}$ and $Z' = S_{i'j'}$. But then
$Z - Z' = S_{ij} - S_{i'j'}$ which reduces to
$\rlc(\s) =s_j D_i - s_{j'}D_{i'}$.
If this is identically zero then we must have $s_j = s_{j'}$ so
$\rlc(\s) = s_j (D_i - D_{i'})$. Since $0 \not\in S$ (if $\LL$ is
translation invariant, this is by assumption, otherwise because
$S_{\LL} = \{0\}$), this means that
$D_i = D_{i'}$ also, contradicting the assumption that $Z,Z'$ are
distinct.

Now consider any sequence
$\s = (Z_1, \ldots , Z_{\ell}) \in (A \cup \Sigma' \cup U)^{\ell}
\setminus A^{\ell}$,
so that
$\ac(\s) = c_1 Z_1 + \ldots + c_{\ell} Z_{\ell}$.
Note that $\ac(\s)$ is the sum of three parts, an
$A$-part, which is an integer formed from a linear combination of
elements of~$A$, a $\Sigma'$-part
which is a linear combination of variables in $\Sigma'$, and a $U$-part
which is a linear combination of variables in $U$. Similarly $\rac(\s)$
is the sum of an $A$-part, a $D$-part and a $U$-part. Note that since the
substitution only affects the variables in $\Sigma'$, the $A$-parts
of $\ac(\s)$ and $\rac(\s)$ are equal, and similarly for the $U$-parts.
For any $D$-variable $D_i$, the coefficient of $D_i$ in $\rac(\s)$
is a linear combination of
the form $c_{q_1} s_{r_1} + \ldots + c_{q_k} s_{r_k}$, where each
$s_{r_j} \in S'$.

%
%
%
Now if $\rac(\s)$ is identically zero, its $A$-part must be zero.
This gives a solution to a sub-equation of $\LL$, which must be
trivial since $A$ is $\LL$-proper-sub-equation-free. Thus for each element of
the sequence $\s$ which comes from the set $A$, its total coefficient
in $\rac(\s)$, and therefore also in $\ac(\s)$, must be zero.

Also if $\rac(\s)$ is identically zero, its $U$-part must be
identically zero, hence the same is true for $\ac(\s)$, that is, every
$U$-variable in $\s$ has total coefficient zero in $\ac(\s)$.
Now there are two cases, depending on whether or not
$\LL$ is translation invariant.

First suppose that $\LL$ is not translation invariant, so that
$\ac(\s)$ cannot be trivial. Then we must show that $\rac(\s)$ is not
identically zero. So suppose that it is.
Pick a
fixed $s \in S'$, and in $\rac(\s)$, replace each element of $A$ with
the value $s$, and set each $D$-variable (temporarily) to~$1$ and
each $U$-variable (temporarily) to~$s$. Since $\rac(\s)$ is identically zero,
then from above the total coefficients of its
$A$- and $U$-parts are both zero, and this results in a solution to $\LL$ in
$S'$, contradicting the assumption that $S'$ is $\LL$-free.

Now suppose that $\LL$ is translation invariant.
In this case, by assumption, $S'$ is $\LL$-sub-equation-free.

Suppose that $\rac(\s)$ is identically zero. Then we must show that
$\ac(\s)$ is trivial. From above, the total coefficient in $\ac(\s)$ of
each element of $A$ and each $U$-variable is zero, so
to show that $\ac(\s)$ is trivial, we need to show that the total
coefficient in $\ac(\s)$ of each $\Sigma'$-variable is also zero. We do this
as follows. Since $\rac(\s)$ is identically zero, the total coefficient
of each $D$-variable in $\rac(\s)$ is zero.

Consider the coefficient of
some $D$-variable $D_i$. This coefficient is a linear combination of
the form $c_{q_1} s_{r_1} + \ldots + c_{q_k} s_{r_k}$, where each
$s_{r_j} \in S'$.
As the coefficient of $D_i$ is $0$,
this gives a solution to a sub-equation of $\LL$, so
since $S'$ is $\LL$-sub-equation-free, this solution is trivial and so
the total coefficent of each $s_j$ in
$c_{q_1} s_{r_1} + \ldots + c_{q_k} s_{r_k}$ is zero. But this is just the
coefficient of $s_j D_i = S_{ij}$ in $\ac(\s)$, hence $\ac(\s)$ is
trivial, as required.

Hence whenever $\ac(\s)$ is not trivial, $\rac(\s)$ is not identically
zero. We form a set of inequalities by including the inequality
$\rac(\s) \ne 0$ whenever $\ac(\s)$ is non-trivial.

Now as in Lemma~\ref{construct-set}, we choose values
$d_i = \val(D_i)$ and $u_i = \val(U_i)$ for each of the
variables in $D \cup U$, from which those in $\Sigma$ follow by
setting $s_{ij} = d_i s_j$. We can choose the values of the $D$-variables
and $U$-variables to ensure that the
value $\val(\rac(\s))$ of $\rac(\s)$ is non-zero. But since clearly
$\val(\rac(\s) = \val(\ac(\s))$, this ensures that there are no
non-trivial solutions to $\LL$ in $A \cup \bigcup_{i=1}^r d_i S' \cup T$
except those already present in $A$.
In particular, if $A' \subseteq A$ is
$\LL$-free, then so is $A' \cup \bigcup_{i=1}^r d_i S' \cup T$.

We now consider the inhomogeneous case, where $K \ne 0$. This case is
simpler because there are no trivial solutions. As before we choose
functions $f_1, \ldots , f_r$. These depend on $r$ integers
$d_1, \ldots , d_r$ which we will choose as above using $r$ variables
$D_1, \ldots , D_r$.

Let $C = c_1 + \cdots + c_{\ell}$ be the sum of the coefficients. The
solution-preserving functions functions $f_i$ depend on whether $C$ is
zero or not. If
$C = 0$, let $f_i$ be the function defined by $f_i(s) = s + d_i$. If
$C \ne 0$, define $f_i$ by $f_i(s) = (d_iC+1)s - d_iK$. Now as above
set $s_{ij} = f_i(s_j)$, and let $S_{ij}$ be a variable corresponding
to $s_{ij}$, so that $S_{ij} = s_j + D_i$ if $C = 0$ and
$S_{ij} = (D_iC+1)s_j - D_iK = (s_jC - K)D_i + s_j$ otherwise. As above
$\Sigma = \{S_{ij} : 1 \le i \le r, 1 \le j \le k \}$ and
$\Sigma' = \{S_{ij} : 1 \le i \le r, 1 \le j \le k' \}$.

Now as before, for any distinct pair
$\s = (Z,Z') \in (A \cup \Sigma \cup U)^2 \setminus A^2$, we
form the combination $\ac(\s) = Z - Z'$,
and for any sequence
$\s = (Z_1, \ldots , Z_{\ell}) \in (A \cup \Sigma' \cup U)^{\ell}
\setminus A^{\ell}$,
we form the combination
$\ac(\s) = c_1 Z_1 + \ldots + c_{\ell} Z_{\ell} - K$.

As before we substitute for each $S_{ij}$ to obtain the reduced
combinations $\rac(\s)$.
Note that the constant part of $\rlc(z)$ is
now a linear combination of elements from $A$ and $S$.
We need to establish that $\rac(\s)$ is not
identically zero. As before, for a distinct pair $\s = (Z,Z')$, a
problem could only arise if $Z = S_{ij}$ and $Z' = S_{i'j'}$. Then
$\lc(\s) = Z - Z'$. If $C = 0$ then
$\rlc(\s) = s_j + D_i - (s_{j'} + D_{i'})$, which can only be
identically zero if $s_j = s_{j'}$ and $D_i = D_{i'}$, so $Z$ and $Z'$
are the same variable. If $C \ne 0$ then
$\rlc(\s) = (s_jC-K)D_i +s_j - ((s_{j'}C-K)D_{i'} +s_{j'})$. If this
is identically zero, we must have $s_j = s_{j'}$, and then
$\rlc(\s) = (s_jC-K)(D_i - D_{i'})$. Since $K/C \not\in S$, this can
only be identically zero if $D_i = D_{i'}$ but then $Z$ and $Z'$ are
the same variable, giving a contradiction.

So suppose that $\s$ is a sequence
$(Z_1, \ldots , Z_{\ell}) \in (A \cup \Sigma' \cup U)^{\ell}
\setminus A^{\ell}$. Note that some $Z_i$ is in $\Sigma' \cup U$.
We substitute for each $\Sigma'$-variable to obtain $\rac(\s)$.
Recall that these substitions are either
$S_{ij} = s_j + D_i$ or $S_{ij} = (s_jC - K)D_i + s_j$.
Then $\rac(\s)$ has a constant part, a $D$-part
and a $U$-part. The constant part is of the form
$c_{q_1} x_{q_1} + \ldots + c_{q_k} x_{q_k} - K$, where each
$x_{q_j}$ is in $A \cup S'$.
Then observe that the $U$-part is
$\sum_{i \not\in \{q_1, \ldots , q_k\}} c_i Z_i$, where each
$Z_i \in U$. Also either some $x_{q_j}$ is in $S'$, or the $U$-part
has at least one term. If $\rac(\s)$ is identically zero, then the constant part
must be zero and the $U$-part identically zero. But then setting
(temporarily) each $U$-variable equal to some fixed element of $S'$
gives a solution to $\LL$ in $A \cup S'$
involving at least one term in $S'$, so this solution
%
is not in $A$, a contradiction.

Thus $\rac(\s)$ is not identically zero, so we can choose
values of the $D$- and $U$-variables to ensure that every $\ac(\s)$ is non-zero,
and then there are no additional solutions to $\LL$
in $A \cup \bigcup_{i=1}^r f_i(S') \cup T$.

The
number of elements in $A \cup D \cup U$ is $|A| + r + t$, hence the
number of inequalities is $\mathcal{O}(|A| + r + t)^{\ell}$. Hence
each value chosen has to avoid up to this many values, so we can
choose each element of $D\cup U$ to be no larger than
$\mathcal O(|A|+r+t)^\ell$, with the elements of $U$ being positive.

\end{proof}

We can now state and prove the $\NP$-completeness result for
$\eps$-$\LL$-\textsc{Free Subset}. Again this result is a
generalisation of one given by Meeks and Treglown \cite{meeks-treglown}
and the proof closely follows theirs.
\begin{thm}\label{thmc}
Let $\LL$ be a linear equation with at least three
variables, and let $\eps$ be a rational number satisfying
$\kappa (\LL) < \eps < 1$.
Then $\eps$-$\LL$-\textsc{Free Subset} is strongly $\NP$-complete.
\end{thm}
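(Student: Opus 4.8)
The plan is to prove membership in $\NP$ (guess a subset, verify in polynomial time that it is $\LL$-free and has size at least $\eps|A|$) and then to give a polynomial-time, polynomially-bounded reduction from \textsc{Independent Set}, which is strongly $\NP$-complete, reusing the set constructions of the previous sections together with the padding supplied by Lemma~\ref{extend}. Here $\opt(G)$ denotes the size of a largest independent set of $G$ and $\opt(X)$ the size of a largest $\LL$-free subset of $X$. Given a graph $G=(V,E)$ (a tripartite one in the inhomogeneous case, as required by Lemma~\ref{construct-set-inh}) and a target $k_0$, I first fix a \emph{sparse gadget}: since $\eps>\kappa(\LL)=\sup\mathcal{C}(\LL)$, choose $\eps'$ with $\kappa(\LL)<\eps'<\eps$; then $\eps'\notin\mathcal{C}(\LL)$, so there is a fixed finite set $S\subseteq\mathbb{Z}\setminus S_{\LL}$ whose largest $\LL$-free subset $S'$ satisfies $|S'|\le\eps'|S|<\eps|S|$. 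In the translation-invariant homogeneous case I would first replace $S'$ by a shift $S'+\alpha$, which by Lemma~\ref{sub-equation-free} is $\LL$-sub-equation-free, and arrange $0\notin S$; this is exactly the hypothesis Lemma~\ref{extend} needs. I then build the base set $A=A_V\cup A_E$ from $G$ using Lemma~\ref{construct-set-sef} (homogeneous) or Lemma~\ref{construct-set-inh} with this same $S'$ (inhomogeneous), so that $A$ meets the matching hypothesis of Lemma~\ref{extend}, has polynomially bounded elements, $\opt(A)=(\ell-2)|E|+\opt(G)$, and $|A|=|V|+(\ell-2)|E|$.

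Next I apply Lemma~\ref{extend} with this $A$, $S$, $S'$ and parameters $r,t$, obtaining solution-preserving maps $f_1,\dots,f_r$ and a set $T$ of $t$ positive integers, and set
\[
A^{*}=A\cup\bigcup_{i=1}^{r} f_i(S)\cup T,
\]
where I deliberately include the \emph{full} copies $f_i(S)$ so that each behaves as a diluting block. The crucial claim is the additive identity $\opt(A^{*})=\opt(A)+r|S'|+t$. The upper bound is immediate: the blocks $A$, $f_1(S),\dots,f_r(S)$ and $T$ are mutually disjoint, and any $\LL$-free subset of $A^{*}$ meets each block in a set that is internally $\LL$-free, hence contributes at most $\opt(A)$, $\opt(f_i(S))=|S'|$, and $|T|=t$ respectively. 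The lower bound is precisely the content of Lemma~\ref{extend}: a largest $\LL$-free subset of $A$ together with $\bigcup_i f_i(S')$ and $T$ is a subset of $A\cup\bigcup_i f_i(S')\cup T$, which has no non-trivial solution outside $A$, and so is $\LL$-free of the required size. Consequently $|A^{*}|=|V|+(\ell-2)|E|+r|S|+t$ and $\opt(A^{*})=(\ell-2)|E|+\opt(G)+r|S'|+t$.

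Finally I tune $r$ and $t$. Rearranging $\opt(A^{*})\ge\eps|A^{*}|$ shows it is equivalent to $\opt(G)\ge\tau$, where $\tau=\eps|V|-(1-\eps)(\ell-2)|E|+r\bigl(\eps|S|-|S'|\bigr)-(1-\eps)t$. Since $\eps|S|-|S'|>0$, first pick the least $r\ge0$ making $\tau\ge k_0$ at $t=0$; then increasing $t$ by one unit decreases $\tau$ by $1-\eps<1$, so, the interval $(k_0-1,k_0]$ having length $1$, some $t\ge0$ places $\tau$ inside it. For an integer $\opt(G)$ this gives $\opt(G)\ge\tau$ if and only if $\opt(G)\ge k_0$, i.e. $\opt(A^{*})\ge\eps|A^{*}|$ if and only if $G$ has an independent set of size $k_0$. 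Because $k_0\le|V|$ and $\eps|S|-|S'|$, $1-\eps$ are fixed positive rationals, the chosen $r,t$ are polynomially bounded; since $S,S'$ are fixed and Lemma~\ref{extend} keeps every new element of size $\mathcal{O}(|A|+r+t)^{\ell}$ (and we may additionally forbid the single value in $S_{\LL}$), the instance $A^{*}\subseteq\mathbb{Z}\setminus S_{\LL}$ uses only polynomially bounded integers, so the reduction witnesses strong $\NP$-completeness.

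The main obstacle is the additive identity for $\opt(A^{*})$ in the presence of the diluting blocks: one must guarantee that combining a largest $\LL$-free subset of $A$ with the copies $f_i(S')$ and with $T$ creates no new solution, and that the extra diluting elements $f_i(S\setminus S')$ cannot be exploited to beat the per-block bound. The first point is exactly what Lemma~\ref{extend} is engineered to deliver, while the second is handled by the trivial block-restriction estimate above; the translation-invariant subcase relies in addition on the sub-equation-free shift of Lemma~\ref{sub-equation-free}. A secondary, purely arithmetic difficulty is placing the proportion threshold exactly at $\opt(G)=k_0$ with polynomially bounded $r$ and $t$, resolved by the sub-unit step argument for $\tau$.
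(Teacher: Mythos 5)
Your proposal is correct and follows essentially the same route as the paper: the same gadget constructions (Lemma~\ref{construct-set-sef}, Lemma~\ref{construct-set-inh}), the shift of Lemma~\ref{sub-equation-free}, and the padding of Lemma~\ref{extend} with $r$ diluting copies of $S$ and $t$ enriching singletons, with the paper merely phrasing the reduction as coming from $\LL$-\textsc{Free Subset} (itself reduced from \textsc{Independent Set}) rather than inlining it. Your threshold-interval tuning of $r$ and $t$ is just a repackaging of the paper's explicit ceiling computations ($r = \max(0,\lceil (k-\eps a)/((\eps-\eps')|S|)\rceil)$, $t = \lceil (\eps a^*-k^*)/(1-\eps)\rceil$), and both yield the same conclusion.
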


\begin{proof}
Let $\LL$ be the linear equation
$c_1 x_1 + \ldots + c_{\ell} x_{\ell} = K$ with $\ell \ge 3$.

By definition of $\kappa(\LL)$, there is
a set $S\subseteq \mathbb{Z} \setminus S_{\LL}$ such that the largest $\LL$-free
subset $S'$ of $S$ has size precisely $\eps'|S|$ where
$\kappa(\LL)\leq\eps'<\eps$.
If $\LL$ is homogeneous
and translation invariant, then by Lemma~\ref{sub-equation-free} we
can find an integer $\alpha$ such that $S'+\alpha$ is
$\LL$-sub-equation-free and $0 \not\in S + \alpha$.
Clearly the largest $\LL$-free subset of
$S+\alpha$ has size $\eps'|S+\alpha|$. Thus in the case when $\LL$
is homogeneous and translation invariant we can assume
(by replacing $S$ by $S+\alpha$ and $S'$ by $S'+\alpha$) that
$S'$ is $\LL$-sub-equation-free and $0 \not\in S$.

It is shown in \cite{meeks-treglown}
that $\eps$-$\LL$-\textsc{Free Subset} belongs to $\NP$.
To show that the problem is $\NP$-complete, we describe a reduction from
$\LL$-\textsc{Free Subset}%
, shown to be $\NP$-complete in
Theorems~\ref{np-completeness} and
Corollary~\ref{np-completeness-inh}. We can also, by
Corollary~\ref{np-completeness-sef} and Lemma~\ref{construct-set-inh},
assume that the elements of $A$ are bounded by a polynomial in
$|A|$, and that (i) if $K = 0$ then $A$ is
$\LL$-proper-sub-equation-free and (ii)
if $K \ne 0$ then $A \cup S'$ has no solution to $\LL$ except for
those in $A$.

Suppose that $(A,k)$ is such an instance of $\LL$-\textsc{Free Subset} where
$A\subseteq \mathbb{N}$.
We will define a set $B \subseteq \mathbb{Z}$ such that $B$ has an
$\LL$-free subset of size at least $\eps|B|$ if and only if $A$ has an
$\LL$-free subset of size $k$.
Note that we may assume that $k \le |A|$, since otherwise the
instance $(A,k)$ is a ``no''-instance and we can take $B$ to be any
$\LL$-free set.

Let $a = |A|$, and set
\[
r = \max \left( 0,\left\lceil \frac{k - \eps a}{(\eps - \eps')|S|}
\right\rceil \right) ,
\]
and note that $r = \mathcal{O}(|A|)$.
Let $k^* = k + r|S'| = k + r\eps'|S|$ and $a^* = a + r|S|$.
Then by definition of $r$,
$r \eps |S| - r \eps' |S| \ge k - \eps a$, so that
$k + r \eps' |S| \le \eps a + r \eps |S| = \eps(a + r|S|)$,
or $k^* \le \eps a^*$.

Now set
\[
t = \left\lceil \frac{\eps a^* - k^*}{1 - \eps} \right \rceil,
\]
and again note that $t = \mathcal{O}(|A|)$.
Then
\[
t \ge \frac{\eps a^* - k^*}{1 - \eps} \ge t-1,
\]
so
\[
t(1-\eps) \ge \eps a^* - k^* \ge (t-1)(1-\eps),
\]
and so
\[
k^* + t \ge \eps (a^* + t) \ge k^* + t - (1-\eps).
\]
Hence $\lceil \eps (a^* + t) \rceil = k^* + t$, or
$\lceil \eps (|A| + r|S| + t) \rceil = k + r|S'| + t$.

By Lemma~\ref{extend}, we can construct, in
time polynomial in $\size{A}$, a set
$B = A \cup \bigcup_{i=1}^r f_i(S) \cup T$, where the functions $f_i$
are solution-preserving, such that
$|B| = |A| + r|S| + t$ and such that
$A \cup \bigcup_{i=1}^r f_i(S') \cup T$ has no non-trivial solutions
to $\LL$ apart from those in $A$.

Then we claim that
$B$ has an $\LL$-free subset of size at least $\eps |B|$ if and only
if $A$ has an $\LL$-free subset of size $k$.

To show this, first suppose that $B$ has an $\LL$-free subset $B'$ of size
at least $\eps |B|$. Then $B'$ has size at least
$\lceil \eps |B| \rceil = \lceil \eps (|A| + r|S| + t) \rceil = k +
r|S'| + t$. Clearly
the largest $\LL$-free set in  $f_i(S)$ is of size $|S'|$.
Hence at most $r|S'|+t$ of the elements of $B'$ can be in
$\bigcup_{i=1}^r f_i(S) \cup T$ so at least $k$ are in $A$, as
required.

On the other hand, suppose that $A$ has an $\LL$-free subset $A'$ of
size $k$. Then by Lemma~\ref{extend}, the set
$A' \cup \bigcup_{i=1}^r f_i(S') \cup T$ is $\LL$-free, and this set
has size
$k + r|S'| + t = \lceil \eps (|A| + r|S| + t) \rceil = \lceil \eps |B|
\rceil$.

Hence $B$
is a yes-instance for $\eps$-$\LL$-\textsc{Free Subset} if and only if
$(A,k)$ is a yes-instance for $\LL$-\textsc{Free Subset}.
\end{proof}
\section{Counting the number of $\LL$-free subsets}\label{sec:counting}
We move on to consider the counting version of $\LL$-\textsc{Free Subset}.

\medskip
\begin{framed}
\noindent $\#\LL$-\textsc{Free Subset}\newline
\textit{Input:} A finite set $A \subseteq \mathbb{Z}$. \newline
\textit{Output:} The number of $\LL$-free subsets of $A$.
\end{framed}
We first consider the homogeneous case. Here we use two different
constructions, with one construction covering equations with three variables and the other equations with four or more variables.

Each construction requires another modified version of Lemma~\ref{construct-set}.
\begin{lemma}\label{lem:counting3.2}
Consider a linear equation $c_1x_1+\cdots+c_3 x_3=0$, where the coefficients $c_i$ are all non-zero, and let $\LL$ be an equivalent linear equation $a_1x_1+a_2x_2=b_0y_0$ in standard form.
Let $G=(V,E)$ be a graph and let $r\in \nats$.
Then we can construct in polynomial time, a set $A\subseteq \ints$ with the following properties.
\begin{enumerate}
\item $A$ is the union of three sets
$A_V,\,A_E$ and $U$, where
$A_V=\{x_v:v\in V\}$, $A_E=\{y_e:e\in E\}$ and
$U=\{u_{v,e,i}:v\in V,\, e=vw\in E,\,1\leq i\leq r\}$;
\item $|A| = |V| + (1+2r)|E|$;
\item for every edge $e=vw$, some permutation of $(x_v,x_w,y_e)$ is a solution to $\LL$; such a solution we call an edge solution;
\item for every edge $e=vw$ and every integer $i$ with $1\leq i\leq r$,
some permutation of $(u_{v,e,i},u_{w,e,i},y_e)$ is a solution to $\LL$;
\item if $(z_1,z_2,z_3)$ is a non-trivial solution to $\LL$, then $(z_1,z_2,z_3)$ is a permutation of
$(x_v,x_w,y_e)$ for some edge $e=vw$ or of $(u_{v,e,i},u_{w,e,i},y_e)$ for some edge $e=vw$ and integer $i$ with $1 \leq i \leq r$;
\item $\max(A)=\mathcal{O}(|V|^{12}r^2)$;
\item $A\subseteq \nats$ unless all the coefficients $c_1,c_2,c_3 $ have the same sign.
\end{enumerate}
\end{lemma}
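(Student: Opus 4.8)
The plan is to adapt the $\ell=3$ case of Lemma~\ref{construct-set}, enlarging the variable set to carry the gadget labels and forcing a second family of solutions. Fix an arbitrary ordering of $V$ and introduce one variable per element to be chosen: a vertex variable $X_v$ for each $v$, an edge variable $Y_e$ for each $e$, and a gadget variable $U_{v,e,i}$ for each incidence $(v,e,i)$. For an edge $e=vw$ with $v<w$ I would impose the edge equation $a_1 X_v + a_2 X_w = b_0 Y_e$ (which forces Condition~3) and, for each $i$, the gadget equation $a_1 U_{v,e,i} + a_2 U_{w,e,i} = b_0 Y_e$ (which forces Condition~4). These designate $Y_e$ and, for each gadget, the label $U_{w,e,i}$ as \emph{dependent}, leaving $X_v$ and the $U_{v,e,i}$ as free variables. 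Substituting $Y_e = (a_1 X_v + a_2 X_w)/b_0$ and $U_{w,e,i} = (a_1 X_v + a_2 X_w - a_1 U_{v,e,i})/a_2$ turns each $\lc(\s)$ into a reduced combination $\rlc(\s)$ in the free variables alone.

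Next I would set up the inequalities exactly as in Lemma~\ref{construct-set}. Type~1 requires $\lc(\s)=Z-Z'\ne 0$ for each distinct pair, forcing all labels distinct (Condition~2). Type~2 ranges over all triples $\s=(Z_1,Z_2,Z_3)$ that are neither edge-sequences (underlying set $\{X_v,X_w,Y_e\}$) nor gadget-sequences (underlying set $\{U_{v,e,i},U_{w,e,i},Y_e\}$), requiring $\lc(\s)=a_1 Z_1 + a_2 Z_2 - b_0 Z_3 \ne 0$ whenever $\lc(\s)$ is not identically zero. As in the original lemma, enforcing these through $\rlc(\s)\ne 0$ is legitimate precisely when $\rlc(\s)$ is not identically zero, and establishing this is what delivers Condition~5. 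The Type~1 check is routine: after substitution two distinct variables cannot cancel, since a dependent gadget variable contributes a free gadget variable $U_{v,e,i}$ with nonzero coefficient $a_1/a_2$ that nothing else can cancel.

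The main obstacle is the identically-zero analysis for Type~2, the analogue of the $\ell=3$ argument at the end of Lemma~\ref{construct-set}, now complicated by the two new variable types. The new feature is that a dependent gadget variable $U_{w,e,i}$ substitutes to an expression in the three free variables $X_v$, $X_w$ and $U_{v,e,i}$, so I must track free gadget coefficients as well as vertex coefficients. The key structural fact is \emph{locality}: a free gadget variable $U_{v,e,i}$ can receive a nonzero coefficient in $\rlc(\s)$ only if $U_{v,e,i}$ itself or its partner $U_{w,e,i}$ occurs in $\s$, so a surviving gadget coefficient pins down one gadget. Assuming $\rlc(\s)\equiv 0$ with $\lc(\s)$ nontrivial, I would split into cases according to how many entries of $\s$ are dependent (of type $Y_e$ or $U_{w,e,i}$) and whether the edges and gadgets involved coincide; in each case, requiring the coefficients of every vertex and free gadget variable to vanish forces the three entries to be the two endpoint-labels and the shared $Y_e$ of a single edge or gadget, so $\s$ is an edge-sequence or a gadget-sequence. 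This is the same triangle/shared-endpoint bookkeeping as before, with the gadget solutions behaving as a second copy of the edge solutions.

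Finally I would choose the values. To keep all dependent labels integral it suffices to take every free label to be a multiple of $N=a_2|b_0|$, a constant depending only on the equation; the divisibilities above then make each $Y_e$ and each $U_{w,e,i}$ an integer. The one wrinkle beyond Lemma~\ref{construct-set} is that the gadget equation confines both $u_{v,e,i}$ and $u_{w,e,i}$ to the bounded interval determined by $P=a_1 x_v + a_2 x_w$, so the $r$ gadget labels of $e$ cannot be pushed arbitrarily high. I would therefore fix the vertex labels first as large, widely spaced multiples of $N$, compute each $y_e$, and then choose the free labels $u_{v,e,i}$ as multiples of $N$ inside $(0,(a_1 x_v + a_2 x_w)/a_1)$, taking $x_v$ large enough that this interval contains $r$ choices after discarding the at most $\mathcal{O}(M)$ values forbidden (for both $u_{v,e,i}$ and its determined partner) by the reduced combinations. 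A routine count of labels and inequalities, as in Lemma~\ref{construct-set}, shows this greedy choice keeps every element polynomially bounded, of size $\mathcal{O}(|V|^{12}r^2)$, giving Condition~6. When the coefficients are not all of one sign we have $b_0>0$ and $a_1,a_2>0$, so each $y_e>0$; choosing each $u_{v,e,i}$ strictly inside the interval also keeps $u_{w,e,i}>0$, yielding Condition~7.
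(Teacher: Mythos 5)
Your proposal is sound in outline but takes a genuinely different route from the paper. The paper never re-opens the constraint-satisfaction machinery: it applies Lemma~\ref{construct-set} once, as a black box, multiplies the resulting labels by a large integer $N$, and adjoins the gadget labels as small perturbations $u_{v,e,i}=x_v+a_2N_{e,i}$, $u_{w,e,i}=x_w-a_1N_{e,i}$, where the integers $N_{e,i}$ are chosen greedily so that the perturbation set $\{a_2N_{e,i},-a_1N_{e,i},0\}$ admits no cross-gadget solutions. Condition~5 then follows from a rounding argument: for any solution $(z_1,z_2,z_3)$ in $A$, replacing each gadget label by its parent vertex label changes $\LL(z_1,z_2,z_3)$ by less than $N$ while keeping it divisible by $N$, so the rounded triple is a solution in $A_V\cup A_E$, and the choice of the $N_{e,i}$ pins the perturbations down to a single gadget. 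You instead enlarge the variable/inequality framework itself, adding gadget equations $a_1U_{v,e,i}+a_2U_{w,e,i}=b_0Y_e$ and a second dependent variable per gadget. This can be made to work: your locality observation is the right key fact, and the dangerous new cases do check out (for instance the mixed triple $(U_{v,e,i},U_{w,e,j},Y_e)$ with $i\neq j$ reduces to $a_1(U_{v,e,i}-U_{v,e,j})$, which is not identically zero, so cross-gadget solutions are excluded). What the paper's route buys is brevity: its only new case analysis is the short one at the end of its proof, whereas your route forces a rerun of the entire $\ell=3$ identically-zero analysis with four species of variables (vertex, edge, free gadget, dependent gadget); you have only sketched that analysis, and it is the substance of Condition~5.

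The one place where your argument, as written, does not deliver the statement is Condition~6. You invoke ``at most $\mathcal{O}(M)$ forbidden values'' per label with $M$ the total number of inequalities, as in Lemma~\ref{construct-set}; but here $M=\Theta(|A|^3)=\Theta(r^3|V|^6)$, so this crude count forces every gadget interval, and hence every vertex label, to have size $\Omega(r^3|V|^6)$, giving $\max(A)=\mathcal{O}(r^3|V|^7)$ at best. That is polynomial in $|V|$ and $r$, so it would still support Theorem~\ref{thm:counthom} (where $r=\mathcal{O}(|V|^2)$), but it exceeds the claimed $\mathcal{O}(|V|^{12}r^2)$ whenever $r\gg|V|^5$, so the lemma as stated, for all $r\in\nats$, is not proved. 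The fix is to count per label rather than globally: when a free gadget label is chosen, only the $\mathcal{O}(|A|^2)=\mathcal{O}(r^2|V|^4)$ constraints containing it or its partner can forbid values, and when a vertex label is chosen the gadget variables are still unassigned, so only constraints whose reductions involve vertex variables alone are active. With that refinement your construction yields $\max(A)=\mathcal{O}(r^2|V|^4+|V|^7)$, comfortably within the stated bound. The paper sidesteps this issue entirely because its base set is built independently of $r$, and the $r$-dependent choices (the $N_{e,i}$) need only avoid $\mathcal{O}(|E|^2r^2)$ pairwise relations.
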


\begin{proof}
Apply Lemma~\ref{construct-set} to produce a set $A'$ with $A'_V=\{x'_v:v\in V\}$ and $A'_E=\{y_e:e\in E\}$.
We will modify $A'$ by multiplying each element by a large positive integer $N$ and then extend it by adding, for each edge $(v,w)$, $r$ new elements close to each of the vertex labels $x_v=Nx'_v$ and $x_w=Nx'_w$.

We first choose a strictly positive integer $N_{e,i}$ for each edge
$e$ and integer $i$ with $1\leq i \leq r$, so that the set
\[ S = \{ a_2N_{e,i},-a_1N_{e,i}: e\in E,\,1 \leq i \leq r\} \cup \{0\}\]
has $2r|E|+1$ distinct elements and
if $\{z_1,z_2,z_3\} \subseteq S$ and some permutation of
$(z_1,z_2,z_3)$ is a solution to $\LL$, then there exists an edge $e$
and integer $i$ with $1\leq i\leq r$ such that $\{z_1,z_2,z_3\}
\subseteq \{a_2N_{e,i}, -a_1N_{e,i},0\}$.

We may choose the integers $N_{e,i}$ greedily in any order. When one
of them is chosen, there are strictly fewer than $\mathcal{O}(|E|^2
r^2)$ linear equations that it must not satisfy.
Thus they may be chosen so that $N'=\max_{e\in E,\ 1\leq i \leq r}N_{e,i}$
satisfies $N'=\mathcal{O}(|E|^2 r^2)$.

Now let $c=\max\{|c_1|,|c_2|,|c_3 |\}$ and $N=3c^2N'+1$. For each
$v\in V$, let $x_v=Nx'_v$, and for each $e\in E$ let $y_e=Ny'_e$. Form
$A_V$ and $A_E$ by setting
$A_V=\{x_v:v\in V\}$ and $A_E=\{y_e:e\in E\}$. Solutions to $\LL$ in
$A_V\cup A_E$ are in one-to-one correspondence with solutions to $\LL$
in $A'_V\cup A'_E$.

We now construct the set $U$. Start with $U$ being empty, and for each edge of $G$, add $2r$ elements to it.
Suppose that edge $e$ of $G$ joins $v$ and $w$ with $v$ coming before $w$ in the vertex ordering of Lemma~\ref{construct-set}.
For each $i=1,\ldots,r$, add the elements $u_{v,e,i}=x_v + N_{e,i}a_2$ and $u_{w,e,i}=x_w - N_{e,i}a_1$ to $U$. Finally set $A=A_V\cup A_E \cup U$.

Notice that all the conditions on $A$ in the statement of the lemma are satisfied except possibly Condition~5.

Given a $3$-tuple $(z_1,z_2,z_3)$, let
\[ \LL(z_1,z_2,z_3) = a_1 z_1 + a_2 z_2 - b_0 z_3.\]
If $z\in A$, then define $\bar z = z$ if $z\notin U$ and otherwise if $z = x_v \pm N_{e,i} a_k$ then define $\bar z=x_v$.

For any $3$-tuple $(z_1,z_2,z_3)$ of elements of $A$,
\begin{equation} \label{eq:diff} |\LL(z_1,z_2,z_3 ) - \LL(\bar z_1,\bar z_2,\bar z_3)| \leq 3c^2N'<N.\end{equation}

Suppose that $(z_1,z_2,z_3)$ is a solution to $\LL$ in $A$. Then $\LL(z_1,z_2,z_3)=0$. Furthermore, each of $\bar z_1,\bar z_2,\bar z_3$ is divisible by $N$, so $\LL(\bar z_1,\bar z_2,\bar z_3 )$ is divisible by $N$.
Combining this observation with~\eqref{eq:diff}, we deduce that $\LL(\bar z_1,\bar z_2,\bar z_3)=0$. So $(\bar z_1,\bar z_2,\bar z_3)$ is a solution to $\LL$ in $A_V \cup A_E$, and we observed earlier that by dividing each element by $N$ we obtain from it a solution to $\LL$ in $A'_V \cup A'_E$.
Hence either $\{\bar z_1, \bar z_2, \bar z_3\} = \{x_v,x_w,y_e\}$ for some edge $e=vw$ or $\LL$ is translation invariant and $(\bar z_1, \bar z_2, \bar z_3) = (z,z,z)$ for some $z\in A_V\cup A_E$.

In the former case, by permuting the indices if necessary, we may assume that $\bar z_1=x_v$, $\bar z_2=x_w$ and $\bar z_3=y_e$ with $v$ coming before $w$ in the vertex ordering of Lemma~\ref{construct-set}. Then the choice of the integers $N_{e,i}$ ensures that there exists an edge $e$ and integer $i$ such that $z_1\in \{x_v,x_v+a_2N_{e,i}\}$, $z_2\in \{x_w,x_w-a_1N_{e,i}\}$ and $z_3=y_e$. But no permutation of $(x_v,x_w-a_1N_{e,i},y_e)$ or of
$(x_v+a_2N_{e,i},x_w,y_e)$ can be a solution of $\LL$ because for each permutation $\sigma$ of $\{1,2,3\}$, we have either $\LL(\bar z_{\sigma(1)}, \bar z_{\sigma(2)},\bar z_{\sigma(3)})=0$ or $|\LL(\bar z_{\sigma(1)}, \bar z_{\sigma(2)},\bar z_{\sigma(3)}|\geq N$. Thus $(z_1,z_2,z_3)=(x_v,x_w,y_e)$ or $(z_1,z_2,z_3)=(x_v+a_2N_{e,i},x_w-a_1N_{e,i},y_e)$.

In the latter case, either $z=y_e$ for some edge $e$ or $z=x_v$ for some vertex $v$. If $z=y_e$, then $z_1=z_2=z_3=y_e$ and we have a trivial solution. 
For any vertex $v$, edge $e$ and integer $i$, the set $A$ contains at
most one of 
$x_v+a_2N_{v,e,i}$ and $x_v-a_1N_{v,e,i}$. So either
$\{z_1,z_2,z_3\}\subseteq \{x_v,x_v+a_2N_{v,e,i}\}$ or
$\{z_1,z_2,z_3\}\subseteq \{x_v,x_v-a_1N_{v,e,i}\}$ for some edge $e$
and integer $i$.
As $\LL$ is translation invariant we have $c_j + c_k \ne 0$ for all $j$ and $k$. Thus in either case $z_1=z_2=z_3$ and we have a trivial solution. Thus Condition~5 holds.
\end{proof}

For the case when $\ell \ge 4$ we have a different construction.

\begin{lemma} \label{construct-set-amplify-edges}
Consider a linear equation
$c_1 x_1 + \cdots + c_{\ell} x_{\ell} = 0$, where $\ell \ge 4$
and the coefficients $c_i$ are all non-zero, and let
$\LL$ be an equivalent linear equation
$a_1 x_1 + a_2 x_2 + b_1 y_1 + \cdots + b_{\ell-3} y_{\ell-3} = b_0y_0$
in standard form.
Let $G = (V,E)$ be a graph and let $r\in \nats$.
Then we can construct in polynomial time a
set $A \subseteq \mathbb{Z}$ with the following properties:
\begin{enumerate}
\item $A$ is the union of sets
$A_V, A_E$, where
$A_V = \{x_v : v \in V\}$ and
$A_E = \{y_{e,j}^i : e \in E, i = 0, 1, \ldots , \ell-3,
j = 1,\ldots,r\}$; thus for each edge $e$ we have $r$ sets of values;
\item $|A| = |V| + (\ell-2)r|E|$;
\item for every edge $e = (v,w)$ and each $j = 1,\ldots,r$, some permutation of
$(x_v,x_w, y_{e,j}^1 , \ldots , y_{e,j}^{\ell-3} , y_{e,j}^0)$ is a solution to
$\LL$; such a solution we call an edge-solution;
\item if $(z_1, \ldots , z_{\ell})$ is a non-trivial solution to
$\LL$, then $(z_1, \ldots , z_{\ell})$ is a permutation of
$(x_v,x_w, y_{e,j}^1 , \ldots , y_{e,j}^{\ell-3} , y_{e,j}^0)$ for some edge $e =
(v,w)$ and some $j \in \{1,\ldots,r\}$, except that, in that case that
$\LL$ is the equation $-x_1 + x_2 + y_1 = y_0$, the sequence
$(z_1,z_2,z_3,z_4)$ may also be a permutation of
$(y_{e,j}^1, y_{e,j}^0, y_{e,j'}^1, y_{e,j'}^0)$ for any edge $e$ and
$j \ne j'$;
\item $\max(A) = \mathcal{O}(|V|^{2\ell+2}r^{\ell+1})$;
\item $A \subseteq \mathbb{N}$ unless all the coefficients
$c_1 , \ldots , c_{\ell}$ have the same sign.
\end{enumerate}
\end{lemma}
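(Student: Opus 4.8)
The plan is to reuse the construction and verification of Lemma~\ref{construct-set} almost verbatim, the only change being that each edge now carries $r$ mutually independent copies of its edge-labels while the two endpoint labels are \emph{shared} across all copies. Concretely I would take a variable $X_v$ for each $v\in V$ and, for each edge $e=(v,w)$ with $v<w$ and each $j\in\{1,\dots,r\}$, variables $Y_{e,j}^0,\dots,Y_{e,j}^{\ell-3}$ constrained by the edge-equation $a_1X_v+a_2X_w+b_1Y_{e,j}^1+\cdots+b_{\ell-3}Y_{e,j}^{\ell-3}=b_0Y_{e,j}^0$, which gives Conditions~1--3. As before I impose Type~1 inequalities forcing all labels distinct and Type~2 inequalities forbidding every $\ell$-term combination $\lc(\s)$ that is neither identically zero nor an edge-sequence, substituting each dependent variable by $Y_{e,j}^0=(a_1X_v+a_2X_w+b_1Y_{e,j}^1+\cdots)/b_0$ to form the reduced combination $\rlc(\s)$. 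With $L=|V|+(\ell-2)r|E|=\mathcal{O}(|V|^2r)$ variables there are $M=\mathcal{O}(L^\ell)=\mathcal{O}(|V|^{2\ell}r^\ell)$ inequalities, and the greedy choice of labels bounds them by $L(M+1)b_0=\mathcal{O}(|V|^{2\ell+2}r^{\ell+1})$ with positivity in the mixed-sign case, exactly as in Lemma~\ref{construct-set}; this gives Conditions~2, 5 and~6.

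The substance lies in Condition~4: I must show that if $\rlc(\s)$ is identically zero then $\s$ is an edge-sequence, or $\lc(\s)$ is trivial, or we are in the stated exception. The opening reductions are those of Lemma~\ref{construct-set}: if no dependent variable occurs, or every dependent variable has total coefficient zero in $\lc(\s)$, then $\lc(\s)=\rlc(\s)$ is identically zero and $\s$ is trivial. Otherwise some $Y_{e,j}^0$ has nonzero coefficient, and then all of $Y_{e,j}^1,\dots,Y_{e,j}^{\ell-3}$ of the \emph{same} copy must appear in $\s$, since any absent $Y_{e,j}^k$ would survive in $\rlc(\s)$ with nonzero coefficient. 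This uses $\ell-2$ of the $\ell$ terms and leaves two; the substitution of $Y_{e,j}^0$ also contributes $X_v,X_w$ to $\rlc(\s)$, which must be cancelled.

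For $\ell\ge5$ the two leftover terms can support neither a second full edge-copy (needing $\ell-2\ge3$ terms) nor a partial one (an isolated free variable of another copy survives in $\rlc(\s)$, and a second dependent variable would again force all $\ell-3\ge2$ of \emph{its} free variables to appear); hence the two leftover terms are vertex variables, and to cancel $X_v,X_w$ they are the endpoints of $e$, so $\s$ is an edge-sequence and no exception occurs. The genuinely new case is $\ell=4$, where the two leftover terms may form a second edge-copy $(e',j')$ consisting of one dependent and one free variable. Here I would first show $e'=e$: if $e'\ne e$ some endpoint of $e'$ is not an endpoint of $e$ and acquires an uncancellable nonzero coefficient in $\rlc(\s)$. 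With $e'=e$ and $j\ne j'$, writing the $\lc(\s)$-coefficients of $Y_{e,j}^0,Y_{e,j}^1,Y_{e,j'}^0,Y_{e,j'}^1$ as $\gamma_1,\delta_1,\gamma_2,\delta_2$ (a permutation of $a_1,a_2,b_1,-b_0$), the requirement $\rlc(\s)\equiv0$ becomes $\gamma_1+\gamma_2=0$ together with $\delta_i=-(b_1/b_0)\gamma_i$, forcing $\{a_1,a_2,b_1,-b_0\}$ to split into two opposite pairs $\{\pm p\},\{\pm q\}$ with $q/p=b_1/b_0$.

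The main obstacle is the final elementary-but-delicate step: deducing that these constraints single out $-x_1+x_2+y_1=y_0$. Since two coefficients must be negative we have $a_1<0$, so the positive pair $\{a_2,b_1\}$ equals $\{-a_1,b_0\}$, yielding either (A) $a_2=-a_1,\ b_1=b_0$ or (B) $a_2=b_0,\ b_1=-a_1$. The decisive standard-form fact is that $b_0$ is the negation of the most negative original coefficient, so $-b_0\le a_1$, i.e.\ $b_0\ge-a_1$. In case~(B) this reads $b_0\ge b_1$, while $a_2=b_0\le b_1$ forces $a_2=b_1=b_0$, collapsing (B) into (A); in case~(A) the ratio condition $q/p=b_1/b_0=1$ gives $a_2=b_1$, whence $a_1=-a_2=-b_1=-b_0$ and, dividing by the common value, $\LL$ is $-x_1+x_2+y_1=y_0$. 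For this equation $(y_{e,j}^1,y_{e,j}^0,y_{e,j'}^1,y_{e,j'}^0)$ really is a non-trivial solution, because $y_{e,j}^0-y_{e,j}^1=x_w-x_v$ is copy-independent, which is exactly the advertised exception; for every other four-variable equation the two-copy configuration gives $\rlc(\s)\not\equiv0$, a contradiction, so Condition~4 holds in full.
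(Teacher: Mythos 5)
Your proposal is correct and takes essentially the same route as the paper's proof: the identical shared-vertex, $r$-copy construction with the same inequality/substitution machinery, the same counting argument $2(\ell-2)>\ell$ disposing of $\ell\ge 5$, and for $\ell=4$ the same reduction of the two-copy configuration to opposite coefficient pairs that forces the equation $-x_1+x_2+y_1=y_0$. If anything, your final step is slightly more explicit than the paper's, since you isolate the fact $-b_0\le a_1$ (that $b_0$ in Lemma~\ref{make-standard} is the negation of the most negative coefficient, a property of that construction rather than of the five listed standard-form conditions), which the paper invokes silently when it asserts the standard form must be $-qx_1+qx_2+py_1=py_0$.
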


\begin{proof}
The proof very largely follows that of Lemma~\ref{construct-set}. As
before we have a variable $X_v$ corresponding to each vertex value
$x_v$ and a variable $Y_{e,j}^i$ corresponding to each edge value
$y_{e,j}^i$; the set of all of these variables is $B$.

As before, consider any sequence
$\s = (W_1, W_2, Z_1, \ldots, Z_{\ell-3}, Z_0) \in B^{\ell}$.
We need to show that if $\rlc(\s)$ is identically zero, then either
$\s$ is an edge-sequence or $\lc(\s)$ is identically zero.
First suppose that $\ell \ge 5$. The only extra case to consider is
where there are two terms in the sequence which are dependent edge
variables for the same edge~$e$, but from different sets, that is, two
variables $Y_{e,j}^0, Y_{e,j'}^0$, with $j \ne j'$, both of which have
non-zero total coefficient. But then each requires the other terms
$Y_{e,j}^i, Y_{e,j'}^i$ for each $i \ne 0$ to occur as well, giving at
least $2(\ell-2)$ terms in all, which is impossible if $\ell \ge 5$
since then $2(\ell-2) > \ell$.

The case $\ell = 4$ is a little more complicated, because in this case
there could be further non-trivial solutions in one case only.
Consider any sequence
$\s = (W_1, W_2, Z_1, Z_0) \in B^{4}$, and suppose that
$\rlc(\s)$ is identically zero. Then
$\lc(\s) = a_1 W_1 + a_2 W_2 + b_1 Z_1 - b_0 Z_0$.
As above the only way this can happen, apart from an edge solution or
a trivial solution, is if two of the variables are $Y_{e,j}^0,
Y_{e,j'}^0$, with $j \ne j'$, in which case the other two must be
$Y_{e,j}^1, Y_{e,j'}^1$. Thus, for some permutation $(p_1,p_2,p_3,p_4)$
of $(a_1, a_2, b_1, -b_0)$, we have
\begin{equation}
\label{linear-comb}
\lc(\s) = p_1 Y_{e,j}^0 + p_2 Y_{e,j}^1 + p_3 Y_{e,j'}^0 + p_4 Y_{e,j'}^1 .
\end{equation}
If $e = (v,w)$, where $v < w$, then we know
that these variables must satisfy
\begin{align*}
a_1 X_v + a_2 X_w + b_1 Y_{e,j}^1 &= b_0 Y_{e,j}^0
\\
a_1 X_v + a_2 X_w + b_1 Y_{e,j'}^1 &= b_0 Y_{e,j'}^0  .
\end{align*}
Hence substituting for $Y_{e,j}^0$ and $Y_{e,j'}^0$ in~\eqref{linear-comb}, we find that
\begin{align}
\nonumber
\rlc(\s) &=
p_1
\left(\frac{a_1}{b_0}X_v +\frac{a_2}{b_0}X_w + \frac{b_1}{b_0}Y_{e,j}^1 \right)
+ p_2 Y_{e,j}^1 \\
\label{reduced-linear-comb}
&+ p_3
\left(\frac{a_1}{b_0}X_v +\frac{a_2}{b_0}X_w + \frac{b_1}{b_0}Y_{e,j'}^1 \right)
+ p_4 Y_{e,j'}^1 .
\end{align}
Since this is identically zero, we must have $p_1 + p_3 = 0$, and so
since $p_1 \frac{b_1}{b_0} + p_2 = 0$ and
$p_3 \frac{b_1}{b_0} + p_4 = 0$, we also have $p_2 + p_4 = 0$.
Hence the coefficients are $\pm p, \pm q$ for some $p \ge q \ge 0$,
and so the standard form is
\[
-q x_1 + q x_2 + p y_1 = p y_0 .
\]
Hence we have $b_1 = b_0$, so that $p_1 + p_2 = 0$. Hence all the
coefficients are $\pm p_1$, so $p=q$ and so
(after dividing by $p$), the equation in standard form is
\[
- x_1 + x_2 + y_1 = y_0 .
\]
Thus in this case only, we have $\lc(\s)$ is not identically zero, and so
the values $y_{e,j}^0, y_{e,j}^1, y_{e,j'}^0, y_{e,j'}^1$ for any
$j \ne j'$, give further non-trivial solutions to $\LL$ in $A$.
\end{proof}

The problem $\#$\textsc{Independent Set} is defined as follows.

\medskip
\begin{framed}
\noindent $\#$-\textsc{Independent Set}\newline
\textit{Input:} A graph $G$. \newline
\textit{Output:} The number of independent sets of $G$.
\end{framed}

When stated in terms of logical variables and clauses, this problem is better known as $\#$\textsc{Monotone 2-SAT}.
The equivalence of the two is demonstrated by replacing logical variables by vertices and clauses by edges. Then satisfying assignments correspond to independent sets via the correspondence mapping the set of variables assigned false in a truth assignment to the corresponding subset of the vertices. It was shown to be $\#P$-complete by Valiant~\cite{Valiant}.

We shall need the following simple lemma similar to part of Fact~6 from~\cite{Valiant}.
\begin{lemma}\label{lem:easyNT}
Let $z_0,\ldots,z_m$ be non-negative integers and let $p > \sum_{t=0}^m z_t$ with  $p\in \rats$. Then for each $i$, $z_i$ is uniquely determined by $\sum_{t=0}^m z_t p^t$ and $z_{i+1},\ldots,z_m$, and may be found in polynomial time.
\end{lemma}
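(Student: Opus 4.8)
The plan is to view the quantity $N = \sum_{t=0}^m z_t p^t$ as a radix-$p$ expansion in which the hypothesis $p > \sum_{t=0}^m z_t$ forces each ``digit'' $z_t$ to be strictly smaller than the base $p$. Recovering $z_i$ then reduces to isolating the relevant digit, working from the top down using the digits $z_{i+1},\ldots,z_m$ that are already known.

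Concretely, I would first dispose of the degenerate case: if $\sum_{t=0}^m z_t = 0$ then every $z_t = 0$ and there is nothing to prove, so I may assume the sum is at least $1$, whence $p > 1$. Given the value $N$ together with $z_{i+1},\ldots,z_m$, I would then form
\[
N_i = N - \sum_{t=i+1}^m z_t p^t = \sum_{t=0}^i z_t p^t,
\]
which is computable in polynomial time by rational arithmetic. Writing $N_i = z_i p^i + R$ with $R = \sum_{t=0}^{i-1} z_t p^t$, the crux is the bound $0 \le R < p^i$. This follows from $p \ge 1$, since $p^t \le p^{i-1}$ for $0 \le t \le i-1$ gives
\[
R \le p^{i-1} \sum_{t=0}^{i-1} z_t \le p^{i-1} \sum_{t=0}^m z_t < p^{i-1}\cdot p = p^i.
\]
Consequently $N_i / p^i = z_i + R/p^i$ with $0 \le R/p^i < 1$, and as $z_i$ is an integer we recover $z_i = \lfloor N_i / p^i \rfloor$, uniquely and in polynomial time.

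The argument is essentially routine; the only points needing care are that $p$ is merely rational rather than a positive integer, so the floor must be taken of a rational number (still polynomial-time computable), and that one cannot assume $p \ge 1$ outright, which is why the all-zero case is separated off at the start. Beyond keeping this bookkeeping straight, I do not anticipate any genuine obstacle.
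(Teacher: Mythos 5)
Your proposal is correct and follows essentially the same route as the paper: subtract the known high-order terms to isolate $\sum_{t=0}^i z_t p^t$, bound the tail $\sum_{t=0}^{i-1} z_t p^t$ strictly below $p^i$ using $p > \sum_t z_t$, and recover $z_i$ as $\bigl\lfloor \sum_{t=0}^{i} z_t p^t / p^i \bigr\rfloor$. Your explicit treatment of the all-zero case (where $p$ could be a rational below $1$) is a small point of care that the paper glosses over, but it does not change the argument.
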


\begin{proof}
If $\sum_{t=0}^m z_t p^t$ and $z_{i+1},\ldots,z_m$ are known, then one may compute
$\sum_{t=0}^i z_t p^t$. Moreover $p^i > p^{i-1} \sum_{t=0}^m z_t \geq \sum_{t=0}^{i-1} z_t p^t$. Hence $z_i = \lfloor
\sum_{t=0}^{i} z_t p^t / p^i\rfloor$.
\end{proof}

Now we can show $\#\LL$-\textsc{Free Subset} is $\#P$-complete for any
homogeneous equation with at least three variables. Again there are
two cases, using each of the constructions above.

\begin{thm}~\label{thm:counthom}
Let $\LL$ be the linear equation $c_1x_1+c_2x_2+c_3x_3 =0$,
where the coefficients $c_i$ are all non-zero.
Then $\#\LL$-\textsc{Free Subset} is $\#P$-complete. If
the coefficients of $\LL$ are not all of the same sign, then the input
set $A$ can be restricted to be a subset of $\nats$.
\end{thm}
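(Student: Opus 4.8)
The plan is to give a polynomial-time Turing reduction from $\#$-\textsc{Independent Set} (equivalently $\#$\textsc{Monotone 2-SAT}), which Valiant proved $\#P$-complete, to $\#\LL$-\textsc{Free Subset}. Membership in $\#P$ is immediate: given a subset $S\subseteq A$ one can check in time $O(|A|^3)$ whether $S$ contains a non-trivial solution, so the count of $\LL$-free subsets is the number of accepting branches of a nondeterministic machine that guesses $S$ and verifies freeness. I would first put $\LL$ into standard form $a_1x_1+a_2x_2=b_0y_0$ using Lemma~\ref{make-standard}; the $\LL$-free subsets are unchanged. Given a graph $G=(V,E)$ and a parameter $r$, let $A_r=A_V\cup A_E\cup U$ be the set produced by Lemma~\ref{lem:counting3.2}.

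The key step is to count the $\LL$-free subsets of $A_r$ by conditioning on the vertex part. For $T\subseteq V$ consider the $\LL$-free subsets $S$ with $\{v:x_v\in S\}=T$, and let $m(T)$ be the number of edges with both endpoints in $T$. By Condition~5 of Lemma~\ref{lem:counting3.2}, every non-trivial solution uses some edge label $y_e$ together with either $(x_v,x_w)$ or a single pair $(u_{v,e,i},u_{w,e,i})$; once $T$ is fixed, the membership constraints therefore \emph{decouple completely across edges}, and I can count completions edge by edge. Fix an edge $e=vw$. If $y_e\notin S$, then no solution through $e$ is present and the $2r$ variables $u_{\cdot,e,\cdot}$ are free, giving $2^{2r}=4^r$ completions. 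If $y_e\in S$ and both endpoints lie in $T$, then $(x_v,x_w,y_e)$ is a forbidden solution, so this case cannot occur. If $y_e\in S$ and at most one endpoint lies in $T$, then for each $i$ the pair $\{u_{v,e,i},u_{w,e,i}\}$ must not both be chosen, giving $3$ choices per $i$ and $3^r$ in all. Thus each \emph{violated} edge (both endpoints in $T$) contributes a factor $4^r$ and every other edge contributes $4^r+3^r$, so that
\[
Z_r:=\#\{\LL\text{-free subsets of }A_r\}=\sum_{T\subseteq V}(4^r)^{m(T)}\,(4^r+3^r)^{|E|-m(T)}.
\]

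To finish, I would recover the number of independent sets by interpolation. Let $I_j$ be the number of $T\subseteq V$ with exactly $j$ violated edges, so $I_0$ is the number of independent sets of $G$. Writing $b_r=4^r+3^r$ and $x_r=4^r/b_r$, we have $Z_r/b_r^{|E|}=\sum_{j=0}^{|E|}I_j x_r^{\,j}=:P(x_r)$, a polynomial of degree at most $|E|$ whose constant term is $I_0$. I would query the oracle for $r=1,\ldots,|E|+1$, obtaining exact integers $Z_r$; since $b_r$ is a known integer, each $P(x_r)$ is computed exactly in $\rats$, and the abscissae $x_r=1/(1+(3/4)^r)$ are pairwise distinct, so Lagrange interpolation from these $|E|+1$ values recovers $P$ and hence $I_0$. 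Everything is polynomial: for $r\le|E|+1$ we have $|A_r|=|V|+(1+2r)|E|=O(|V|^4)$, so each $Z_r$ has polynomially many bits, there are $O(|V|^2)$ queries, and the interpolation is over rationals of polynomial size. When the coefficients of $\LL$ are not all of the same sign, Condition~7 of Lemma~\ref{lem:counting3.2} gives $A_r\subseteq\nats$, yielding the final assertion.

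I expect the main obstacle to be the edge-by-edge counting argument: specifically, justifying that conditioning on $T$ makes the gadgets fully independent, which rests entirely on Condition~5 (the absence of any spurious solution that would link two edges), and then recognising the resulting linear system as a scaled Vandermonde system in the distinct points $x_r$, so that a polynomial of degree $|E|$ can be reconstructed from polynomially many oracle calls.
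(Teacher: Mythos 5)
Your proposal is correct, and it rests on the same reduction skeleton as the paper --- reduce from $\#$\textsc{Independent Set} via the amplified gadget of Lemma~\ref{lem:counting3.2} --- but both your counting decomposition and, more importantly, your extraction step differ genuinely from the paper's. The paper conditions on $\LL$-free subsets of $A_V\cup A_E$, graded by how many edge labels they contain, obtaining the count $3^{rm}\sum_{t=0}^{m}(4/3)^{rt}z_t$ (with $m=|E|$, $n=|V|$), where $z_0$ is the number of independent sets; it then makes a \emph{single} oracle call with one large amplification parameter $r$ (the least integer exceeding $(m+n)\ln 2/\ln(4/3)$) and recovers $z_0$ by base-$(4/3)^r$ digit extraction (Lemma~\ref{lem:easyNT}), which requires the a priori bound $\sum_t z_t\le 2^{m+n}$ and hence forces $r=\Theta(m+n)$. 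You instead condition on the vertex part $T\subseteq V$, which decouples the edges exactly as you argue via Condition~5 of Lemma~\ref{lem:counting3.2}, obtain $Z_r=\sum_j I_j\,(4^r)^{j}(4^r+3^r)^{|E|-j}$ --- an identity equivalent to the paper's, since summing the paper's extension counts $3^{rt}4^{r(m-t)}$ over the choices of edge labels available for a fixed $T$ reproduces your factor $4^{r\,m(T)}(4^r+3^r)^{|E|-m(T)}$ --- and recover $I_0$ by Lagrange interpolation at the $|E|+1$ distinct points $x_r=1/(1+(3/4)^r)$, $r=1,\ldots,|E|+1$. Your route costs polynomially many oracle calls instead of one, but buys two things: each constructed instance needs only a small amplification parameter, and no a priori bound on the coefficients $I_j$ is required, since interpolating a degree-$|E|$ polynomial through exact rational data is unconditional; the paper's route is a one-query reduction but needs the counting bound to justify its choice of $r$. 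Both versions yield strong $\#P$-hardness (all integers polynomially bounded) and the restriction to $A\subseteq\nats$ via Condition~7. It is worth noting that your interpolation-over-many-$r$ technique is precisely the one the paper itself deploys in Theorem~\ref{thm:counthomlge4} to handle the exceptional equation $-x_1+x_2+y_1=y_0$ (there phrased as a Vandermonde system solved via~\cite{Edmonds}), so both extraction methods appear in the paper, just assigned to different cases.
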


\begin{proof}
We reduce from $\#$\textsc{Independent Set}. Let $G=(V,E)$ be an instance of $\#$\textsc{Independent Set}.
Let $m=|E|$, $n=|V|$ and $r$ be the smallest integer such that
\[ r > (m+n) \frac {\ln 2}{\ln (4/3)}.\]
We construct, in polynomial time, an instance of $\#\LL$-\textsc{Free Subset} by applying Lemma~\ref{lem:counting3.2}. If the coefficients of $\LL$ are not all of the same sign, then the set given by Lemma~\ref{lem:counting3.2} is a subset of $\nats$. Write $A=A_E\cup A_V \cup U$, using the same notation as Lemma~\ref{lem:counting3.2}.
We shall count the number of ways that an $\LL$-free subset $S$ of $A_E\cup A_V$ may be extended to an $\LL$-free subset of $A$. Notice that the only solutions to $\LL$ in $A$ involving either
$u_{v,e,i}$ or $u_{w,e,i}$
include $y_e$ and both $u_{v,e,i}$ and $u_{w,e,i}$. Thus if $y_e\in S$, then one or other
but not both of $u_{v,e,i}$ and $u_{w,e,i}$ may be added to $S$ while maintaining $\LL$-freeness.
If $y_e\notin S$, then any subset of $\{u_{v,e,i},u_{w,e,i}\}$ may be added to $S$ while maintaining $\LL$-freeness. Suppose that $|S\cap A_E|=t$. Then $S$ may be extended to an $\LL$-free subset of $A$ in $3^{rt}4^{r(m-t)}$ ways. Let $z_t$ denote the number of $\LL$-free subsets of $A_E\cup A_V$ with $|S\cap A_E|=m-t$.
Then the number of $\LL$-free subsets of $A$ is
\[ \sum_{t=0}^m 3^{rt}4^{r(m-t)}z_{m-t} = \sum_{t=0}^m 3^{rm} (4/3)^{rt}z_{t} .\]
Now let $S$ be any subset of $A_E\cup A_V$. If $A_E\subseteq S$, then $S$ is $\LL$-free if and only if $S\cap A_V$ is an independent set of $G$.
Thus the number of independent sets of $G$ equals $z_0$. Lemma~\ref{lem:easyNT} implies that
providing $(4/3)^r > \sum_{t=0}^m z_t$, $z_0$ may be found from $\sum_{t=0}^m (4/3)^{rt}z_{t}$ which itself is easily computed from the number of $\LL$-free subsets of $A$.
As $\sum_{t=0}^m z_t \leq  2^{|A_E|+|A_V|} = 2^{m+n}$, it is sufficient to choose
$r$ so that $r > (m+n) \frac {\ln 2}{\ln (4/3)}$.
\end{proof}

\begin{thm}~\label{thm:counthomlge4}
Let $\LL$ be the linear equation $c_1x_1+\ldots+c_\ell x_\ell
=0$, where $\ell \geq 4$ and the coefficients $c_i$ are all non-zero.
Then $\#\LL$-\textsc{Free Subset} is $\#P$-complete. If the
coefficients of $\LL$ are not all of the same sign, then the
input set $A$ can be restricted to be a subset of $\nats$.
\end{thm}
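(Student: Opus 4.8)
The plan is to reduce from $\#$\textsc{Independent Set} (equivalently $\#$\textsc{Monotone 2-SAT}, shown $\#P$-complete by Valiant), mirroring the proof of Theorem~\ref{thm:counthom} but using the amplifying construction of Lemma~\ref{construct-set-amplify-edges} in place of Lemma~\ref{lem:counting3.2}. Membership of $\#\LL$-\textsc{Free Subset} in $\#P$ is routine: a subset is a polynomially-checkable witness, since for fixed $\ell$ one can test in time $\mathcal{O}(|A|^{\ell})$ whether it contains a non-trivial solution. Given a graph $G=(V,E)$ with $n=|V|$ and $m=|E|$, I apply Lemma~\ref{construct-set-amplify-edges} with a parameter $r$ to obtain $A=A_V\cup A_E$, where $A_E$ holds $r$ independent copies of each edge-gadget; the restriction $A\subseteq\nats$ when the coefficients are not all of one sign is inherited directly from part~6 of that lemma. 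For any vertex subset $W\subseteq V$ I count the $\LL$-free subsets $S$ with $S\cap A_V=\{x_v:v\in W\}$ edge by edge, the key point being that the relevant solutions are governed entirely by part~4 of the lemma.

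First I would treat the generic case, where the standard form of $\LL$ is not the exceptional equation $-x_1+x_2+y_1=y_0$; here part~4 guarantees that the only non-trivial solutions are single-copy edge-solutions. Writing $q=2^{\ell-2}$, an edge $e=(v,w)$ with both endpoints in $W$ forbids, in each of its $r$ copies, the simultaneous presence of all $\ell-2$ edge-labels, contributing a factor $(q-1)^{r}$, while an edge with an endpoint outside $W$ contributes $q^{r}$. Grouping vertex subsets by the number $s$ of edges they induce and letting $N_s$ count such subsets, the oracle returns $\sum_s N_s (q-1)^{rs} q^{r(m-s)}$. After dividing by $(q-1)^{rm}$ this is a polynomial in $P=(q/(q-1))^{r}$ whose top coefficient is $N_0$, the number of independent sets. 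Since $q\ge 4$ the base $q/(q-1)$ exceeds $1$, so choosing $r=\mathcal{O}(n)$ large enough that $P>2^{n}$ lets me recover $N_0$ from a single oracle call via Lemma~\ref{lem:easyNT}, exactly as in the proof of Theorem~\ref{thm:counthom}.

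The main obstacle is the exceptional equation $\LL:-x_1+x_2+y_1=y_0$, that is (up to the normalisation of Lemma~\ref{make-standard}) the equation $x+y=z+w$. Here the edge equation forces $y_{e,j}^0-y_{e,j}^1=x_w-x_v$ to be independent of the copy $j$, so by part~4 of Lemma~\ref{construct-set-amplify-edges} any two distinct ``full'' copies of the same edge (those with both labels present) form an extra non-trivial solution. Consequently each edge may have at most one full copy, and an edge with both endpoints in $W$ may have none. A short count then gives a factor $3^{r}$ for such an edge and $3^{r}+r\,3^{r-1}=3^{r-1}(r+3)$ for every other edge, so the oracle returns $\sum_s N_s\,3^{(r-1)m+s}(r+3)^{m-s}$. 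The separation between the two per-edge factors is now only linear in $r$, so the single-query extraction above would demand $r$ exponential in $n$ and fails.

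To overcome this I would recover the $N_s$ by interpolation rather than a single evaluation. Dividing the oracle's output by $3^{(r-1)m}(r+3)^{m}$ yields exactly $h\!\left(3/(r+3)\right)$, where $h(x)=\sum_{s=0}^{m} N_s x^{s}$ has degree at most $m$. Querying the oracle for the $m+1$ distinct parameters $r=1,\ldots,m+1$ therefore evaluates $h$ at $m+1$ distinct points, so Lagrange interpolation over $\rats$ recovers every coefficient $N_s$, and in particular $N_0$. Each constructed set has size polynomial in $n$ and $m$ with polynomially-bounded entries (by parts~2 and~5 of Lemma~\ref{construct-set-amplify-edges} with $r\le m+1$), and all $m+1$ oracle outputs are at most $2^{|A|}$, so the interpolation runs in polynomial time. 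This settles the exceptional case and, together with the generic case, completes the proof.
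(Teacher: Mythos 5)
Your proposal is correct and takes essentially the same route as the paper: the same reduction from $\#$\textsc{Independent Set} via Lemma~\ref{construct-set-amplify-edges}, the same per-edge counting factors $(2^{\ell-2}-1)^r$ versus $(2^{\ell-2})^r$ with a single large-$r$ query and Lemma~\ref{lem:easyNT} in the generic case, and the same treatment of the exceptional equation $-x_1+x_2+y_1=y_0$ with factors $3^r$ versus $3^{r-1}(r+3)$. Your Lagrange interpolation over $m+1$ values of $r$ is exactly the paper's device of solving the resulting linear system with non-zero Vandermonde determinant, so the two arguments coincide in substance.
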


\begin{proof}
We reduce from $\#$\textsc{Independent Set}. Let $G=(V,E)$ be an
instance of $\#$\textsc{Independent Set}.
Let $m=|E|$, $n=|V|$.
We construct, in polynomial time, an instance of $\#\mathcal
L$-\textsc{Free Subset} by applying Lemma~\ref{construct-set-amplify-edges}. If
the coefficients of $\LL$ are not all of the same sign, then
the set given by Lemma~\ref{construct-set-amplify-edges} is a subset of $\nats$.
Write $A=A_V\cup A_E$, using the same notation as
Lemma~\ref{construct-set-amplify-edges}.
We shall count the number of ways that an $\LL$-free subset $S$
of $A_V$ may be extended to an $\LL$-free subset of $A$.
We will say that a subset $S$ of $A_V$ contains an edge $e=(v,w)$ if
$x_v, x_w \in S$.

We first deal with the case where $\LL$ is not equivalent to the
equation $x_1 + x_2 = x_3 + x_4$.
Consider a subset $S \subseteq A_V$ which
contains~$t$ edges. Then for each such edge~$e \in S$, we can add any set
of edge values $y_{e,j}^i$ to $S$ provided that we do not add a full
set $\{y_{e,j}^1, \ldots , y_{e,j}^{\ell-3}, y_{e,j}^0\}$ for any $j$,
hence there is a choice of $(2^{\ell-2}-1)^r$ subsets for each of
these $t$ edges.
On the other hand, for any edge $e$ not in $S$, we can add any subset
of the edge values for $e$, hence there is a choice of
$(2^{\ell-2})^r$ subsets. Thus if $z_t$ is the number of subsets of
$A_V$ containing $m-t$ edges, then the number of $\LL$-free subsets of
$A$ is
\[
\sum_{t=0}^m (2^{\ell-2}-1)^{r(m-t)}(2^{\ell-2})^{rt}z_{t}
=
(2^{\ell-2}-1)^{rm}
\sum_{t=0}^m z_{t}
\left(\frac{2^{\ell-2}}{2^{\ell-2}-1}\right)^{rt} .
\]
Hence if $r$ is the smallest integer such that
\[
r > \frac {n \ln 2}{\ln (2^{\ell-2}) - \ln(2^{\ell-2}-1)},
\]
then by Lemma~\ref{lem:easyNT}, we can determine $z_m$ from this. But
$z_m$ is the number of subsets of $A_V$ containing no edges, that is, the
number of independent sets of $G$, as required.

Now consider the special case of the equation $-x_1 + x_2 + y_1 = y_0$.
Again consider a subset $S \subseteq A_V$ which contains~$t$ edges. As
before, for each of these edges, we can add any set of edge values
$y_{e,j}^i$ to $S$ provided that we do not add a full set
$\{y_{e,j}^1, y_{e,j}^0\}$ for any $j$, hence there is a choice of
$3^r$ subsets for each of these $t$ edges.

For any edge $e$ not in $S$, we can add all of these sets, but we could
also add any set which includes $\{y_{e,j}^1, y_{e,j}^0\}$ for exactly
one value of $j$. Hence in total we can add $3^r + r.3^{r-1}$ subsets.
No other sets are possible because these would contain the special
extra non-trivial solutions to $\LL$. Thus as before, if $z_t$ is the
number of subsets of $A_V$ containing $m-t$ edges, then the number of
$\LL$-free subsets of $A$ is
\[
\sum_{t=0}^m (3^r)^{m-t}((r+3)3^{r-1})^{t}z_{t}
=
3^{mr} \sum_{t=0}^m \left(\frac{r+3}{3}\right)^t z_{t} .
\]
Hence if we can determine this number for $m+1$ different values of
$r$, we have a system of $m+1$ equations in the quantities $z_0,
\ldots, z_m$. Since the coefficients have a non-zero Vandermonde
determinant, the solution is unique and can be determined in
polynomial time in the total size of the coefficients~\cite{Edmonds}.
Hence we can determine $z_m$ as before.
\end{proof}

We now consider the counting version of $\LL$-\textsc{Free Subset}
when $\LL$ is an inhomogeneous equation. We shall need a final variant of
Lemma~\ref{construct-set}.

\begin{lemma}\label{lem:counting3.2inhom}
Consider a linear equation $c_1x_1+\cdots+c_\ell x_\ell =K$, where $\ell \geq 3$ and the coefficients $c_i$ and constant $K$ are all non-zero, with $\gcd(c_1,\ldots,c_\ell )$ a divisor of $K$.
Let $G=(V,E)$ be a bipartite graph and let $r\in \nats$.
Then we can construct in polynomial time, a set $A\subseteq \ints$ with the following properties.
\begin{enumerate}
\item $A$ is the union of $\ell $ sets
$A_V,\,A_E^0,\,A_E^1,\ldots,A_E^{\ell -3}$ and $U$, where
$A_V=\{x_v:v\in V\}$, $A_E^i=\{y_e^i:e\in E\}$ for each
$i=0,\,1,\ldots,\ell -3$ and
$U=\{u_{v,e,i}:v\in V,\, e=vw\in E,\,1\leq i\leq r\}$;
\item $|A| = |V| + (\ell-2+2r)|E|$;
\item for every edge $e=vw$, some permutation of
$(x_v,x_w,y_e^1,\ldots,y_e^{\ell -3},y_e^0)$ is a solution to $\LL$;
\item for every edge $e=vw$ and every integer $i$ with $1\leq i\leq r$,
some permutation of $(u_{v,e,i},u_{w,e,i},y_e^1,\ldots,y_e^{\ell
-3},y_e^0)$ is a solution to $\LL$;
\item if $(z_1,\ldots,z_\ell )$ is a solution to $\LL$, then
$(z_1,\ldots,z_\ell )$ is a permutation of
$(x_v,x_w,y_e^1,\ldots,y_e^{\ell -3},y_e^0)$ for some edge $e=vw$ or
of $(u_{v,e,i},u_{w,e,i},y_e^1,\ldots, y_e^{\ell -3},y_e^0)$ for some
edge $e=vw$ and integer $i$ with $1 \leq i \leq r$.
\item $\max(A)=\mathcal{O}(|V|^{2\ell +2}r   )$;
\item $A\subseteq \nats$ unless all the coefficients
$c_1,\ldots,c_\ell $ have the same sign.
\end{enumerate}
\end{lemma}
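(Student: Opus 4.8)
The plan is to graft the edge-amplification idea of Lemmas~\ref{lem:counting3.2} and~\ref{construct-set-amplify-edges} onto the inhomogeneous gadget of Lemma~\ref{construct-set-inh}. Since $G$ is bipartite I would fix a $2$-colouring and attach one colour class to the coefficient $c_1$ and the other to $c_2$, so that for every edge $e=vw$ the endpoint in the first class always plays the role of $c_1$, the endpoint in the second always plays the role of $c_2$, the dependent label $y_e^0$ always plays the role of $c_3$, and $y_e^1,\ldots,y_e^{\ell-3}$ are attached to $c_4,\ldots,c_\ell$. This uniform role assignment is precisely what bipartiteness provides: in Lemma~\ref{construct-set-inh} the roles of $c_1,c_2,c_3$ among the two endpoints and the dependent label varied from edge to edge, whereas here they are fixed, which is what makes the amplifier well defined. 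For each edge $e=vw$ and each $i$ with $1\le i\le r$ I would introduce
\[
u_{v,e,i}=x_v+c_2N_{e,i},\qquad u_{w,e,i}=x_w-c_1N_{e,i},
\]
for distinct positive integers $N_{e,i}$. Because $c_1(c_2N_{e,i})+c_2(-c_1N_{e,i})=0$, replacing $(x_v,x_w)$ by $(u_{v,e,i},u_{w,e,i})$ in an edge solution leaves $\sum_jc_jz_j$ unchanged, so Condition~4 holds; Conditions~1--3, the integrality of the dependent labels, and the positivity statement of Condition~6 are obtained exactly as in Lemma~\ref{construct-set-inh}, keeping each $N_{e,i}$ small enough that every $u$ stays near its parent vertex label.

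The substance of the lemma is Condition~5. Here the difficulty is that I cannot imitate Lemma~\ref{lem:counting3.2} by multiplying a ready-made set by a large integer $N$: scaling sends $\sum_jc_jx_j=K$ to $\sum_jc_jx_j=NK$ and so destroys solutions of an inhomogeneous equation. This is the main obstacle, and the way around it is to build the labels in the offset form $z=N\hat z+o_{\rho(z)}$, where $N$ is a large modulus divisible by $c_1c_2c_3$, the reduced labels $\hat z$ are the output of the homogeneous construction of Lemma~\ref{construct-set} applied to $G$ (with the same role assignment), and $(o_1,\ldots,o_\ell)$ is a fixed integer tuple with $\sum_jc_jo_j=K$, which exists because $\gcd(c_1,\ldots,c_\ell)$ divides $K$. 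With this decomposition a base edge solution of $\LL$ corresponds exactly to a homogeneous edge solution of the reduced labels, the offsets contributing precisely $K$ and the reduced part vanishing.

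To verify Condition~5 I would take any solution $(z_1,\ldots,z_\ell)$ in $A$, round every $u$ back to its parent vertex label to obtain a tuple $\bar z$ of base labels, and write $z_j=N\widehat{\bar z_j}+o_{\rho(\bar z_j)}+\pi_j$, where each perturbation $\pi_j\in\{0,\pm c_1N_{e,i},\pm c_2N_{e,i}\}$. Substituting into $\sum_jc_jz_j=K$ gives
\[
NH=K-O-P,\qquad H=\sum_jc_j\widehat{\bar z_j},\quad O=\sum_jc_jo_{\rho(\bar z_j)},\quad P=\sum_jc_j\pi_j,
\]
where $H$ is an integer, $O$ lies in a bounded range, and $|P|\le\ell c^2\max_{e,i}N_{e,i}$ with $c=\max_j|c_j|$. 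Choosing $N$ larger than $|K|+|O|+|P|$ forces $H=0$, so the reduced tuple is a solution of the homogeneous equation and, by Lemma~\ref{construct-set}, is a reduced edge solution for some edge $e=vw$ (or, only when $\LL$ is translation invariant, a trivial solution). In the edge-solution case only the two vertex positions can carry perturbations; for the role-consistent labelling $O=K$, which forces $P=0$, and the cancellation identity then leaves either no vertex perturbed (an edge solution) or both perturbed with $c_1c_2(N_{e,i}-N_{e,i'})=0$, i.e.\ $i=i'$ (an amplifier solution). Crucially, the inhomogeneity $K\neq0$ is exactly what kills the spurious two-copy solutions $(y_{e,j}^1,y_{e,j}^0,y_{e,j'}^1,y_{e,j'}^0)$ that survive for the homogeneous equation $-x_1+x_2+y_1=y_0$ in Lemma~\ref{construct-set-amplify-edges}, so no exceptional case appears in the present statement.

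The remaining work is routine. The trivial-reduced case can only arise when $\LL$ is translation invariant, and there the distinctness of the reduced labels together with $H=0$ and $K\neq0$ again leaves only the edge and amplifier patterns; a polynomial number of further coincidences among the $N_{e,i}$ (such as spurious values of $P$ realising $K-O$ for a wrong role assignment) are avoided by a greedy choice, which may be taken within $\{1,\ldots,r|E|\}$. Tracking the sizes as in Lemma~\ref{construct-set-inh}, with $\max_{e,i}N_{e,i}=\mathcal{O}(|V|^2r)$ and $N=\mathcal{O}(|V|^2r)$, then yields $\max(A)=\mathcal{O}(|V|^{2\ell+2}r)$, establishing Condition~5 and the size bound.
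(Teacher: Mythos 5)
Your proposal takes a genuinely different route from the paper's, and the difference is exactly where the trouble lies. The paper does not rebuild the base set: it applies Lemma~\ref{construct-set-inh} with $V_3=\emptyset$ as a black box, so the base labels already solve the inhomogeneous equation, and then adds amplifiers $u_{v,e,i}=x_v-NN_{e,i}c_2$, $u_{w,e,i}=x_w+NN_{e,i}c_1$ whose perturbations are \emph{multiples of a modulus} $N=2\ell c\max_{a\in A}|a|+1$ exceeding every base label. Reducing any putative solution modulo $N$ then collapses it onto an exact base solution (since $|\LL(\bar z_1,\ldots,\bar z_\ell)|<N/2$), and because an inhomogeneous equation has no trivial solutions, that base solution is an edge solution; only its two vertex positions can carry perturbations, these must cancel \emph{exactly}, and this is excluded by the purely pairwise conditions~\eqref{eq:constinhom}, so each $N_{e,i}$ faces only $\mathcal{O}(|E|r\ell^2)$ forbidden values. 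You instead rebuild the base set as $z=N\hat z+o_{\rho(z)}$ over the homogeneous construction of Lemma~\ref{construct-set} and use small perturbations $\pm c_jN_{e,i}$. That scheme can be made to work for Conditions 1--5 and 7 (with two repairs: the bipartition-consistent roles inside Lemma~\ref{construct-set} should be justified by choosing the vertex ordering so that all of $V_1$ precedes $V_2$, which also preserves that lemma's positivity argument; and positivity of the final labels follows from $\hat z\ge 1$ and $N$ large, not ``exactly as in Lemma~\ref{construct-set-inh}'', whose machinery you never invoke).

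The genuine gap is Condition~6. First, because your perturbations are not multiples of $N$, forcing $H=0$ only shows the reduced tuple solves the \emph{homogeneous} equation, which has trivial solutions whenever a sub-sum of the $c_j$ vanishes (e.g.\ for $x_1+x_2-x_3-x_4=K$); in such a tuple every position may be a perturbed vertex label, so the avoidance conditions $P\neq K-O$ are affine relations involving up to $\ell$ of the $N_{e,i}$ simultaneously. A greedy choice still disposes of them for fixed $\ell$, but when the last variable of such a relation is chosen it can face on the order of $(r|E|)^{\ell-1}$ forbidden values, so your claim that the $N_{e,i}$ can be taken in $\{1,\ldots,r|E|\}$ is unfounded. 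Second, even granting $N_{e,i}=\mathcal{O}(|V|^2r)$ and $N=\mathcal{O}(|V|^2r)$, your construction rescales the \emph{entire} base set by $N$, so $\max(A)\geq N\cdot\max\hat z=\mathcal{O}(|V|^2r)\cdot\mathcal{O}(|V|^{2\ell+2})=\mathcal{O}(|V|^{2\ell+4}r)$, already exceeding the stated bound $\mathcal{O}(|V|^{2\ell+2}r)$; the paper stays within the bound precisely because its base set (of size $\mathcal{O}(|V|^{2\ell})$ from Lemma~\ref{construct-set-inh}) is never rescaled and only the amplifiers acquire the factor $NN_{e,i}=\mathcal{O}(|V|^{2\ell})\cdot\mathcal{O}(|V|^2r)$. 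Your argument does yield a polynomially bounded set, which suffices for the downstream $\#P$-hardness application, but it does not prove the lemma as stated; the clean fix is to adopt the paper's design, i.e.\ take the inhomogeneous base from Lemma~\ref{construct-set-inh} and make all perturbations multiples of a modulus larger than every base label, which simultaneously eliminates the trivial-solution cases and the multi-variable avoidance conditions.
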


\begin{proof}
If the coefficients of $\LL$ do not all have the same sign, then we may assume that $c_1$ is positive and $c_2$ is negative.
Apply Lemma~3.8, by regarding $G$ as a tripartite graph with $V_3=\emptyset$, giving sets $A_V$ and $A_E$ as described in Lemma~3.8.
Let $c=\max\{|c_1|,\ldots,|c_\ell |\}$ and $N=2\ell c\max_{a \in A}|a|+ 1$.

Next for each $e$ in $E$ and integer $i$ with $1\leq i \leq r$ choose a strictly positive integer $N_{e,i}$
so that if $(e_1,i_1) \neq (e_2,i_2)$ and $j_1\ne j_2$, then the following holds
\begin{equation}
c_{j_1}c_1N_{e_1,i_1} - c_{j_2}c_2N_{e_2,i_2} \ne 0.  \label{eq:constinhom}
\end{equation}
In particular, the integers $N_{e,i}$ are pairwise distinct.
We may choose the integers $N_{e,i}$ greedily in any order. When one of them is chosen, there are strictly fewer than $2|E|r \ell^2$ linear equations that it must not satisfy. Hence one may choose
these integers
from $\{1,\ldots, 2|E|r\ell^2\}$.

We now construct the set $U$. Start with $U$ being empty, and for each edge of $G$, add $2r$ elements to it.
Suppose that the edge $e$ of $G$ joins $v$ and $w$, where $v\in V_1$ and $w\in V_2$ in the notation of Lemma~3.8.
For each $i=1,\ldots,r$, add the elements $u_{v,e,i}=x_v - NN_{e,i}c_2$ and $u_{w,e,i}=x_w + NN_{e,i}c_1$ to $U$.

Notice that all the conditions on $A$ in the statement of the lemma are satisfied except possibly Condition~5.

Given an $\ell $-tuple $(z_1,\ldots,z_\ell )$, let $\LL(z_1,\ldots,z_\ell ) = \sum_{i=1}^\ell  c_i z_i$.
If $z\in A$, then define $\bar z = z$ to be the unique element of $A$ to which it is congruent modulo $N$.
Suppose that $(z_1,\ldots,z_\ell )$ is an $\ell $-tuple of elements of $A$ satisfying
$\LL(z_1,\ldots,z_\ell )=K$. Then $\LL(\bar z_1,\ldots,\bar z_\ell )=K \pmod N$. But $|\LL(\bar z_1,\ldots,\bar z_\ell )|<N/2$, so $\LL(\bar z_1,\ldots,\bar z_\ell )=K$ and $(\bar z_1,\ldots,\bar z_\ell )$ is a permutation of an edge solution to $\LL$. Exactly two elements of $\{z_1,\ldots,z_\ell \}$ belong to $U \cup A_V$. Suppose without loss of generality that $z_r = x_v \pmod N$ and $z_s = x_w \pmod N$, with $v\in V_1$ and $w\in V_2$ in the notation of Lemma~3.8.
Then $z_r=x_v$ and $z_s=x_w$, or $z_r=x_v-NN_{e_1,i_1}c_2$ and $z_s=x_w+NN_{e_2,i_2}c_1$. In the latter case, the choice of the integers $N_{e,i}$ ensures that $e_1=e_2=vw$ and $i_1=i_2$.
\end{proof}

\begin{thm} \label{thm:countinhom}
Let $\LL$ be a linear equation $c_1x_1+\ldots+c_\ell x_\ell =K$, where
$\ell \geq 3$ and $K$ and the coefficients $c_i$ are all non-zero. Then
$\#\LL$-\textsc{Free Subset} is $\#P$-complete. If the coefficients of
$\LL$ are not all of the same sign, then the input set $A$ can be
restricted to be a subset of $\nats$.
\end{thm}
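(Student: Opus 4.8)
The plan is to follow the proof of Theorem~\ref{thm:counthom} almost verbatim, with the inhomogeneous construction of Lemma~\ref{lem:counting3.2inhom} playing the role that Lemma~\ref{lem:counting3.2} plays there. The one structural difference is that Lemma~\ref{lem:counting3.2inhom} only accepts \emph{bipartite} graphs (it invokes Lemma~\ref{construct-set-inh} with $V_3=\emptyset$), so the reduction must start from the problem of counting independent sets in a bipartite graph. Membership of $\#\LL$-\textsc{Free Subset} in $\#P$ is routine, since $\LL$-freeness of a subset can be tested in polynomial time (there are no trivial solutions as $K\ne 0$). Given a bipartite instance $G=(V,E)$ with $m=|E|$ and $n=|V|$, I would let $r$ be the least integer exceeding $(n+(\ell-2)m)\tfrac{\ln 2}{\ln(4/3)}$ and apply Lemma~\ref{lem:counting3.2inhom} to build $A=A_V\cup A_E\cup U$, where $A_E=A_E^0\cup\cdots\cup A_E^{\ell-3}$.

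The counting step is identical in spirit to Theorem~\ref{thm:counthom}, the only bookkeeping change being that each edge now carries the $\ell-2$ labels $y_e^0,\dots,y_e^{\ell-3}$ instead of a single label. Call an edge $e$ \emph{full} in a set $S\subseteq A_V\cup A_E$ if all of $y_e^0,\dots,y_e^{\ell-3}$ belong to $S$. By Condition~5 of Lemma~\ref{lem:counting3.2inhom}, every solution to $\LL$ in $A$ is either an edge solution (on the vertex labels) or a $U$-edge solution $(u_{v,e,i},u_{w,e,i},y_e^1,\dots,y_e^{\ell-3},y_e^0)$; in particular a single element of $U$ never lies in any solution without its partner. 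Hence, for a fixed $\LL$-free $S\subseteq A_V\cup A_E$, an extension by a subset of $U$ remains $\LL$-free exactly when, for every \emph{full} edge $e$ and every $i$, we avoid adding both $u_{v,e,i}$ and $u_{w,e,i}$ (a non-full edge imposes no constraint, since its labels are not all present). Thus each full edge contributes a factor of $3^r$ and each non-full edge a factor of $4^r$, so a set $S$ with $f$ full edges has exactly $3^{rf}4^{r(m-f)}$ valid extensions.

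Writing $z_t$ for the number of $\LL$-free $S\subseteq A_V\cup A_E$ with exactly $m-t$ full edges, the total number of $\LL$-free subsets of $A$ is $\sum_{t=0}^m 3^{r(m-t)}4^{rt}z_t=3^{rm}\sum_{t=0}^m (4/3)^{rt}z_t$. Since $\sum_t z_t\le 2^{|A_V\cup A_E|}=2^{\,n+(\ell-2)m}$, the choice of $r$ ensures $(4/3)^r>\sum_t z_t$, so Lemma~\ref{lem:easyNT} with $p=(4/3)^r$ extracts every $z_t$ — in particular $z_0$ — from this single oracle value. Finally $z_0$ counts the $\LL$-free sets in which every edge is full, i.e.\ those with $A_E\subseteq S$; such an $S$ is $\LL$-free precisely when $S\cap A_V$ contains no edge solution, that is, when $S\cap A_V$ is independent in $G$. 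Hence $z_0$ equals the number of independent sets of $G$, which completes the reduction.

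The main obstacle is not the counting, which transcribes Theorem~\ref{thm:counthom} directly, but the fact that the source problem must be counting independent sets in a \emph{bipartite} graph rather than a general one (as used for the homogeneous theorems via $\#$\textsc{Monotone 2-SAT}). I would rely on the classical $\#P$-completeness of counting independent sets in bipartite graphs. The remaining point to verify carefully is that Condition~5 genuinely forbids any solution containing a lone element of $U$, as this is exactly what justifies the clean $3$-versus-$4$ factor; pleasingly, the inhomogeneity of $\LL$ simplifies the analysis compared with the homogeneous case, since it eliminates trivial solutions altogether.
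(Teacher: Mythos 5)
Your proposal is correct and follows exactly the route the paper takes: the paper's own proof is a two-line sketch saying to repeat the argument of Theorem~\ref{thm:counthom} with Lemma~\ref{lem:counting3.2inhom} in place of Lemma~\ref{lem:counting3.2}, reducing from $\#$\textsc{Independent Set} on bipartite graphs (citing Provan and Ball for its $\#P$-hardness). Your filled-in details — the adjusted choice of $r$ to account for $|A_V \cup A_E| = n + (\ell-2)m$, the ``full edge'' bookkeeping justifying the $3^{rf}4^{r(m-f)}$ extension count via Condition~5 of Lemma~\ref{lem:counting3.2inhom}, and the extraction of $z_0$ via Lemma~\ref{lem:easyNT} — are all accurate.
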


\begin{proof}
The proof is very similar to that of Theorem~\ref{thm:counthom},
except that this time we apply Lemma~\ref{lem:counting3.2inhom} and
use the fact that $\#$\textsc{Independent Set} remains
$\#$P-hard when the input is restricted to being a bipartite
graph~\cite{ProvanBall}.
\end{proof}

Combining Theorems~\ref{thm:counthom}, \ref{thm:counthomlge4} and
\ref{thm:countinhom} gives the following:

\begin{thm} \label{thm:countcombined}
Let $\LL$ be a linear equation $c_1x_1+\ldots+c_\ell x_\ell =K$, where
$\ell \geq 3$ and the coefficients $c_i$ are all non-zero. Then
$\#\LL$-\textsc{Free Subset} is $\#P$-complete. If the coefficients of
$\LL$ are not all of the same sign, then the input set $A$ can be
restricted to be a subset of $\nats$.
\end{thm}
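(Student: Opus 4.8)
The plan is to obtain the combined result by a straightforward case analysis, dispatching each equation to whichever of the three counting theorems already proved covers it. I would first record that $\#P$-completeness has two halves, membership and hardness, and that membership holds uniformly: for fixed $\ell$ one can test whether a given subset $A' \subseteq A$ is $\LL$-free in time $\mathcal{O}(|A'|^{\ell})$ by inspecting every $\ell$-tuple of its elements, so $\#\LL$-\textsc{Free Subset} counts the witnesses of a polynomial-time-decidable relation and hence lies in $\#P$. This is in any case subsumed by each component theorem, since each already asserts completeness, not merely hardness.

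For hardness I would split on whether $\LL$ is homogeneous. If $K = 0$ the equation is homogeneous, and I would split further on the number of variables: when $\ell = 3$, Theorem~\ref{thm:counthom} gives $\#P$-completeness directly, and when $\ell \ge 4$, Theorem~\ref{thm:counthomlge4} does. If $K \ne 0$ the equation is inhomogeneous with $\ell \ge 3$, so Theorem~\ref{thm:countinhom} applies. Every equation with $\ell \ge 3$ and all $c_i$ non-zero falls into exactly one of these three cases, so the cases are exhaustive and the first assertion of the theorem follows.

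Each of the three cited theorems also carries the sharper conclusion that, when $c_1, \ldots, c_{\ell}$ are not all of the same sign, the constructed input set $A$ may be taken to be a subset of $\nats$. Since this clause appears identically in all three statements, it transfers verbatim to the combined statement, yielding the second assertion.

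The step I expect to need the most care is the inhomogeneous case, where invoking Theorem~\ref{thm:countinhom} (and through it Lemma~\ref{lem:counting3.2inhom}) tacitly assumes that $\gcd(c_1, \ldots, c_{\ell})$ divides $K$. When that divisibility fails, $\LL$ has no integer solution at all, so every subset of $A$ is $\LL$-free and the answer is simply $2^{|A|}$, which is computable in polynomial time; the problem is then trivial rather than $\#P$-complete. The completeness claim is therefore understood to hold in the nondegenerate regime where $\LL$ actually admits solutions, which is precisely the content of the three component theorems, and in that regime the case analysis above delivers the result with no further work.
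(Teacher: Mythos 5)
Your proposal is correct and takes essentially the same route as the paper, which gives no separate argument for Theorem~\ref{thm:countcombined} beyond the sentence ``Combining Theorems~\ref{thm:counthom}, \ref{thm:counthomlge4} and \ref{thm:countinhom} gives the following'' --- precisely your three-way case split on $K=0$ versus $K\neq 0$ and $\ell=3$ versus $\ell\geq 4$, with the $\nats$-restriction clause transferring verbatim from each component theorem. Your closing caveat is a genuine point of care the paper glosses over: Theorem~\ref{thm:countinhom} is stated without the hypothesis that $\gcd(c_1,\ldots,c_\ell)$ divides $K$, even though its proof via Lemma~\ref{lem:counting3.2inhom} needs it, and when that divisibility fails every subset of $A$ is $\LL$-free, the answer is $2^{|A|}$ computable in polynomial time, and completeness cannot hold (unless $\#P$ collapses to $\mathsf{FP}$), so reading the combined theorem in the nondegenerate regime, as you do, is the right repair.
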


%
%
%
%

\end{document}